\def\a{\alpha}
\def\b{\beta}
\def\ga{\gamma}
\def\Ga{\Gamma}
\def\de{\delta}
\def\De{\Delta}
\def\ep{\epsilon}
\def\la{\lambda}
\def\si{\sigma}
\def\om{\omega}
\def\Om{\Omega}
\def\th{\theta}
\def\nab{\nabla}
\def\varep{\varepsilon}
\def\BB{{\cal B}}
\def\DD{{\cal D}}
\def\JJ{{\cal J}}
\def\DD{{\cal D}}
\def\RR{{\cal R}}
\newcommand{\N}[0]{\mathbb{N}}
\newcommand{\F}[0]{\mathbb{F}}
\newcommand{\R}[0]{\mathbb{R}}
\newcommand{\Z}[0]{\mathbb{Z}}
\newcommand{\T}[0]{\mathbb{T}}
\newcommand{\supp}{\mathrm{supp} \,}
\newcommand{\suppt}{\mathrm{supp}_t \,}
\newcommand{\fr}[2]{\frac{#1}{#2}}
\newcommand{\ALI}[1]{\begin{align*} #1 \end{align*}}
\newcommand{\tx}[1]{\mbox{#1}}
\newcommand{\lsm}[0]{\lesssim}
\newcommand{\wed}[0]{\wedge}
\newcommand{\pr}[0]{\partial}
\newcommand{\nb}{\nabla}
\newcommand{\co}[1]{\|#1\|_{C^0}}
\newcommand{\cda}[1]{\|#1\|_{\dot{C}^\a}}
\newcommand{\Ddt}[0]{\overline{D}_t}
\newcommand{\hxi}[0]{\widehat{\Xi}}          %%%%%%  New commands for this paper
\newcommand{\lhxi}[0]{\log \widehat{\Xi} \,}
\newcommand{\nhat}[0]{\widehat{N}}
\newcommand{\plhxi}[0]{(\log \widehat{\Xi})}
\newcommand{\va}[0]{\vec{a}}
\newcommand{\vcb}[0]{\vec{b}}
\DeclareMathAlphabet{\mathpzc}{OT1}{pzc}{m}{it}
\newcommand{\hh}[0]{\mathpzc{h}}
\newcommand{\bp}[0]{\bar{p}}
\newcommand{\hc}[0]{\widehat{C}}
\newcommand{\brk}[0]{(\bar{k})}
\newcommand{\bk}[0]{\bar{k}}
\newcommand{\VR}[0]{\mathring{V}}
\newcommand{\wOm}[0]{\widetilde{\Omega}}
\newcommand{\ever}[0]{\left( \fr{e_v}{e_R} \right)}
\newcommand{\wtld}[1]{\widetilde{#1}}
\newcommand{\ali}[1]{ \begin{align} #1 \end{align} }
\def\XXint#1#2#3{{\setbox0=\hbox{$#1{#2#3}{\int}$}
     \vcenter{\hbox{$#2#3$}}\kern-.5\wd0}}
\newcommand{\LCyl}{\hat{\Gamma}}
\newcommand{\ECyl}{\hat{C}}
\newtheorem{thm}{Theorem}%[section]
\newtheorem{lem}{Lemma}[section]
\newtheorem{prop}{Proposition}[section]
\theoremstyle{definition}
\newtheorem{defn}{Definition}[section]
\theoremstyle{remark}
\title{ On the Endpoint Regularity in Onsager's Conjecture }
\author{ Philip Isett\thanks{Department of Mathematics, University of Texas at Austin, Austin, TX  (\href{mailto:isett@math.utexas.edu}{isett@math.utexas.edu}) The work of P. Isett is supported by the National Science Foundation under Award No. DMS-1402370.} 
}
\date{ }
\begin{document}
\maketitle

\begin{abstract}
Onsager's conjecture states that the conservation of energy may fail for $3D$ incompressible Euler flows with H\"{o}lder regularity below $1/3$.  This conjecture was recently solved by the author, yet the endpoint case remains an interesting open question with further connections to turbulence theory.  In this work, we construct energy non-conserving solutions to the $3D$ incompressible Euler equations with space-time H\"{o}lder regularity converging to the critical exponent at small spatial scales and containing the entire range of exponents $[0,1/3)$.  

Our construction improves the author's previous result towards the endpoint case.  To obtain this improvement, we introduce a new method for optimizing the regularity that can be achieved by a convex integration scheme.  
A crucial point is to avoid power-losses in frequency in the estimates of the iteration.  This goal is achieved using localization techniques of \cite{IOnonpd} to modify the convex integration scheme.

We also prove results on general solutions at the critical regularity that may not conserve energy.  These include a theorem on intermittency stating roughly that energy dissipating solutions cannot have absolute structure functions satisfying the Kolmogorov-Obukhov scaling for any $p > 3$ if their singular supports have space-time Lebesgue measure zero.

%lie in a critical Besov space of integrability exponent greater than three if their singularities have space-time Lebesgue measure zero.
%the fact that singularites of positive space-time Lebesgue measure are necessary for any energy non-conserving solution to exist while having critical regularity of an integrability exponent greater than three.
%any solution with critical regularity at an integrability exponent greater than three that does not conserve energy must have singularities of positive Lebesgue measure in space and time.

\end{abstract}

\section{Introduction}
%Discuss endpoint case.  Cheskidov \cite{chesShvIllp} .

In this paper we consider the endpoint regularity in Onsager's conjecture for the incompressible Euler equations on $\R \times \T^3$, which we write in conservation form as
\ali{
\label{eq:eulerEqns} \tag{E}
\begin{split} 
\pr_t v^\ell + \nb_j(v^j v^\ell) + \nb^\ell p &= 0 \\
\nb_j v^j &= 0,
\end{split}
}
using the summation convention for summing repeated indices.  We are concerned mainly with weak solutions to the incompressible Euler equations, which are defined most generally as a locally square-integrable vector field $v$ (called the velocity field) and scalar function $p$ (called the pressure) that together satisfy \eqref{eq:eulerEqns} in the sense of distributions.

Onsager's conjecture states that for any H\"{o}lder exponent $\a < 1/3$ there exist periodic weak solutions to the $3D$ incompressible Euler equations
that belong to the H\"{o}lder class $v \in L_t^\infty C_x^\a$ and fail to conserve the total kinetic energy $\fr{1}{2}\int_{\T^3} |v(t,x)|^2 dx$.  The endpoint case of the conjecture is that the same statement should hold for $\a = 1/3$.  The above statements originate from Onsager's paper \cite{onsag} on the statistical theory of hydrodynamic turbulence, where Onsager postulated that dissipation of energy may occur in the absence of viscosity\footnote{A related and important open question is whether such energy dissipating solutions arise as zero viscosity limits of solutions to the Navier-Stokes equations.} 
through the mechanism of an energy cascade modeled by the incompressible Euler equations.  

Onsager's argument predicts that such energy dissipation should be possible for incompressible Euler flows with regularity exactly $1/3$.  Specifically, Onsager argued that the energy cascade occuring in a turbulent flow will result in an energy spectrum with a statistical power law consistent with exactly the (Besov or H\"{o}lder) regularity $1/3$ in the inertial range of frequencies, which agrees with the scaling laws of turbulence predicted by Kolmogorov's 1941 theory \cite{K41}.  (See also \cite{eyinkSreen, deLSzeOnsagSurv} for more detailed reviews of these statements and computations.)  On the other hand, Onsager asserted that conservation of energy must hold for every incompressible Euler flow $v \in L_t^\infty C_x^\a(I \times \T^3)$ with H\"{o}lder regularity $\a$ strictly above $1/3$.  A strengthening of this latter assertion was proved in \cite{CET} after initial work of \cite{eyink}, with the sharpest known result being that conservation of energy holds for weak solutions in the Besov class $v \in L_t^3 B_{3,c_0(\N)}^{1/3}$ \cite{ches}.  These results leave open the possibility that energy dissipation as considered by Onsager may be possible for solutions to incompressible Euler with exactly the critical regularity $1/3$ (e.g. for weak solutions in the class $v \in C_t C_x^{1/3}$), while the construction in \cite{eyink} of initial data with critical regularity and nonzero energy flux provides further evidence that dissipation of energy for weak solutions at the critical regularity should indeed exist.%, and further motivation to study the endpoint case of the conjecture.  

Recently, the existence of weak solutions to incompressible Euler in the class $v \in L_t^\infty C_x^\a(\R \times \T^3)$ that fail to conserve energy has been established by the author for all $\a < 1/3$ in \cite{isettOnsag}.  The solutions are constructed using the method of convex integration, which was first introduced to the incompressible Euler equations by De Lellis and Sz\'{e}kelyhidi \cite{deLSzeIncl,deLSzeCts,deLSzeHoldCts} and was further developed towards improved partial results towards Onsager's conjecture in \cite{isett, buckDeLIsettSze, buckDeLSzeOnsCrit}.  The proof in \cite{isettOnsag} relies also on the use of Mikado flows introduced in \cite{danSze} to implement convex integration in combination with a new ``gluing approximation'' technique.

%The regularity achieved by the proof in \cite{isettOnsag} is limited to $\a = 1/3 - \ep$ strictly below the limiting exponent $1/3$, and does not include the endpoint or provide solutions with borderline regularity $v \in \bigcap_{\a < 1/3} L_t^\infty C_x^\a$.  These limitations are common to all previous constructions of solutions to nonlinear PDE using the method of convex integration. %(including \cite{borisov2,deLSzeC1iso, deLSzeHoldCts, isett, IOnonpd, isettVicol, }). %, including applications to isometric immersions \cite{borisov2,deLSzeC1iso,}, the Monge-Ampere equations as well as earlier works on incompressible Euler and other incompressible fluid equations.

In the present work, we improve upon the result in \cite{isettOnsag} to construct solutions with borderline regularity that approaches the endpoint case at small length scales while failing to conserve energy.  Our main result is the following.
\begin{thm} \label{thm:mainThm} There exists $(v,p)$ a weak solution to the incompressible Euler equations that has non-empty, compact support in time on $\R \times \T^3$ and belongs to the class $v \in \bigcap_{\a < 1/3} C_{t,x}^\a$.  Moreover, one may arrange that $v$ also satisfies an estimate of the form
\ali{
|v(t,x + \De x) - v(t,x)| &\leq C |\De x|^{\fr{1}{3} - B \sqrt{\fr{\log\log |\De x|^{-1}}{\log |\De x|^{-1}} }} \label{eq:loglogborderbd}
}
for some constants $C$ and $B$ and for all $(t,x) \in \R \times \T^3$ and all $|\De x| \leq 10^{-2}$.  %In particular, the $v$ above belongs to the class $\bigcap_{\a < 1/3} C_{t,x}^\a$.
\end{thm}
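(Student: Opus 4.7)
The plan is to realize $v$ as the limit of a convex integration iteration $(v_q, p_q, R_q)$ solving the Euler--Reynolds system, following the Mikado-flow-based scheme with the gluing approximation of \cite{isettOnsag}, but engineered so that the effective H\"older exponent approaches $1/3$ at small scales. At each stage one carries $v_q$ with characteristic frequency $\Xi_q$ and velocity amplitude $e_{v,q}^{1/2}$, together with a symmetric stress $R_q$ of amplitude $e_{R,q}$ encoding the error from \eqref{eq:eulerEqns}; the correction $V_{q+1} = v_{q+1} - v_q$ is assembled from Mikado flows oscillating at a high frequency $\Xi_{q+1}$ with amplitude $\sim e_{R,q}^{1/2}$ whose quadratic self-interaction cancels $R_q$. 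The H\"older regularity contributed at scale $\Xi_{q+1}^{-1}$ is $\alpha_q := \log(e_{v,q+1}^{-1/2})/\log(\Xi_{q+1})$, so to drive $\alpha_q \to 1/3$ one arranges $e_{v,q+1} \sim \Xi_{q+1}^{-2/3 + \varepsilon_q}$ with $\varepsilon_q \to 0$ at a prescribed rate. Compact temporal support is retained by cutting off the initial stress and the amplitude functions in time, exactly as in \cite{isettOnsag}.

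The key new idea is to let the frequency ratios $\Xi_{q+1}/\Xi_q$ grow only mildly --- like a small power of $\log \Xi_q$ rather than a fixed power of $\Xi_q$ --- so that $\alpha_q$ creeps upward at every step. This is compatible with the iteration only if one avoids power losses of the form $\Xi_{q+1}^\delta$ in each stress estimate: even a single such loss per stage would accumulate to a geometric obstruction preventing $\alpha_q$ from reaching $1/3$. To remove these losses I would invoke the localization techniques of \cite{IOnonpd}, decomposing $R_q$ into a collection of pieces whose cardinality grows only polylogarithmically in $\Xi_{q+1}$ and whose space-time supports and frequency content are both sharply localized, so that the nonlocal operators (notably the inverse divergence used to build $R_{q+1}$) act on compactly supported data and introduce only polylogarithmic factors $(\log \Xi_{q+1})^C$ in the new stress bounds, rather than powers. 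Balancing these factors against a carefully tuned schedule $\varepsilon_q \sim \sqrt{\log\log \Xi_{q+1}/\log \Xi_{q+1}}$ then yields the bound \eqref{eq:loglogborderbd} after summing the telescoping series for $v = v_0 + \sum_q V_{q+1}$.

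The main obstacle will be establishing polylogarithmic --- rather than polynomial --- bounds for every contribution to the new stress $R_{q+1}$: transport, flow, quadratic high-high-to-low, and mollification errors. The quadratic piece is still cancelled by Mikado geometry as in \cite{isettOnsag}, but the transport and flow errors, which involve commutators between the coarse flow and the high-frequency amplitudes as well as Lie-derivative corrections, demand the sharp localization in both space and time together with microlocal bookkeeping of which inverse-divergence symbol acts on which piece. A secondary technical point is to track the accumulation of constants across infinitely many stages: the rate $\varepsilon_q$ must be slow enough to absorb all logarithmic factors, yet fast enough that $\Xi_q \to \infty$ still yields $v \in C_{t,x}^\alpha$ for every $\alpha < 1/3$. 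Once these estimates are in place, non-triviality of the energy, compact support in time, and the remaining assertions of Theorem \ref{thm:mainThm} follow by routine modifications of the constructions in \cite{isettOnsag}.
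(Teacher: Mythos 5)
Your high-level strategy is the paper's: convex integration with Mikado flows and the gluing approximation of \cite{isettOnsag}, using the localization machinery of \cite{IOnonpd} to eliminate power losses in frequency, iterated with a subpolynomial frequency schedule so the effective exponent crawls up to $1/3$. You also correctly anticipate that the removal of power losses is the crux and that the final deficit $\varepsilon$ at scale $\Xi^{-1}$ should come out as $\sqrt{\log\log\Xi/\log\Xi}$. But two of the central quantitative ingredients in your plan are wrong, and a third, genuinely new, ingredient in the paper is missing.

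First, the frequency schedule. You propose $\Xi_{q+1}/\Xi_q \sim (\log\Xi_q)^C$. That is much too slow. With polylogarithmic ratios one has $L_{q+1} - L_q \sim \log L_q$ where $L_q = \log\Xi_q$, so about $q \sim L/\log L$ stages are needed to reach scale $\Xi^{-1}$; since each stage costs at least a factor $\log\log\Xi$ (from the gluing) in the stress bound, the accumulated logarithmic loss is $\sim (L/\log L)\cdot\log L = L = \log\Xi$, which after dividing by $\log|\De x|^{-1} \sim L$ leaves an $O(1)$ deficit — no improvement at all over the fixed-exponent result. The paper's optimization (the balance in \eqref{eq:balanceThese}, which weighs the last-step frequency jump $\tfrac13\de_{(k)}\log\hxi_{(k)}$ against the accumulated losses $\sum_{j<k}(\log\log\hxi_{(j)}+\log\hc)$) yields $g_{(k)} = e^{\ga k\log k}$ and $\log\hxi_{(k)}\sim k^2\log k$, so the frequency ratio is $N_{(k)}\sim e^{c\sqrt{\log\Xi\,\log\log\Xi}}$ — superpolylogarithmic but subpolynomial. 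This balance, together with the key observation \eqref{eq:keyEvolRule} that $\de_{(k)}\bigl(\tfrac13\log\hxi_{(k)}+\tfrac12\log e_{R,(k)}\bigr)$ depends only on $\log\log\hxi_{(k)}$ and not on $g_{(k)}$, is what isolates $1/3$ as the limit and produces the $\sqrt{\log\log/\log}$ rate; your proposal does not supply a mechanism for deriving it.

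Second, the localization. You describe decomposing $R_q$ into pieces "whose cardinality grows only polylogarithmically in $\Xi_{q+1}$." In the paper (Section~\ref{sec:modifyConvexInt}) the waves are indexed by $\sim\Pi^3\sim\Xi^3$ spatial cells plus time intervals plus directions — polynomially many, not polylogarithmically many. The point is not a small count but a \emph{bounded-overlap} cover: each wave is supported in a Lagrangian/Eulerian cylinder of size $\Xi^{-1}\times\th$, only $O(1)$ cylinders meet any space-time point, and each wave is put in double-divergence form so that both linear and angular momentum vanish. This is precisely what permits use of the compactly supported, \emph{zeroth-order-lossless} inverses for the symmetric divergence equation from \cite{IOnonpd}, instead of a global operator that loses a power. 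Finally, you should be explicit that all of this is packaged as a new Main Lemma (Lemma~\ref{lem:mainLem}) in which the hypothesis $N\geq\Xi^\eta$ of \cite{isettOnsag} has been removed with no dependence of the constants on $\eta$; that removal is what makes a subpolynomial choice of $N$ admissible in the first place.
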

The theorem is significant for the following reasons.
\begin{itemize}
\item Theorem~\ref{thm:mainThm} demonstrates how close the method of convex integration can come to achieving the self-similar $L_t^\infty C_x^{1/3}$ regularity that corresponds to the Kolmogorov theory.
\item The theorem is the first result proved by convex integration that approaches the endpoint regularity and avoids the strictly positive gap in regularity from the endpoint faced by previous results.  In particular, we have that $v \in \bigcap_{\a < 1/3} C_{t,x}^\a$ rather than having regularity bounded strictly below the limiting exponent (i.e. $v \in C_{t,x}^{1/3 - \ep}$ for some $\ep > 0$).
\item The proof of Theorem~\ref{thm:mainThm} is based on a new algorithm that optimizes the regularity coming from a convex integration construction.  This algorithm also apparently identifies an evident barrier towards achieving the endpoint regularity exactly using the convex integration method.
\item The proof of Theorem~\ref{thm:mainThm} clarifies which techniques in the literature yield the sharpest regularity.
\end{itemize}
The constant $B$, which determines\footnote{Note that changing the value of $B$ in \eqref{eq:loglogborderbd} corresponds to an inequivalent norm.} the rate at which the regularity $1/3$ is approached at small scales, can be taken to be  $B = 2 \sqrt{2/3} + o(1)$, and this bound can be improved to $B = 4/3 + o(1)$ by combining our methods with the approach to the gluing approximation taken in \cite{BDLSVonsag} (see Sections~\ref{sec:iterateMainLem}-\ref{sec:improveBorderline} below).  For comparison, note that inequality \eqref{eq:loglogborderbd} with $O(\sqrt{\fr{\log\log |\De x|^{-1} }{\log |\De x|^{-1}}})$ replaced by $O(\fr{1}{\log |\De x|^{-1}})$ would correspond to exactly the endpoint regularity $L_t^\infty C_x^{1/3}$. 

The algorithm we develop to prove Theorem~\ref{thm:mainThm}, presented in Sections~\ref{sec:iterateMainLem}-\ref{sec:improveBorderline}, is the main novelty of our paper relating to the construction of solutions.  Later on we will discuss theorems that elaborate a general theory of endpoint solutions.  We expect that our algorithm can be adapted to give similar borderline regularity results in any known convex integration construction of H\"{o}lder continuous solutions in which power-losses of frequency in the estimates can be avoided.  In particular, %it should be possible to establish borderline regularity 
the method is likely to generalize 
to isometric embeddings as in \cite{deLSzeC1iso} (but not \cite{deLInSze}), to nondegenerate active scalar equations \cite{isettVicol}, to the $2D$ Monge-Amp{\`e}re equation \cite{lewPak2015convex}, and to the SQG equation \cite{buckShkVicSQG}, 
%for isometric embeddings as in \cite{deLSzeC1iso} (but not \cite{deLInSze}), for nondegenerate active scalar equations \cite{isettVicol}, for the $2D$ Monge-Amp{\`e}re equation \cite{lewPak2015convex}, and for the SQG equation \cite{buckShkVicSQG}, 
and in these cases the $\log \log |\De x|^{-1}$ term appearing in \eqref{eq:loglogborderbd} should be replaced by a large constant.  It is hopeful that our algorithm for optimizing the regularity may also be useful for potential applications to simulating convex integration solutions.

To achieve solutions with borderline regularity, it is necessary that the proof avoids losses of powers of the frequency in the estimates of the iteration scheme.  An important point in this regard is that the approach to the gluing construction taken in \cite{isettOnsag} obtains estimates that lose only a power of the logarithm of the frequency.  These estimates require extending the timescale of the gluing beyond the standard timescale in the local existence theory for incompressible Euler, which would be inversely proportional to some $C^\a$ norm of the initial velocity gradient. (We note in contrast that the approach taken in \cite{BDLSVonsag} leads to power-losses in the frequency at several points in the proof.  These occur both in the gluing and convex integration in parts of the proof where local well-posedness theory, Schauder estimates and Calder\'{o}n-Zygmund commutator estimates are employed.)  Still there is one point in the proof in \cite{isettOnsag}, which occurs during the convex integration step, where one encounters a power-loss in frequency, and it is necessary to modify the convex integration part of the proof to obtain our borderline result.

To avoid this power-loss, we adapt the strategy of \cite{IOnonpd} for localizing the convex integration method, which relies on two main modifications to the construction to gain the necessary estimate.  The first point is to modify the construction using waves that are localized to small length scales and are each forced to obey the conservation of angular momentum in addition to the conservation of linear momentum.  The second point is to make use of the family of operators developed in \cite{IOnonpd} that give compactly supported, symmetric solutions to the divergence equation when the necessary conditions for solving the symmetric divergence equation are satisfied.  In combination, these modifications  allow one to avoid the power-loss in frequency that had been present in \cite{isett} while enabling the authors to extend previous work of \cite{isett} on $(1/5-\ep)$-H\"{o}lder Euler flows to the nonperiodic setting of $\R \times \R^3$.  Here we adapt these ideas to the present scheme to achieve an analogous improvement in our bounds.  We note that it is important for this gain that we rely on the approach to the nonstationary phase estimate based on a parametrix and nonlinear phase functions introduced in \cite{isett}.
%We note that a different approach to avoiding power-loss in frequency using frequency-localized waves is taken in \cite{isettVicol, buckShkVicSQG}; however, this technique is not available in our setting.

%Obtaining the endpoint case of Onsager's conjecture will require new ideas.  There may be potential for convex integration to address the endpoint if something sufficiently close to an ``ideal'' Main Lemma can be proven where one has neither logarithmic nor power-losses in the frequency and the constant in the frequency growth is equal to $\hc = 1$ (as remarked in \cite{IOnonpd}).  
%Further new ideas will be required to obtain the endpoint case of Onsager's conjecture.  
Obtaining the endpoint case of Onsager's conjecture will require further new ideas, and it is of interest to study the behavior of potential energy non-conserving solutions with endpoint regularity and possible approaches to constructing them.  A convex integration approach to the endpoint regularity would be possible if something sufficiently close to an ``ideal'' Main Lemma can be proven where one has neither logarithmic nor power-losses in the frequency and the constant in the frequency growth is equal to $\hc = 1$ (as in a remark of \cite{IOnonpd}) or approaches $\hc = 1$ asymptotically at a suitable rate that can be gleaned from examining our algorithm.  %Such a construction does not presently seem within reach; however, it may be considered favorable that convex integration constructions are able in general to yield solutions whose singularities occupy regions of space with positive volume.  
Such a construction appears to be presently out of reach; however, it may be considered favorable that convex integration constructions are able in general to yield solutions whose singularities occupy regions of space with positive volume.  As the following theorem demonstrates, singularities with positive Lebesgue measure are necessary for any energy non-conserving solution with critical regularity to exist provided the integrability exponent for this regularity is greater than $3$. 
\begin{thm}[Intermittency Theorem] \label{prop:singPos} A weak solution $(v,p)$ to incompressible Euler on $I \times \T^d$ or $I \times \R^d$ that dissipates or otherwise fails to conserve energy cannot belong to an endpoint class $v \in  L_t^r B_{r,\infty}^{1/3} \cap L_{t,x}^2$ with an integrability exponent $r > 3$ if its singular support has space-time Lebesgue measure zero.
\end{thm}
Theorem~\ref{prop:singPos} has a special significance in terms of intermittent scaling exponents in turbulence.  The K41 theory \cite{K41} predicts a scaling law of the form $\langle |v(x+\De x) - v(x)|^p \rangle^{1/p} \sim |\De x|^{1/3}$ for absolute structure functions (the Kolmogorov-Obukhov law), which mathematically corresponds to $B_{p,\infty}^{1/3}$ control of the velocity field.  The idea that ``intermittency'' (deviations from self-similarity and homogeneity) in the energy dissipation and singular structure of turbulence can lead to the failure of this scaling law for $p \neq 3$ was first attributed to Landau by Kolmogorov in the 1940's (see \cite[Section 5]{frisch1991global}).  Moreover, experimental studies have found evidence of such intermittency in the energy dissipation of turbulent flows accompanied by deviations from the Kolmogorov-Obukhov law arising from a multifractal structure \cite{meneveau1987multifractal,meneveau1990joint,meneveau1991multifractal}.  Theorem~\ref{prop:singPos} and its proof provide a rigorous sense in which lower dimensional singularities or energy dissipation in fact logically imply deviations from the Kolmogorov-Obukhov law, thus reinforcing the experimental findings.  %It would be quite interesting to give quantitative improvements of this result.

\begin{comment}
\begin{thm} \label{prop:singPos}  Any weak solution $(v,p)$ to incompressible Euler on $I \times \T^d$ or $I \times \R^d$ that does not conserve energy and belongs to the endpoint class $v \in  L_t^r B_{r,\infty}^{1/3} \cap L_{t,x}^2$ for some $r > 3$ must be singular on a subset of space-time with strictly positive $(d+1)$-dimensional Lebesgue measure.
\end{thm}
\end{comment}
%\noindent  
Theorem~\ref{prop:singPos} is a consequence of two facts that are also new remarks in the literature, which are a local version of the sharp energy conservation criterion in \cite{ches} and a result on integrability of the energy dissipation measure (see Theorems~\ref{prop:locOnsSingCrit} and \ref{prop:endIntDissMsr} below).  One would most likely expect that energy non-conserving solutions exist for the entire spectrum of endpoint spaces above, including the endpoint case of $L_t^\infty C_x^{1/3}$.  
For a more precise formulation of Theorem~\ref{prop:singPos} we refer to Section~\ref{sec:critSing}.  We also note the works of \cite{chesShv, luoShv2dHomog,luoShv2DhomogAdd, shvHomog} for further mathematical results related to intermittency.

\begin{comment}
Phrased in terms of intermittency, Theorem~\ref{prop:singPos} shows that the absolute structure functions of a dissipative Euler flow must exhibit intermittent scaling if the Euler flow dissipates energy and has singularities with zero space-time Lebesgue measure.  Th

	The result of Theorem~\ref{prop:singPos} shows a necessary complexity of singularities for solutions in suitable endpoint classes to have non-constant energy and helps elaborate the significance of the integrability exponent for Onsager critical function spaces.  A corollary is that any energy non-conservative solution in the above endpoint classes cannot have singularities restricted to a lower-dimensional subset of space-time.  Consequently any solution with lower dimensional singularities that does not conserve energy cannot have critical Besov regularity $L_t^r B_{r,\infty}^{1/3}$ of any integrability exponent $r$ greater than three.
%	that any energy non-conserving solution with only singularities of lower dimension  cannot have regularity $L_t^r B_{r,\infty}^{1/3}$ for any integrability exponent greater than three.  
This statement may be regarded as having some interest in connection to the theory of intermittency in turbulence.  For further related results and discussion see \cite{chesShv, luoShv2dHomog,luoShv2DhomogAdd, shvHomog}.
\end{comment}

%Finally, we state the following result, which demonstrates a necessary complexity of singularities for solutions in suitable endpoint classes and helps explain the significance of integrability for Onsager critical function spaces.  The Theorem is of philosophical importance for our main results as the approach we pursue does produce solutions whose singularities have positive Lebesgue measure.

In addition to having the endpoint regularity, Onsager's paper \cite{onsag} describes Euler flows that furthermore have decreasing kinetic energy.  Related to this point, we state the following Theorem.%, which also pertains to stability of energy dissipation within endpoint classes.

%Onsager's paper \cite{onsag} describes Euler flows that should dissipate kinetic energy in addition to having the critical regularity.  Related to this point, we make the following observation, which we state as a Theorem.

%In addition to having the endpoint regularity, the Euler flows described in Onsager's paper \cite{onsag} should dissipate kinetic energy in addition to having the critical regularity.  Related to this point, we make the following observation, which we state as a Theorem.
\begin{thm} \label{prop:energyc1} If $(v, p)$ are a weak solution to \eqref{eq:eulerEqns} on $I \times \T^d$, $d \geq 2$ with $v \in C_t C_x^{1/3}$ (or more generally with $v \in C_t B_{3,\infty}^{1/3}$) then the total kinetic energy $e(t) = \int_{\T^d} \fr{|v(t,x)|^2}{2} dx$ is $C^1$ in time.
\end{thm}
Theorem~\ref{prop:energyc1} implies that the task of finding an energy dissipating solution in the class $v \in C_t C_x^{1/3}$ can be reduced to finding any example of a solution in this class that fails to satisfy energy conservation.  Such a solution would have total kinetic energy that is either strictly increasing or strictly decreasing on some open interval of time.  After possibly reversing time one obtains a solution with a decreasing energy profile on an open interval.  For $\a < 1/3$, the existence of energy-dissipating solutions in $C_tC_x^\a$ was proven recently in \cite{BDLSVonsag} by introducing an additional idea in the convex integration part of the proof to prescribe the energy profile of the solutions.  We expect that this technique\footnote{A related technical point is that the approach to prescribing the energy profile in \cite{BDLSVonsag} involves requiring the stress tensor $R^{j\ell}$ to be trace-free in addition to being symmetric.  It is also possible to prescribe the energy profile without imposing the trace-free requirement on $R^{j\ell}$; see \cite{IOnonpd,IO-presEn}. } should be possible to extend to the class described by \eqref{eq:loglogborderbd} for example by modifying the statement of our Main Lemma in a way similar to the analysis in \cite{IOnonpd,IO-presEn}.

The proof of Theorem~\ref{prop:energyc1}, presented in Section~\ref{sec:regKineticCrit} below, %(expanding a remark in \cite{isett2}) 
suggests that the failure of energy conservation for solutions in the critical space $v \in C_tC_x^{1/3}$ should be very common.  % and provides a necessary condition for a given velocity field to be the initial datum of a solution in the class $v \in C_t B_{3,\infty}^{1/3}$.  
The proof reduces the existence of an energy-dissipating solution to solving the Euler equations with appropriate initial data in the desired critical space for a short time.  However, one must be cautious that the Euler equations are ill-posed in $C_t C_x^\a$ or in $C_t B_{3,\infty}^{\a}$ for all $\a < 1$ as has been shown in \cite{chesShvIllp,bardTiti}, which presents a significant difficulty for constructing solutions in these spaces.  Complementing these negative results, our proof of Theorem~\ref{prop:energyc1} yields as a byproduct a necessary condition for a given divergence free vector field to be the initial datum of a solution in the class $v \in C_t B_{3,\infty}^{1/3}$.

With regard to energy dissipation and solving the Cauchy problem, a natural question is whether there is a simple, Onsager-critical function space in which {\it local} dissipation of energy is guaranteed along with total kinetic energy dissipation if one can solve the Cauchy problem in that space with the appropriate initial data.  
% in which {\it local} dissipation of energy in addition to total kinetic energy dissipation can be guaranteed after solving the Cauchy problem for the appropriate initial data.  
A simple criterion of this type is provided in Theorem~\ref{thm:localDissstable} of Section~\ref{sec:locDissstable}.  The regularity condition imposed to maintain local dissipation in this criterion is notably stronger than that assumed to control total kinetic energy in Theorem~\ref{prop:energyc1}, as our Theorem~\ref{thm:localDissstable} involves solutions in the function space $C_t C_x^{1/3}$ rather than assuming only Besov regularity in space.  %The stronger function space $C_x^{1/3}$ is used to localize the energy dissipation criterion while maintaining homogeneity of the space.

We now summarize the organization of the paper and the proof of our borderline result, Theorem~\ref{thm:mainThm}.  The general theory of endpoint solutions, including Theorems~\ref{prop:singPos}-\ref{thm:localDissstable}, is contained in Sections~\ref{sec:regKineticCrit}-\ref{sec:locDissstable}.  
%We start by explaining the proof of Theorem~\ref{prop:energyc1} in Section~\ref{sec:regKineticCrit} and establishing Theorem~\ref{prop:singPos} and the accompanying Theorems~\ref{prop:locOnsSingCrit} and \ref{prop:endIntDissMsr} in Section~\ref{sec:critSing}.  Section~\ref{sec:locDissstable} contains the proof of Theorem~\ref{thm:localDissstable}.  
We then summarize notation for the main body of the paper in Section~\ref{sec:notation}.  Sections~\ref{sec:mainLem}-\ref{sec:solveSymmdiv} contain the Main Lemma of the paper and our modification of the convex integration construction of \cite{isettOnsag}.  These sections assume familiarity with the convex integration construction in \cite{isettOnsag}.  Section~\ref{sec:iterateMainLem} explains the proof of Theorem~\ref{thm:mainThm} using the Main Lemma, and presents our new method for optimizing the regularity in a general convex integration scheme.  Section~\ref{sec:improveBorderline} outlines how to combine our methods with the approach to the gluing approximation taken in \cite{BDLSVonsag} to improve the rate of convergence to the critical exponent in the estimate \eqref{eq:loglogborderbd}.

$ $

\noindent{\bf Acknowledgments}  The author is thankful to S.-J. Oh for helpful conversations related to the endpoint regularity.  This material is based upon work supported by the National Science Foundation under DMS-1402370, DMS-1700312 and DMS-2055019.

%Theorem~\ref{prop:energyc1} follows by observing that, as distributions in $t$ one has
%\ali{
%\fr{d}{dt} \int_{\T^d} \fr{|v(t,x)|^2}{2} dx &= \lim_{\ep \to 0} \fr{d}{dt} \int_{\T^d} \fr{|v_\ep(t,x)|^2}{2} dx
%}

  %The endpoint case would follow from an ideal Main Lemma, in which there are no logarithmic or power losses in the frequency and the implicit constant in the growth of frequency is equal to $\hc = 1$. 

%This work extended the previous construction of $1/5-\ep$-H\"{o}lder Euler flows to the nonperiodic setting of $\R \times \R^3$ by localizing the convex integration construction, and in particular introduced a new family of operators that provide compactly supported solutions to the symmetric divergence equation.

%the extent to which the endpoint case of Onsager's conjecture

% and it has been unclear whether the method of convex integration would have the potential to pushed forward to  this case.

%whether convex integration can be 

%; namely if $v(x,t) = a_k(t) e^{i k \cdot x}$ is represented in a Fourier series, one has that
%\ali{
%\sum_{\la/2 \leq |k| \leq 2 \la} |a_k|^2 \sim \la^{-2/3}
%}

% (where Onsager also asserted that if $\a > 1/3$ then every solution in the class $v \in L_t^\infty C_x^\a$ must conserve energy).

% building on a series of developments in the method of convex integration that was first introduced for the incompressible Euler equations by De Lellis and Sz\'{e}kelyhidi \cite{deLSzects, }.

\section{Regularity of kinetic energy at the critical exponent} \label{sec:regKineticCrit}
We start with a proof of Theorem~\ref{prop:energyc1} on the $C^1$ regularity of the kinetic energy profile for solutions of class $C_t B_{3,\infty}^{1/3}$.  In the next Section we prove Theorem~\ref{prop:singPos}.  We will use the summation convention for summing repeated upper and lower spatial indices, so that $v^\ell v_\ell = |v|^2$ and $\nb_\ell v^\ell = \mbox{div } v$.

The proof of Theorem~\ref{prop:energyc1} is an extension of the argument of \cite{CET} for proving energy conservation for weak solutions in the class $v \in L_t^3 B_{3,\infty}^{1/3+\ep}$ and of a remark in \cite{isett2} on the endpoint case.  Namely, suppose that $(v,p)$ is a weak solution to \eqref{eq:eulerEqns} with velocity of class $v \in C_t B_{3,\infty}^{1/3}(I \times \T^d)$, $d \geq 2$, $I$ an open interval.  Let $\eta_\ep$ be a standard mollifier in $\R^d$ at length scale $\ep$, and let $v_\ep^\ell = \eta_\ep \ast v^\ell$ denote the mollification of $v$ in the spatial variables.  Then, as in \cite{CET}, one has (using $v \in C_t L_x^2$) that
\ali{
\begin{split} \label{eq:CETconverge}
\fr{d}{dt} \int_{\T^d} \fr{|v|^2(t,x)}{2} dx &= \lim_{\ep \to 0} \fr{d}{dt} \int_{\T^d} \fr{|v_\ep|^2(t,x)}{2} dx = - \lim_{\ep \to 0} \int_{\T^d} \nb_j (v_\ep)_\ell R_\ep^{j\ell}(t,x) dx 
\end{split}\\
R_\ep^{j\ell}(t,x) &:= v_\ep^j(t,x) v_\ep^\ell(t,x) - \eta_\ep \ast(v^j v^\ell)(t,x),
}
where the convergence in \eqref{eq:CETconverge} holds in $\DD'(I)$.  (See \cite[Proof of Theorem 2.2]{isettOh} for a detailed presentation of this point.)  The rightmost term in \eqref{eq:CETconverge} gives rise to the family of trilinear forms $T_\ep[v, v, v](t) := \int_{\T^d} \nb_j (v_\ep)_\ell R_\ep^{j\ell}(t,x) dx$ that satisfy the following bound uniformly in $\ep$
\ali{
|T_\ep[u,v,w]|(t) &\lsm \| u(t,\cdot) \|_{B_{3,\infty}^{1/3}} \| v(t,\cdot)  \|_{B_{3,\infty}^{1/3}} \| w(t,\cdot) \|_{B_{3,\infty}^{1/3}} \label{eq:trilinBd}
}
by the commutator estimate of \cite{CET}.  Using \eqref{eq:trilinBd}, we have that the family of functions $T_\ep[v,v,v](t)$ are both uniformly bounded and equicontinuous on every compact subinterval of $I$, as they satisfy
\ali{
|T_\ep[v,v,v](t) - T_\ep[v,v,v](t_0)| &\lsm \| v(t,\cdot) - v(t_0,\cdot) \|_{B_{3,\infty}^{1/3}} \| v \|_{C_t B_{3,\infty}^{1/3}}^2,
}
and their moduli of continuity can therefore be bounded uniformly in $\ep$ in terms of the modulus of continuity of $v(t,\cdot)$ into $B_{3,\infty}^{1/3}(\T^d)$ and local bounds for $\| v(t,\cdot) \|_{B_{3,\infty}^{1/3}}$.  Consequently, the convergence in \eqref{eq:CETconverge} is actually uniform in $t$ on every open interval $J$ with compact closure in $I$, as the weak limit in $\DD'(J)$, which is unique, must also be achieved uniformly along subsequences by Arzel\`{a}-Ascoli.  (If the convergence were not uniform, there would exist a subsequence converging uniformly to a continuous function different from \eqref{eq:CETconverge}, which contradicts the weak convergence.)  The energy flux in \eqref{eq:CETconverge}, a priori in $\DD'(I)$, is thus continuous in $t$ on $I$, and the kinetic energy profile is therefore $C^1$ in $t$ on $I$.  

Note that one would typically expect the energy flux given by the right hand side of \eqref{eq:CETconverge} to be nonzero at any given time $t_0$ for a vector field with $v(t_0,\cdot) \in C_x^{1/3}$, as examples of divergence free initial data $v_0(x) \in C^{1/3}$ for which this limit can be positive are given in \cite{eyink, ches}.

We note also that our argument provides a necessary condition for a vector field $v_0(x) \in B_{3,\infty}^{1/3}$ to be realized as the initial datum of an Euler flow in the class $v \in C_t B_{3,\infty}^{1/3}$, which is that the limit $\lim_{\ep \to 0} T_\ep[v_0, v_0, v_0]$ on the right hand side of \eqref{eq:CETconverge} must exist and must also be independent of the chosen mollifying kernel $\eta_\ep$ so that the instantaneous rate of energy dissipation is well-defined at time $0$.

We now turn to the proof of Theorem~\ref{prop:singPos}.

%shows that the existence of the limit in \eqref{eq:CETconverge} must exist for a vector field $v_0(x) \in B_{3,\infty}^{1/3}

\section{Singularities of dissipative solutions with critical regularity} \label{sec:critSing}
We now establish Theorem~\ref{prop:singPos} on the necessity of positive measure singularities Onsager critical solutions with integrability exponent $p > 3$ that do not conserve energy, which is an immediate consequence of Theorems~\ref{prop:locOnsSingCrit} and \ref{prop:endIntDissMsr} below.  Both theorems are stated in terms of Besov spaces whose basic properties we recall within the proofs.  We state the first Theorem~\ref{prop:locOnsSingCrit} in a sharp, critical space to make clear the severity of the singularity that is implicitly discussed in Theorem~\ref{prop:singPos}.

\begin{thm} \label{prop:locOnsSingCrit} Let $(v,p)$ be a weak solution to the incompressible Euler equations of class $v \in L_{t,x}^3$ on $I \times \T^d$ or $I \times \R^d$, $I$ an open interval.  Then the distribution
$-D[v,p] := \pr_t \left(\fr{|v|^2}{2} \right) + \nb_j \left( \left(\fr{|v|^2}{2} + p\right) v^j\right)$
has support contained in the singular support of $v$ relative to the critical space $L_t^3 B_{3,c_0(\N)}^{1/3}$.  
\end{thm}
Here we define the singular support of $v$ relative to the space $L_t^3 B_{3,c_0(\N)}^{1/3}$ to be the complement of those points $q = (t,x)$ for which there exists an open neighborhood $O_q$ of $q$ on which $v$ is represented by a distribution of class $L_t^3 B_{3,c_0(\N)}^{1/3}$.  %\footnote{We recall the definition of the norm  for an open set $\Om$ in $\T^d$ or $\R^d$.} $v \in L_t^3 B_{3,c_0(\N)}^{1/3}(O_q)$.  
We recall the standard characterization of the $B_{r,\infty}^{1/3}$ norm of a vector field on an open set $\Om$ in $\R^d$, which is given by $\| v \|_{L^r(\Om)} + \sup_{h \in \R^d \setminus \{0\}} |h|^{-1/3} \| v(\cdot - h) - v(\cdot) \|_{L_x^r(\Om \cap (\Om + h))} $, and we also recall that  $C^\infty(\Om)$ is dense in $B_{r,c_0(\N)}^{1/3}(\Om)$ with respect to the $B_{r,\infty}^{1/3}$ norm.  %, and that for an open set $\calU = \bigcup_{t \in \R} \{ t \} \times \calU(t)$ in $\R \times \R^d$ or $\R \times \T^d$, the space $L_t^r B_{r,\infty}^{1/3}(\calU)$ consists of distributions $v \in \DD'(\calU)$ for which the norm $\| \|v(t,\cdot)\|_{B_{r,\infty}^{1/3}(\Om(t))} \|_{L_t^r(\R)}$ is finite and that for almost every $t$ belong to $B_{r,c_0(\N)}^{1/3}$.  
It is clear that the singular support of $v$ relative to $L_t^3 B_{3,c_0(\N)}^{1/3}$ is a subset of the usual singular support of $v$ as a distribution.  Related restrictions on the support of $D[v,p]$ under different hypotheses and with different proofs are given in \cite[Theorem 4.3]{chesShv}, \cite[Theorem 1]{drivasNguyen2018onsager} and \cite[Theorem 3.1]{bardTitiWied}.

Our second theorem asserts that weak solutions of class $v \in L_t^r B_{r,\infty}^{1/3}$ for integrability exponents $r > 3$ possess integrability for their corresponding energy dissipation measure $D[v,p]$.  The assumptions are given in a way that is sufficient for our application to proving Theorem~\ref{prop:singPos}.  %Since Theorems~\ref{prop:locOnsSingCrit} and \ref{prop:endIntDissMsr} are local in nature, it is natural to suppose the weak solution $(v,p)$ is defined on an open subset $J \times \Om$ contained in either $I \times \T^d$ or $I \times \R^d$ and to impose assumptions on $J \times \Om$ that are satisfied under the hypothesis of Theorem~\ref{prop:singPos}.  %We impose $p \in L_{t,x}^{3/2}(J \times \Om)$ to avoid some technical issues that are not central to our main objectives here, since this assumption is already satisfied under the global hypotheses of Theorem~\ref{prop:singPos}.  %The assumption $(v,p) \in L_t^r B_{r,\infty}^{1/3} \times L_{t,x}^{3/2}$ is only required to hold locally for the Theorem, and the distribution $D[v,p]$ satisfies the following conclusions on the same open subset on which $v \in L_t^r B_{r,\infty}^{1/3}$ holds.  The requirement on the pressure can be relaxed and can be removed if $v \in L_{t,x}^3$ globally.
\begin{thm} \label{prop:endIntDissMsr} Let $(v,p)$ be a weak solution to incompressible Euler of class $v \in L_t^r B_{r,\infty}^{1/3}$ for some $r \geq 3$ on $I \times \T^d$ or $I \times \R^d$, $I$ an open interval.  Then the distribution $D[v,p]$ above is a (signed) measure.  If furthermore $r > 3$, this measure is absolutely continuous with respect to the Lebesgue measure, and its Radon-Nikodym derivative is of class $D[v,p] \in L_{t,x}^{r/3}$.
\end{thm}
It will be clear that the proof of Theorem~\ref{prop:endIntDissMsr} does not give absolute continuity in the case $r = 3$.  For example, the proof would apply to many other equations such as Burgers', where shock solutions give examples of $L_t^\infty B_{3,\infty}^{1/3}$ solutions for which the corresponding energy dissipation measure is not absolutely continuous.  There also exist time-independent divergence free vector fields demonstrating that our approach would not yield absolute continuity in the $r = 3$ case\footnote{R. Shvydkoy, Personal Communication.}.

\begin{proof}[Proof of Theorem~\ref{prop:singPos}]
Let us observe now that Theorem~\ref{prop:singPos} follows from Theorems~\ref{prop:locOnsSingCrit} and \ref{prop:endIntDissMsr}, focusing on the case of $I \times \R^d$.  Namely, if a weak solution $(v,p)$ is of class $v \in L_{t,x}^3 \cap L_{t,x}^2$ and does not conserve kinetic energy (meaning that the distribution $e(t) := \fr{1}{2} \int_{\R^d} |v|^2(t,x) dx$ is not a constant), then the distribution $D[v,p]$ is well defined and cannot be the $0$ distribution.  This statement can be checked by verifying that, for any test function $\psi \in C_c^\infty(I)$, one has by dominated convergence that
\ALI{
\left\langle \psi(t),  e'(t) \right\rangle_{\DD'(I)} & = \lim_{R \to \infty} \langle \psi(t) \chi_R(x), - D[v,p] \rangle_{\DD'(I \times \R^d)} \\
:= - \int_I \psi'(t) e(t) dt &=  - \lim_{R \to \infty} \int_{I \times \R^d} \left[ \psi'(t) \chi_R(x) \fr{|v|^2}{2} + \psi(t) \nb_j \chi_R(x) \left( \fr{|v|^2}{2} + p\right) v^j \right] dt dx,
}
where $\chi_R(x) = \chi(\fr{x}{R})$ is a rescaled bump function that is equal to $1$ in a growing neighborhood of the origin that encompasses the whole space as $R \to \infty$.  We use here that $\left( \fr{|v|^2}{2} + p\right) v^j$ and $\fr{|v|^2}{2}$ are both in $L_{t,x}^1(I \times \R^d)$ as $v \in L_{t,x}^2 \cap L_{t,x}^3$ and $p = \De^{-1} \nb_j \nb_\ell(v^j v^\ell) \in L_{t,x}^{3/2}$ by Calder\'{o}n-Zygmund theory\footnote{The case of $\T^d$ appears to be less standard than the $\R^d$ case but can be deduced from the $\R^d$ case using the local Calder\'{o}n-Zygmund theory in $\R^d$ as in  \cite{wangCZloc}.  See e.g. \cite[Proof of Theorem 6.2]{isettAOMS}.}, which implies that $\De^{-1} \nb_j \nb_\ell$ acts as a bounded operator on $L_{t,x}^{3/2}$ mapping two-tensors to scalars.  In fact the weaker condition $(1 + |x|)^{-1}\left( \fr{|v|^2}{2} + p\right) v^j \in L_{t,x}^1$ suffices for this proof.

For a solution of class $v \in L_t^r B_{r,\infty}^{1/3}$ with $r >3$, we have by Theorem~\ref{prop:endIntDissMsr} that $D[v,p]$ is of class $L_{t,x}^{r/3}$.  For $D[v,p]$ to be nonzero, the support of $D[v,p]$ as a distribution must then occupy a closed set with positive Lebesgue measure.  From Theorem~\ref{prop:locOnsSingCrit}, the nontrivial support of $D[v,p]$ gives a lower bound for the singular support of $v$ as a distribution, which implies Theorem~\ref{prop:singPos}. 
\end{proof}
We now prove Theorem~\ref{prop:locOnsSingCrit} along with Theorem~\ref{prop:endIntDissMsr}.  The proof is a local version of the energy conservation criteria of \cite{CET, ches}.  The observation that the proof of energy conservation in \cite{CET} can be localized is originally due to \cite{duchonRobert} and has recently been of use to several authors in the context of bounded domains \cite{bardTitiBdd,drivasNguyen2018onsager,bardTitiWied}.  Some issues that are not central to our goals here have been avoided as our hypotheses suffice to guarantee $p = \De^{-1} \nb_j \nb_\ell(v^j v^\ell) \in L_{t,x}^{3/2}$.  The norms and functions space in what follows refer to the entire space $I \times \T^d$ or $I \times \R^d$ unless otherwise stated.  We will focus on the $\R^d$ cases in what follows as the results for $\T^d$ follow from the same proofs.    %although our statement in terms of singular support and the sharp regularity have not been considered in these works.
\begin{proof}[Proof of Theorems~\ref{prop:endIntDissMsr} and \ref{prop:locOnsSingCrit}]  Let $(v,p)$ be a weak solution of class $v \in L_t^r B_{r,\infty}^{1/3} \cap L_{t,x}^2$ for some $r \geq 3$.  Then $v \in L_{t,x}^r \cap L_{t,x}^2$ and $p = \De^{-1} \nb_j \nb_\ell(v^j v^\ell) \in L_{t,x}^{r/2}$ by Calder\'{o}n-Zygmund theory as before.  The key formula we use is the analogue of the \cite{duchonRobert} formula involving the commutator of \cite{CET}:
\ali{
-D[v,p] = \pr_t \left(\fr{|v|^2}{2}\right) \nb_j \left[ \left(\fr{|v|^2}{2} + p \right) v^j \right] &= \lim_{\ep \to 0} \nb_j v_{\ep\ell} R_\ep^{j\ell} \label{eq:dissipRateLim} \\
R_\ep^{j\ell} &= \eta_\ep \ast(v^j v^\ell) - v_\ep^j v_\ep^\ell, \label{eq:stress}
}
where $v_\ep^\ell = \eta_\ep \ast v^\ell$ is a standard mollification of $v^\ell$ in the spatial variables at length scale $\ep$, and the limit \eqref{eq:dissipRateLim} holds for any fixed test function on $I \times \R^d$ or $I \times \T^d$.

We first prove Theorems~\ref{prop:endIntDissMsr} and \ref{prop:locOnsSingCrit} assuming \eqref{eq:dissipRateLim}.  One has by H\"{o}lder's inequality with $\fr{3}{r} = \fr{1}{r} + \fr{2}{r}$ and the commutator estimates of \cite{CET} the following bound uniformly in $\ep$ 
\ali{
\| \nb_j v_{\ep\ell} R_\ep^{j\ell} \|_{L_{t,x}^{r/3}} &\leq  \| \nb_j v_{\ep\ell} \|_{L_{t,x}^r} \|R_\ep^{j\ell} \|_{L_{t,x}^{r/2}} \notag \\
&\lsm ( \ep^{-1 + 1/3} \| v \|_{L_t^r B_{r,\infty}^{1/3}} ) \|R_\ep^{j\ell} \|_{L_{t,x}^{r/2}} \notag \\
&\lsm ( \ep^{-1 + 1/3} \| v \|_{L_t^r B_{r,\infty}^{1/3}} ) \ep^{2/3} \| v \|_{L_t^r B_{r,\infty}^{1/3}}^2 \notag \\
\| \nb_j v_{\ep\ell} R_\ep^{j\ell} \|_{L_{t,x}^{r/3}} &\lsm \| v \|_{L_t^r B_{r,\infty}^{1/3}}^3. \label{eq:unifBd}
}
The sequence $\nb_j v_{\ep\ell} R_\ep^{j\ell}$ is therefore uniformly bounded in $L_{t,x}^{r/3}$ independent of $\ep > 0$.

As a consequence, using $r \geq 3$, the weak limit $D[v,p] = \lim_{\ep \to 0} \nb_j v_{\ep\ell} R_\ep^{j\ell}$ is a Radon measure.  That is, by \eqref{eq:unifBd} and H\"{o}lder's inequality (with the characteristic function of $K$ as one of the factors), for any compact set $K$ and any test function $\phi(t,x)$ supported in $K$, one has 
\ALI{|\langle \phi, D[v,p]\rangle_{\DD'(I\times\R^d)}| \leq C_K \| \phi \|_{C^0} \| v \|_{L_t^r B_{r,\infty}^{1/3}}^3.}

Moreover, for $r > 3$, the measure $D[v,p]$ is absolutely continuous with density function in $L_{t,x}^{r/3}$ by the duality characterization of the latter space, thus confirming Theorem~\ref{prop:endIntDissMsr}.  Namely, if $s \in (1,\infty)$ is the dual exponent with $\fr{1}{s} + \fr{3}{r} = 1$, we have $|\langle \phi, D[v,p]\rangle_{\DD'(I\times\R^d)}| \leq C \| \phi \|_{L_{t,x}^{s}} \| v \|_{L_t^r B_{r,\infty}^{1/3}}^3$.  From the density of test functions in $L_{t,x}^s$, we have that $D[v,p]$ is in the dual of $L_{t,x}^s$, which is the space $L_{t,x}^{r/3}$.

The proof of Theorem~\ref{prop:locOnsSingCrit} is more subtle as the statement concerns the function space $L_t^3 B^{1/3}_{3,c_0(\N)}$ and is more local in nature.  In particular, our approach is local as compared to the Fourier-analytic approach of \cite{ches}; the details in the presentation below are similar to those of \cite{IOheat}.

Let $v \in L_{t,x}^3$ be a weak solution, so that $p \in L_{t,x}^{3/2}$, and let $q$ be a point in the complement of the singular support of $v$ relative to $L_t^3 B_{3,c_0(\N)}^{1/3}$.  That is, there is an open neighborhood of $q$ that can be taken to have the form $J \times B_q$ with $J$ a finite open subinterval of $I$ and $B_q$ a spatial ball such that $v \in L_t^3 B_{3,c_0(\N)}^{1/3}(J \times B_q)$.  Let $\phi \in C_c^\infty(J \times B_q)$ be a fixed test function and $B_q' \subseteq B_q$ be a smaller spatial ball such that $\supp \phi \subseteq J \times B_q'$.  From \eqref{eq:dissipRateLim}, we have 
\ALI{
\langle \phi, - D[v,p] \rangle &= \lim_{\ep \to 0} \int_{J} \int_{B_q'} \phi(t,x) \nb_j v_{\ep\ell} R_\ep^{j\ell} dx dt
}
where by assumption $v \in L_t^3 B_{3,c_0(\N)}^{1/3}(J \times B_q)$.  Then as in the proof of \eqref{eq:unifBd} one has that 
\ali{
|\langle \phi, - D[v,p] \rangle| &\leq \limsup_{\ep \to 0} \| \phi \|_{C^0} \int_{J}  \| \nb v_\ep(t,\cdot) \|_{L^3(B_q')} \| R_\ep^{j\ell}(t,\cdot) \|_{L^{3/2}(B_q')} dt, \label{eq:beforeDomConverge} 
}
and that the $dt$ integrand is bounded uniformly in $\ep$ by $C \| v(t,\cdot) \|_{B_{3,\infty}^{1/3}(B_q)}^3$, which is integrable over $J$.  Moreover, for almost every $t \in J$, one has that $v(t,\cdot) \in B_{3,c_0(\N)}^{1/3}$ belongs to the closure of $C^\infty(B_q)$ in the $B_{3,\infty}^{1/3}$ norm.  For each such $t$, the improved bound $\limsup_{\ep \to 0} \ep^{1 - 1/3} \| \nb v_\ep(t,\cdot) \|_{L^3(B_q')} = 0$ holds, as can be seen by a smooth approximation argument.  Combined with $\| R_\ep^{j\ell}(t,\cdot) \|_{L^{3/2}(B_q')} \leq C_t \, \ep^{2/3} $ on the same set of $t$, we have the convergence to $0$ for almost every $t$ in \eqref{eq:beforeDomConverge}, which implies the limit in \eqref{eq:beforeDomConverge} is zero by the Lebesgue dominated convergence theorem.

The last remaining point is to justify the limit in \eqref{eq:dissipRateLim} for any fixed test function, which we prove using the definition of a weak solution following details similar to \cite{IOheat}.  Let $(v,p)$ be a weak solution of class $v \in L_{t,x}^3$, so that $p \in L_{t,x}^{3/2}$ on $I \times \R^d$ as before.  %and locally $v \in L_{t,x}^2$.  
Let $\phi \in C_c^\infty$ be a test function on $I \times \R^d$ and $V_\phi$ be an open set with compact closure in $I \times \R^d$ that contains $\supp \phi$.  Let $\eta_\ep(h) = \ep^{-d} \eta(\fr{h}{\ep})$ and $\zeta_\de(\tau) = \de^{-1} \zeta(\fr{\tau}{\de})$ be even mollifying kernels in the space and time variables respectively supported in $\supp \eta_\ep \subseteq B_\ep(0)$ in $\R^d$ and $\supp \zeta_\de \subseteq B_\de(0)$ in $\R$.  Define $\eta_{\ep\de}(\tau,h) = \zeta_\de(\tau) \eta_\ep(h)$ and the vector field $\om_{\ep \de}^\ell = \eta_{\ep\de} \ast \left( \phi \, \eta_{\ep\de}\ast v^\ell \right)$, where the convolution is in both space and time.  We will write $\ast_x$ or $\ast_t$ to mean convolution in only the space or time variables.  Taking $\om_{\ep \de}^\ell$ as our test function in the weak formulation of Euler (i.e. multiplying the equation and integrating by parts) gives
\ALI{
- \int_{I \times \R^d}\left[ v^\ell \pr_t \eta_{\ep\de} \ast \left( \phi \, \eta_{\ep\de}\ast v_\ell \right) + v^j v^\ell \nb_j \eta_{\ep\de} \ast \left( \phi \, \eta_{\ep\de}\ast v_\ell \right) + p \nb^\ell \eta_{\ep\de} \ast \left(\phi \, \eta_{\ep\de}\ast v_\ell \right) \right] dx dt &= 0.
}
Using the self-adjointness of $\eta_{\ep\de} \ast$ and the divergence free property of $\eta_{\ep\de} \ast v^\ell$ one obtains
\ALI{
- \int_{I \times \R^d}\left[ \pr_t \phi(t,x) \fr{|\eta_{\ep\de} \ast v^\ell|^2}{2}  +  (v^j v^\ell) \, \eta_{\ep\de} \ast \nb_j[ \phi \, \eta_{\ep\de}\ast v_\ell ] +  p \, \eta_{\ep\de} \ast \left( \nb^\ell \phi \, \eta_{\ep\de}\ast v_\ell \right) \right] dx dt &= 0.
}
As $v \in L_{t,x}^3 \cap L_{t,x}^2(V_\phi)$ and $p \in L_{t,x}^{3/2}(V_\phi)$, we may safely let $\de \to 0$ at this point with $\ep > 0$ fixed using uniform in $\de$ boundedness of the convolution operators in the formula (including the operators $\nb_j \eta_{\ep\de} \ast$ that appear from the product rule) and the strong convergence of $\eta_{\ep \de} \ast v^\ell \to v_\ep^\ell := \eta_\ep \ast_x v^\ell$ in $L_{t,x}^2 \cap L_{t,x}^3(\supp \phi)$ for each fixed $\ep > 0$.  Taking the $\de \to 0$ limit, we may replace each appearance of $\eta_{\ep \de} \ast = \eta_\ep \ast_x [\zeta_\de \ast_t \cdot ]$ in the formula with $\eta_\ep \ast_x$, which we now write more simply as $\eta_\ep \ast := \eta_\ep \ast_x$.

Using the self-adjointness of $\eta_\ep \ast$ and the divergence free property of $v_\ep^\ell$, which are justified by the same limiting argument, one then obtains
\ALI{
- \int_{I \times \R^d}\left[ \pr_t \phi(t,x) \fr{|\eta_{\ep} \ast v^\ell|^2}{2} \right.&\left.+ \nb_j \phi(t,x) \left( \fr{|v_\ep|^2}{2} v_\ep^j + \eta_\ep \ast p \, v_\ep^j \right) \right] dx dt = \int_{I \times \R^d} \phi(t,x) \nb_j v_{\ep \ell} R_\ep^{j\ell} dx dt + Z_\ep, \\
Z_\ep &:= \int_{I \times \R^d} \nb_j \phi \, R_\ep^{j\ell} v_{\ep \ell}  \, dx dt.
}
Note that the left hand side of the first equation tends to exactly $\langle \phi, -D[v,p] \rangle_{\DD'(I \times \R^d)}$ as $\ep \to 0$, using that $v_\ep^\ell = \eta_\ep \ast v^\ell \to v^\ell$ in $L_{t,x}^3 \cap L_{t,x}^2(V_\phi)$ and that $p \in L_{t,x}^{3/2}$ again.  Thus formula \eqref{eq:dissipRateLim} will be proven once it is shown that $\lim_{\ep \to 0} Z_\ep = 0$.

To this end, write $R_\ep^{j\ell}$ in terms of bilinear operators $R_\ep^{j\ell} = B_\ep[v^j, v^\ell]$, where the operators $B_\ep$ are defined  for smooth $u^j, w^\ell$ by 
$B_\ep[u^j, w^\ell] := \eta_\ep \ast (u^j w^\ell) - \eta_\ep \ast u^j \eta_\ep \ast w^\ell$.  
%can be written at fixed time in the covariance form
%\ALI{
%B_\ep[u^j, w^\ell](x) &= \int_{\R^d} \left( u^j(x + h) - \eta_\ep \ast u^j(x) \right) \left( w^\ell(x + h) - \eta_\ep \ast w^\ell(x) \right) \eta_\ep(h) dh
%}
%using that $\int_{\R^d} \eta_\ep(h) dh = 1$.  
One has then that $\| B_\ep[u, w] \|_{L_{t,x}^{3/2}(V_\phi)} \to 0$ as $\ep \to 0$ whenever $u^j, w^\ell$ are smooth vector fields on $I \times \R^d$, and that $\| B_\ep[u, w] \|_{L_{t,x}^{3/2}(V_\phi)} \leq C \| u \|_{L_{t,x}^3} \| w \|_{L_{t,x}^3(I\times \R^d)}$ uniformly in $\ep > 0$.  Combining these properties and using the density of smooth vector fields in $L_{t,x}^3(I\times \R^d)$, we obtain that $\| R_\ep^{j\ell} \|_{L_{t,x}^{3/2}(V_\phi)} \to 0$ as $\ep \to 0$, and $Z_\ep \to 0$ as well by applying H\"{o}lder with $v_\ep$ bounded in $L_{t,x}^3(V_\phi)$.
%The operators $B_\ep$ satisfy uniform in $\ep$ bounds of the form $\| B_{\ep[u^j, v^\ell] $
%Letting $\ep \to 0$ so that $v_\ep^\ell = \eta_\ep \ast v^\ell \to v^\ell$ in $L_{t,x}^3 \cap L_{t,x}^2$ on the support of $\phi$ and using $p \in L_{t,x}^{3/2}$ again, the left hand side converges by definition to $\langle \phi, - D[v,p] \rangle$, which confirms the desired formula \eqref{eq:dissipRateLim}.
%applying the dominated convergence theorem, and using that every mollified term converges uniformly in $x$ to their limits on the compact support of $\phi$.  This convergence may be seen by writing $\eta_{\ep\de} \ast_{t,x} = \eta_\ep \ast_x \zeta_\de \ast_t $ and using the integrability of each term together with the smoothing properties of $\eta_\ep \ast_x$.  
\end{proof}

\subsection{Stability of local energy dissipation in a critical class} \label{sec:locDissstable}
In this section we prove Theorem~\ref{thm:localDissstable}, which provides a simple function space criterion from which one can deduce local dissipation on an open interval of time from local dissipation at time $0$.

\begin{thm} \label{thm:localDissstable} Let $\bar{v}$ be a divergence free vector field of class $\bar{v} \in C^{1/3}(\T^d)$ for which the local energy dissipation is everywhere bounded by a strictly negative constant.  Then any weak solution $(v,p)$ of class $v \in C_t C^{1/3}(I \times \T^d)$ that obtains the initial data $\bar{v}$ must satisfy the local energy inequality $D[v,p] < 0$ on some open time interval containing $t = 0$.
\end{thm}
The precise condition on the initial data $\bar{v}$ will be specified in line \eqref{eq:localDissipIncondit} of the proof below.
\begin{proof}  Let $\bar{v}$ be as above and let $(v, p)$ be a weak solution to Euler of class $v \in C_t C^{1/3}(I \times \T^d)$ on an open interval of time containing $t = 0$ with initial data $\bar{v}(x)$.  Let $\tilde{I}$ be an open subinterval of $I$ containing $t = 0$, and let $\phi \in C_c^\infty(\tilde{I} \times \T^d)$ a non-negative test function supported in $t \in \tilde{I}$.  As in the previous sections, we have
\ALI{
\langle \phi, D[v,p] \rangle &= \lim_{\ep \to 0} \int_{\tilde{I}} \int_{\T^d} \phi(t,x) T_\ep[v](t,x) dx dt \\
T_\ep[v](t,x) &= \nb_j v_{\ep \ell} R_{\ep}^{j\ell}(t,x),
}
where $T_\ep$ is the trilinear form from Section~\ref{sec:regKineticCrit} and $D[v,p]$ is as in the previous sections.
We write
\ali{
\langle \phi, D[v,p] \rangle &= \lim_{\ep \to 0} \int_{\tilde{I}} \int_{\T^d} \phi(t,x) ( T_\ep[v](t,x) - T_\ep[v](0,x)) dx dt \notag \\ % \label{eq:perturbTerm} \\
&+ \lim_{\ep \to 0} \int_{\tilde{I}} \int_{\T^d} \phi(t,x) T_\ep[v](0,x) dx dt \label{eq:signedTerm}
}
The precise assumption placed on the initial condition $\bar{v}$ is that
\ali{
\lim_{\ep \to 0} T_\ep[v](0, x) = \lim_{\ep \to 0} \nb_j \bar{v}_{\ep \ell} R_\ep^{j\ell}(0,x) \leq - \varep < 0 \label{eq:localDissipIncondit}
}
in the sense of distributions on $\T^d$ for some constant $\varep > 0$.  %That is, one has $\lim_{\ep \to 0} \langle T_\ep[\bar{v}], \tilde{\phi} \rangle < -\ep \int_{\T^d} \tilde{\phi} dx $ for all non-negative and nonzero $\tilde{\phi} \in C_c^\infty(\T^d)$  
Integrating  \eqref{eq:localDissipIncondit} against the non-negative test function $\tilde{\phi}(x) = \int_{\tilde{I}}  \phi(t,x) dt \in C^\infty(\T^d)$, we have that
\ALI{
\eqref{eq:signedTerm} \leq - \varep \int_{\T^d} \left[ \int_{\tilde{I}}  \phi(t,x) dt \right] dx.
}
For a sufficiently small time interval $\tilde{I}$, we can obtain the bound 
\ALI{
\| T_\ep[v](t,x) - T_\ep[v](0,x) \|_{L^\infty(\tilde{I} \times \T^d) } \leq \fr{\varep}{2} 
}
using that $v \in C_t C_x^{1/3}$ is continuous in time with values in $C^{1/3}$ and the commutator estimate of \cite{CET} to control the bilinear term.  Combining these estimates with the sign condition on $\phi$ gives 
\ALI{
\langle \phi, D[v,p] \rangle &\leq \| \phi \|_{L^1(\tilde{I} \times \T^d)} \fr{\varep}{2}  - \varep   \int_{\tilde{I}} \int_{\T^d} \phi(t,x) dx dt \\
\langle \phi, D[v,p] \rangle &\leq -\fr{\varep}{2} \int_{\tilde{I}} \int_{\T^d} \phi(t,x) dx dt
}
for all non-negative $\phi \in C_c^\infty(\tilde{I} \times \T^d)$.  This bound shows that $D[v,p] \leq -\varep/2 < 0$ as a distribution when restricted to $\tilde{I}\times \T^d$, which concludes the proof of Theorem~\ref{thm:localDissstable}.
\end{proof}

With Theorems~\ref{prop:singPos}-\ref{thm:localDissstable} now proven, %\ref{prop:locOnsSingCrit}-\ref{prop:endIntDissMsr} and \ref{thm:localDissstable} now proven, 
we turn to the notation that will be used for the remainder of the paper and the proof of Theorem~\ref{thm:mainThm}.
%  We prove Theorem~\ref{prop:locOnsSingCrit} together with Theorem~\ref{prop:endIntDissMsr}.

\section{Notation} \label{sec:notation}
We will follow the same notational conventions as introduced in \cite[Section 2]{isettOnsag}.  In particular, multi-indices will be represented in vector notation.  For example, if $\va = (a_1, a_2, a_3)$ is a multi-index of order $|\va| = 3$, each $a_i \in \{ 1, 2, 3 \}$, then $\nb_{\va} = \nb_{a_1} \nb_{a_2} \nb_{a_3}$ denotes the corresponding third-order partial derivative operator.  We use $\suppt f$ to indicate the time support of a function $f$ with domain in $\R \times \T^3$ (i.e., the closed set of times for which $\{ t \} \times \T^3$ intersects the usual support).

%we use vector notation $\va$ to denote a multi-index.  corresponding to a partial derivative operator $\nb_{\va} = \nb_{a_1} \nb_{a_2} \cdots \nb_{a_n}$ and we write $|\va|$ to denote its order.  %$\va = (a_1, a_2, \ldots, a_n)$, and $|\va|$ to denote the order of the multi-index.

We recall the definitions of an Euler-Reynolds flow and frequency-energy levels.
\begin{defn}\label{defn:euReynFlow}  A vector field $v^\ell : \R \times \T^3 \to \R^3$, function $p : \R \times \T^3 \to \R$ and symmetric tensor field $R^{j\ell} : \R \times \T^3 \to \R^3 \otimes \R^3$ satisfy the {\bf Euler-Reynolds equations} if the equations
\ALI{
\pr_t v^\ell + \nb_j(v^j v^\ell) + \nb^\ell p &=\nb_j R^{j\ell} \\
\nb_j v^j &= 0
}
hold on $\R \times \T^3$.  Any solution to the Euler-Reynolds equations $(v,p,R)$ is called an {\bf Euler-Reynolds flow}.  The symmetric tensor field $R^{j\ell}$ is called the {\bf stress} tensor.
\end{defn}
\begin{defn}\label{defn:frenlvls}  Let $(v, p, R)$ be a solution of the Euler-Reynolds equation, $\Xi \geq 3$ and $e_v \geq e_R > 0$ be positive numbers.  We say that $(v, p, R)$ have {\bf frequency-energy levels} bounded by $(\Xi, e_v, e_R)$ to order $L$ in $C^0$ if $v$ and $R$ are of class $C_t C_x^L$ and the following estimates hold
\ali{
\co{\nab_{\va} v} &\leq \Xi^{|\va|} e_v^{1/2}, \quad \tx{ for all } 1 \leq |\va| \leq L \label{eq:frEnVelocbds} \\
\co{\nab_{\va}R} &\leq \Xi^{|\va|} e_R, \quad \tx{ for all } 0 \leq |\va| \leq L.
}
Here $\nab$ refers only to derivatives in the spatial variables.
\end{defn}

\section{The Main Lemma} \label{sec:mainLem}
The first goal of the paper will be to improve on the Main Lemma in \cite{isettOnsag} so that we remove the need for a double-exponential growth of frequencies.  The Main Lemma of our paper states the following:
\begin{lem}[The Main Lemma] \label{lem:mainLem}  Let $L = 3$.  There exists constants $\hc, C_L$ such that the following holds.   Let $(v, p, R)$ be any solution of the Euler-Reynolds equation with frequency-energy levels bounded by $(\Xi, e_v, e_R)$ to order $L$ in $C^0$ and let $J$ be an open subinterval of $\R$ such that 
\ALI{
\suppt v \cup \suppt R &\subseteq J.
}
Define the parameter $\hxi = \Xi (e_v/e_R)^{1/2}$.  Let $N$ be any positive number obeying the condition
\ali{
N \geq (e_v/e_R)^{1/2}.  \label{eq:Nrestrict}
}
Then there exists a solution $(v_1, p_1, R_1)$ of Euler-Reynolds with frequency-energy levels bounded by
\ali{
(\Xi', e_v', e_R') &= \left(\hc N \Xi, \plhxi e_R, \plhxi^{5/2} \fr{e_v^{1/2} e_R^{1/2}}{N} \right) \label{eq:newFreqEn}
}
to order $L$ in $C^0$ such that 
\ali{
\suppt v_1 \cup \suppt R_1 &\subseteq N(J; \Xi^{-1} e_v^{-1/2}) \label{ct:suppCdn}
}
and such that the correction $V = v_1 - v$ obeys the estimate
\ali{
\co{V} &\leq C_L \plhxi^{1/2} e_R^{1/2}. \label{ineq:coBdV} %\\
%\co{\nab V} &\leq \hc N \Xi \plhxi^{1/2} e_R^{1/2}, \label{ineq:coNbvCorrect}
}
%where the constant $\check{C}_0$ in \eqref{ineq:coBdV} is an absolute constant, independent of $\eta$.
\end{lem}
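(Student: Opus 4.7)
The plan is to follow the two-step strategy of \cite{isettOnsag}: first a \emph{gluing approximation} step that concentrates the stress $R$ into thin temporal ``bad zones'' separated by sub-intervals on which one has exact Euler solutions, and then a \emph{convex integration} step that adds a high-frequency, localized Mikado-flow correction $V$ supported in those bad zones to cancel the concentrated stress. Throughout, the goal is that the only loss of frequency in any estimate be the logarithmic factor $\plhxi$ appearing in the new energy levels \eqref{eq:newFreqEn}.

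For the gluing step, I would spatially mollify $v$ at scale $\Xi^{-1}$ and solve the exact Euler equations locally on overlapping sub-intervals of $J$ whose length is slightly longer than the natural local existence timescale $\Xi^{-1} e_v^{-1/2}$, following the construction of \cite{isettOnsag}; stretching this timescale by a factor of $\plhxi^{1/2}$ or so allows the gluing to pay only a logarithmic price, and one obtains a new Euler-Reynolds flow $(v_*, p_*, R_*)$ whose stress $R_*$ is supported on thin transition regions of total temporal measure $\sim \plhxi / (\Xi e_v^{1/2})$ with $\|R_*\|_{C^0} \lesssim \plhxi \cdot e_R$. This is the origin of the $\plhxi$ in $e_v'$ and of the $\plhxi^{5/2}$ in $e_R'$.

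The convex integration step then adds a correction $V = \sum_I V_I$ built from Mikado flows at frequency of order $\hc N \Xi$ and amplitude of order $\plhxi^{1/2} e_R^{1/2}$, designed so that $V \otimes V$ cancels $R_*$ at the low-frequency level; this directly gives \eqref{ineq:coBdV}. In contrast to \cite{isett}, each wave $V_I$ is cut off to a small spatial ball following the localization strategy of \cite{IOnonpd}, and is engineered to conserve both linear and angular momentum on each piece. The support condition \eqref{ct:suppCdn} and the derivative bounds to order $L = 3$ then follow from the temporal localization of the waves and standard bookkeeping using the hypothesis \eqref{eq:Nrestrict}.

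The main obstacle, and the only place the present argument departs essentially from \cite{isettOnsag}, is the $C^0$-estimate for the new stress $R_1$. In the previous scheme the inversion of the symmetric divergence in the convex integration step incurred a power loss in $\Xi$, which is incompatible with a borderline-regularity iteration. The fix I would employ is to combine (i) the parametrix / nonlinear-phase non-stationary-phase method of \cite{isett}, which handles the high-frequency symbol without having to commute a global pseudo-differential inverse past phase functions, with (ii) the compactly supported symmetric-divergence operators of \cite{IOnonpd}, which are applicable precisely because each localized wave $V_I$ conserves linear and angular momentum. Put together, these two devices produce an $R_1$ whose $C^0$ size is only $\plhxi^{5/2} e_v^{1/2} e_R^{1/2}/N$, with no power-loss in $\Xi$, matching the target $e_R'$ in \eqref{eq:newFreqEn}. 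Once this sharp stress estimate is in hand, the remaining frequency-energy bounds on $v_1$ and its derivatives follow by essentially the same bookkeeping as in \cite{isettOnsag}.
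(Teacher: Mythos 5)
Your proposal follows exactly the paper's strategy: the two-step decomposition into a gluing approximation (concentrating $R$ into thin temporal zones at the cost of a $\log\hxi$ factor) followed by convex integration with spatially localized Mikado waves, where the power-loss in $\Xi$ is eliminated by combining the parametrix/nonlinear-phase nonstationary phase argument of \cite{isett} with the compactly supported symmetric anti-divergence operators of \cite{IOnonpd}, enabled by arranging that each localized wave satisfies the linear and angular momentum orthogonality conditions. This matches the paper's proof in all essential respects.
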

The crucial difference between the Main Lemma above as compared to \cite[Lemma 2.1]{isettOnsag} is that we do not require any lower bound of the form $N \geq \Xi^\eta$ for the frequency growth parameter $N$ in inequality \eqref{eq:Nrestrict}.  This difference enables us to avoid double-exponential growth of frequencies in constructing solutions as in \cite{IOnonpd}.  Likewise, the constants $\hc$ and $C_L$ in the estimates do not depend on such a parameter $\eta$.

%The assumption $N \geq \Xi^{\eta}$ that was present in \cite[Lemma 2.1]{isettOnsag} had been 

We establish Lemma~\ref{lem:mainLem} by modifying the proof of the Convex Integration Lemma, \cite[Lemma 3.3]{isettOnsag}, as the proof of this Lemma contains the only step in which the assumption $N \geq \Xi^\eta$ is used.  
\section{The Improved Convex Integration Lemma}
As in \cite{isettOnsag}, we will establish Lemma~\ref{lem:mainLem} by combining a Gluing Approximation Lemma and a Convex Integration Lemma.  In Lemma~\ref{lem:glueLem} below, we summarize the result of combining the Regularization Lemma and the Gluing Approximation Lemma from \cite[Section 3]{isettOnsag}.  (Here we have renamed the Euler-Reynolds flow $(\tilde{v}, \tilde{p}, \wtld{R})$ in \cite[Lemma 3.2]{isettOnsag} to be $(v,p,R)$.)
%The Lemmas 3.1 and 3.2 in \cite{isettOnsag} combine to yield the following Lemma.
\begin{lem}[Gluing Approximation Lemma] \label{lem:glueLem} There are absolute constants $C_1 \geq 2$ and $\de_0 \in (0, 1/25)$ such that the following holds.  Let $(v_0, p_0, R_0)$ be an Euler-Reynolds flow with frequency-energy levels bounded by $(\Xi, e_v, e_R)$ to order $3$ in $C^0$ such that $\suppt v_0 \cup \suppt R_0 \subseteq J$.  Define the parameters
\ali{
\nhat := (e_v/e_R)^{1/2}, \quad  \hxi := \nhat \Xi = (e_v/e_R)^{1/2} \Xi.
}
Then for any $0 < \de \leq \de_0$ there exist: a constant $C_\de \geq 1$, a constant $\th > 0$, a sequence of times $\{t(I)\}_{I \in \Z} \subseteq \R$ and an Euler-Reynolds flow $(v, p, R)$, $R = \sum_{I \in \Z} R_I$, that satisfy the support restrictions
\ali{
\suppt v \cup \suppt R &\subseteq N(J; 3^{-1}\Xi^{-1} e_v^{-1/2}) \label{ct:growth} \\
2^{-1} \de \plhxi^{-2} \Xi^{-1} e_v^{-1/2} \leq \th &\leq  \de \plhxi^{-2} \Xi^{-1} e_v^{-1/2} \label{ineq:thBound} \\
\suppt R_I &\subseteq \left[t(I) - \fr{\th}{2}, t(I) + \fr{\th}{2}\right] \label{eq:supptRI} \\
\bigcup_I \bigcup_{I' \neq I} [t(I) - \th, t(I) + \th] &\cap [t(I') - \th, t(I') + \th] = \emptyset \label{ct:disjointness}
}
and the following estimates
\ali{
\co{ v - v_0 } &\leq C_1 e_R^{1/2}  \\
\co{ \nab_{\va} v } &\leq C_1 \Xi^{|\va|} e_v^{1/2}, \quad |\va| = 1, 2, 3 \label{eq:newVelocBdGlue} \\
\sup_I \co{ \nab_{\va} R_I  } &\leq C_\de \nhat^{(|\va|-2)_+} \Xi^{|\va|} \lhxi  e_R, \quad |\va| = 0, 1,2, 3 \label{eq:newRIbdGlue} \\
\sup_I \co{ \nab_{\va} (\pr_t + v \cdot \nab) R_I } &\leq C_\de  \plhxi^3 \Xi e_v^{1/2} \Xi^{|\va|} e_R, \quad |\va| = 0, 1, 2. \label{eq:newDtRIbdGlue}
}
\end{lem}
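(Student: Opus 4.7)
The plan is to derive Lemma~\ref{lem:glueLem} by composing the Regularization Lemma and the Gluing Approximation Lemma of \cite[Section 3]{isettOnsag}, applied in sequence to $(v_0, p_0, R_0)$. Since the statement is advertised as a summary of those two existing results, the work lies in checking that their hypotheses chain together correctly and that the constants carry through without new losses of powers of $\hxi$.

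First I would apply the Regularization Lemma to $(v_0, p_0, R_0)$ to produce a spatially and temporally mollified Euler-Reynolds flow $(\widetilde v, \widetilde p, \widetilde R)$ with the same frequency-energy levels at low orders and controlled derivative bounds at all higher orders, uniformly in the mollification parameters. Second I would apply the Gluing Approximation Lemma to $(\widetilde v, \widetilde p, \widetilde R)$: choose a time grid $\{t(I)\}_{I \in \Z}$ with spacing $\th$ satisfying \eqref{ineq:thBound}, solve the exact incompressible Euler equations on each interval $[t(I) - \th, t(I) + \th]$ with initial data $\widetilde v(t(I),\cdot)$ to produce exact Euler solutions $v_I$, and glue them via a smooth partition of unity $\{\chi_I(t)\}$ in time to obtain $v = \sum_I \chi_I v_I$. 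The error stress $R = \sum_I R_I$ is localized to the overlap regions between consecutive cutoffs, which yields both \eqref{eq:supptRI} and \eqref{ct:disjointness} after arranging that triple overlaps are empty. The support restriction \eqref{ct:growth} follows by tracking the time widths of the mollifications together with the gluing intervals, each of order at most $\Xi^{-1} e_v^{-1/2}$, and the velocity bounds \eqref{eq:newVelocBdGlue} follow from persistence of regularity for the exact Euler evolution over the short interval of length $\th$.

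The main obstacle is verifying the quantitative bounds \eqref{eq:newRIbdGlue}--\eqref{eq:newDtRIbdGlue} with only logarithmic losses in the frequency. The naive local well-posedness theory for Euler would produce losses of polynomial powers of $\hxi$ on the natural existence timescale $\Xi^{-1} e_v^{-1/2}$, which would be fatal for approaching the critical exponent. The gain to logarithmic losses is precisely what forces the choice of the shorter timescale $\th \sim \de \plhxi^{-2} \Xi^{-1} e_v^{-1/2}$: on such intervals, two exact Euler solutions $v_I$ and $v_{I+1}$ with neighboring initial data generated from a single smoothed Euler-Reynolds flow differ by an amount controlled by $\coI{\widetilde R}$ times the time separation rather than by the integrated Lipschitz norm of the flow, so that the resulting stress $R_I \sim (v_I - v_{I+1}) \otimes (\text{terms})$ obeys the required bound with the expected factor of $\lhxi$. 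The advective derivative estimate \eqref{eq:newDtRIbdGlue} is obtained by using the exact Euler equation for each $v_I$ to replace $\pr_t v_I$ by spatial derivative terms and then exploiting the cancellation between $\pr_t R_I$ and $v \cdot \nb R_I$, with the factor $\plhxi^3 \Xi e_v^{1/2}$ arising from the inverse timescale $\th^{-1}$ together with the $\plhxi$ loss already present in the spatial estimates for $R_I$.
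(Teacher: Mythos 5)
Your approach matches the paper's: Lemma~\ref{lem:glueLem} is explicitly stated in the paper as a restatement of the combined Regularization and Gluing Approximation Lemmas of \cite[Section 3]{isettOnsag}, so the correct proof is precisely to chain those two results, and your plan to do exactly that is sound. Your high-level description of the gluing mechanism (exact Euler solutions launched from a time grid, glued by cutoffs, error stress supported in the transition regions) is also accurate.

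One point in your heuristic explanation is reversed, though. You describe the mechanism for the logarithmic (rather than polynomial) loss as coming from choosing a "shorter" timescale $\th \sim \de \plhxi^{-2} \Xi^{-1} e_v^{-1/2}$, and you phrase the naive LWP obstruction as a power loss on the timescale $\Xi^{-1} e_v^{-1/2}$. In fact, the key difficulty in \cite{isettOnsag} is the opposite: $\th$ is \emph{longer} than the standard local well-posedness timescale (which scales like $\| \nb v \|_{C^\de}^{-1} \sim \Xi^{-1-\de} e_v^{-1/2}$, smaller than $\th$ by a genuine power of $\Xi$), and simply shortening the window below LWP scale would make the cutoff derivative $\th^{-1}$ contribute an unacceptable power of $\Xi$ to the glued stress. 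The actual content of the cited lemma is to prove that the estimates on $y_I = v_I - \widetilde v$ and its anti-divergence $r_I$ persist on this extended interval with only a $\lhxi$ loss, which is done via the frequency-incremented Gronwall/Littlewood--Paley argument of \cite[Section 10]{isettOnsag}; the paper itself highlights this extension beyond the LWP timescale as the crucial feature. So the factor $\lhxi$ in \eqref{eq:newRIbdGlue} is not an automatic consequence of the timescale choice but rather the output of a nontrivial estimate proved there. With that correction understood, your sketch of the structure of $R_I$ (and the advective derivative bound via $\th^{-1}$ gains plus the cancellation $(\pr_t + v\cdot\nb)$) is consistent with the cited argument.

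Finally, a bookkeeping detail: for the disjointness condition \eqref{ct:disjointness} together with the support bound \eqref{eq:supptRI} to hold, the grid spacing $|t(I) - t(I+1)|$ must exceed $2\th$, so describing it as "spacing $\th$" is slightly off; this does not affect the argument but should be kept consistent with the cited construction.
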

%Lemma~\ref{lem:mainLem} follows from Lemma~\ref{lem:glueLem} combined with the following Convex Integration Lemma.
Our improved Convex Integration Lemma may then be stated as follows.
\begin{lem}[The Convex Integration Lemma] \label{lem:convexInt} There exists an absolute constant $b_0$ such that for any $C_1, C_\de \geq 1$ and $\de > 0$ there is a constant $\tilde{C} = \tilde{C}_{\de, C_1, C_\de}$ for which the following holds.  Suppose $J$ is a subinterval of $\R$ and $(v, p, R)$ is an Euler-Reynolds flow, $R = \sum_I R_I$, that satisfy the conclusions \eqref{ct:growth}-\eqref{ct:disjointness} and \eqref{eq:newVelocBdGlue}-\eqref{eq:newRIbdGlue} of Lemma~\ref{lem:glueLem} for some $(\Xi, e_v, e_R)$, some $\th  > 0$ and some sequence of times $\{ t(I)\}_{I \in \Z} \subseteq \R$.  Suppose also that
\ali{
|\th| \co{ \nab v } &\leq b_0. \label{eq:b0bd}
}
Let $N \geq (e_v/e_R)^{1/2}$.  Then there is an Euler-Reynolds flow $(v_1, p_1, R_1)$ with frequency-energy levels in the sense of Definition~\ref{defn:frenlvls} bounded by
\ali{
(\Xi', e_v', e_R') &= \left(\tilde{C} N \Xi, \plhxi e_R, \plhxi^{5/2} \fr{e_v^{1/2} e_R^{1/2}}{N} \right) \label{eq:newFrEnLvls}
}
such that
\ali{
\suppt v_1 \cup \suppt R_1 &\subseteq N(J; \Xi^{-1} e_v^{-1/2}) \label{ct:suppCvxInt} \\
\co{ v_1 - v } &\leq \tilde{C} \plhxi^{1/2} e_R^{1/2}. \label{eq:cobdcorrect}
}
\end{lem}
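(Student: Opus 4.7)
\medskip
\noindent\textbf{Proof plan for Lemma~\ref{lem:convexInt}.}

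The plan is to follow the convex integration scheme of \cite{isettOnsag}, but to replace the final ``divergence equation'' step with the localized construction of \cite{IOnonpd} so that the standard power loss of $\Xi^\eta$ can be traded for a bounded constant. First I would introduce a smooth partition of unity $\{\eta_I(t)\}_{I\in\Z}$ subordinate to the intervals $[t(I)-\th, t(I)+\th]$; by the disjointness hypothesis \eqref{ct:disjointness} these can be chosen with $|\pr_t^k \eta_I| \lesssim \th^{-k}$, which together with \eqref{eq:newDtRIbdGlue} and \eqref{eq:b0bd} controls all material derivatives of $\eta_I R_I$. For each $I$, let $\Phi_I(t,x)$ be the Lagrangian flow map of $v$ based at time $t(I)$, and use it to pull back $R_I$ to a tensor that is essentially constant in time along the flow. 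The correction is built as $V = \sum_I V_I$ with $V_I$ supported in $[t(I)-\th,t(I)+\th]$, so the support restriction \eqref{ct:suppCvxInt} follows from \eqref{ct:growth} and the CFL bound \eqref{eq:b0bd}.

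Next I would specify the waves. Following \cite{IOnonpd}, I replace the ``global'' Mikado waves of \cite{isettOnsag} by waves $V_I$ that are obtained by summing oscillatory profiles each localized to a small spatial ball of diameter $\sim(N\Xi)^{-1}$ and pulled back by $\Phi_I$. The crucial structural requirement on each localized wave is that it be divergence free and simultaneously satisfy the conservation of both linear and angular momentum, i.e.\ that the zeroth and first Fourier moments of each building block vanish. This is the necessary (and, as in \cite{IOnonpd}, sufficient) condition for the localized stress cancellation to admit a compactly supported, symmetric divergence representative. The amplitudes are chosen so that the low-frequency part of $V_I \otimes V_I + V_I \otimes v + v \otimes V_I$ cancels $\eta_I R_I$ to leading order; the amplitude size $\co{V_I} \lesssim \plhxi^{1/2} e_R^{1/2}$ comes directly from \eqref{eq:newRIbdGlue} and gives \eqref{eq:cobdcorrect}.

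Then I would assemble $R_1$ from the residual terms: a transport term $(\pr_t + v\cdot\nb)V$, a high-high interaction $V_I\otimes V_I$ minus its low-frequency average, the stress error from $\eta_I R_I$ not exactly cancelling, and a mollification error. Each term has the form of a high-frequency tensor that must be written as $\nb_j(\cdot)^{j\ell}$ with $(\cdot)^{j\ell}$ symmetric and compactly supported. This is where the angular momentum constraint pays off: the operators ${\cal R}$ from \cite{IOnonpd} invert $\nb_j$ on such tensors compactly and symmetrically, and their kernels gain exactly one factor of $(N\Xi)^{-1}$ without any power of $\Xi$. To extract this gain I would apply the nonstationary phase estimate of \cite{isett} built on a parametrix and nonlinear phase functions $\Phi_I$, which unlike the integration-by-parts version used in \cite{isettOnsag} produces only logarithmic losses in $\plhxi$ and never a power $\Xi^\eta$. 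Summing the contributions gives the claimed $e_R' = \plhxi^{5/2} e_v^{1/2}e_R^{1/2}/N$ and $\Xi' = \tilde{C}N\Xi$ as in \eqref{eq:newFrEnLvls}.

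The main obstacle is the control of the higher derivatives $\nb_{\va}R_1$ for $|\va| \leq 3$ without picking up $\Xi^\eta$ factors. Each spatial derivative landing on the oscillatory phase contributes $N\Xi$, which is acceptable, but derivatives landing on the amplitudes or on the inverse-divergence kernels must produce only $\Xi$ and $\plhxi$ factors; this requires using the bounds \eqref{eq:newRIbdGlue}--\eqref{eq:newDtRIbdGlue} on material derivatives of $R_I$ together with the Lagrangian coordinates so that only ``good'' derivatives appear on the amplitudes. A careful bookkeeping, analogous to \cite[Sections 5--7]{IOnonpd} but adapted to the Mikado setting and to the gluing-produced bounds of Lemma~\ref{lem:glueLem}, should then close the estimates and complete the construction.
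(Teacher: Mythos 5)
Your overall strategy matches the paper's: localize the Mikado waves in space, impose conservation of both linear and angular momentum via a second-order (double-divergence) potential, and invert the divergence with the compactly supported, symmetric operators of \cite{IOnonpd} acting on the small remainder left by the parametrix expansion of \cite{isett}. However, there are two genuine gaps. First, the paper localizes the amplitudes at the coarse spatial scale $\sim \Xi^{-1}$ (via the transported quadratic partition of unity $\chi_{(I,[k])}$, with $\Pi \sim \Xi$), not at the oscillation scale $\sim (N\Xi)^{-1}$ as you propose. This matters: if the cutoffs live at scale $(N\Xi)^{-1}$, each spatial derivative of an amplitude costs $N\Xi$, which overwhelms the bounds \eqref{eq:amplitudeBounds}--\eqref{eq:lowOrderDtAmpbds} (derivatives of amplitudes are supposed to cost only $N^{(|\va|-1)_+/2}\Xi^{|\va|}$, matching the profile of $\nb v_\ep$). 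Correspondingly, the inverse-divergence operators gain a factor $\Xi^{-1}$, not $(N\Xi)^{-1}$; the $N^{-1}$ gain is supplied separately by the $N^{-D/2}$ decay of the parametrix remainder.

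Second, and more importantly, your sketch never addresses the interference between overlapping localized waves. Once you break a single Mikado wave into pieces supported on overlapping cells, the off-diagonal terms $\VR_J \otimes \VR_K$ ($J\neq K$) in the quadratic interaction are generically nonzero and low-frequency, and they are not small. The paper's central new device is the ``threading'' of Mikado flows described in Section~\ref{sec:threadingMikadoFlows}: the lines $\ell_{(f,[k])}$ are chosen to be disjoint in $\T^3$ for $(f,[k]) \in \F \times (\Z/2\Z)^3$, and the cutoffs $\chi_{(I,[k])}$ are supported on $3/4$ of their cell, so that the main parts $\VR_J$ of distinct waves at the same time $I$ have \emph{disjoint} supports (eq.~\eqref{eq:disjointSupps}); this disjointness then propagates in time because both the cutoffs and the phase functions are transported by the same flow. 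Without this, the off-diagonal stress $\sum_{J\neq K} \VR_J^j \VR_K^\ell$ cannot be controlled and the iteration fails. As a minor remark, your diagnosis that the power loss of \cite{isettOnsag} came from a non-parametrix nonstationary-phase argument is also off: \cite{isettOnsag} already uses the same parametrix expansion (cf.\ Proposition~\ref{prop:parametric} being ``similar to \cite[Proposition 17.6]{isettOnsag}''); the loss is instead avoided here by the localization/threading plus the compactly supported symmetric anti-divergence, which you did identify.
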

 Lemma~\ref{lem:mainLem} now follows by combining Lemmas~\ref{lem:glueLem} and \ref{lem:convexInt} as explained in \cite[Section 3]{isettOnsag}.  (Here Lemma~\ref{lem:glueLem} is applied with $(v_0, p_0, R_0)$ taken to be the $(v,p,R)$ given in the assumptions of Lemma~\ref{lem:mainLem}.)  The only important difference in the present case is that we have removed the assumption $N \geq \Xi^\eta$ and the constants $\hc$ and $C_L$ (which can be set equal if desired) do not depend on $\eta$.

We now explain how to prove Lemma~\ref{lem:convexInt} by modifying the proof of \cite[Lemma 3.3]{isettOnsag}.

\section{Modifying the Convex Integration} \label{sec:modifyConvexInt}
We now proceed with the proof of Lemma~\ref{lem:convexInt}.  The construction will be based on the proof of \cite[Lemma 3.3]{isettOnsag} implementing convex integration with the Mikado flows of \cite{danSze}, but modified to adapt the localization strategy of \cite{IOnonpd} to our setting.  %.  To improve the Lemma so that we avoid the double-exponential growth of frequencies, we incorporate the strategy and techniques of \cite{IOnonpd} where the same goal is accomplished for regularity below $1/5$ in a construction based on Beltrami flows.  The key points in \cite{IOnonpd} are to localize the waves in the convex integration scheme {\it in space} to appropriately small length scales, and furthermore to introduce spatially localized operators for solving the symmetric divergence equation as part of implementing the parametrix technique (see Section~\ref{sec:solveSymmdiv} below).  %To successfully incorporate these techniques to our setting, we introduce the device of ``threading'' Mikado flows (see Section~\ref{sec:threadingMikadoFlows}).
%As we now show, these techniques can be suitably adapted to our setting.

Let $(v,p,R)$, $R = \sum_I R_I$ be given as in the assumptions of Lemma~\ref{lem:convexInt}, which are the conclusions of Lemma~\ref{lem:glueLem}.  We will use the symbol $\lsm$ to denote inequalities involving explicit constants that are allowed to depend on the parameters $C_1, \de$ and $C_\de$, but never on $(\Xi, e_v, e_R), N, \th, \hxi,$ etc.  

We obtain the new Euler-Reynolds flow $(v_1, p_1, R_1)$ of Lemma~\ref{lem:convexInt} by adding carefully designed corrections $v_1^\ell = v^\ell + V^\ell$, $p_1 = p + P$ to the velocity and pressure, and using the resulting equation for $(v_1, p_1)$ to construct the appropriate $R_1$.  The correction $V^\ell$ will be a sum of divergence free, high frequency vector fields indexed by a set $\JJ$ 
\ALI{
V^\ell &= \sum_{J \in {\cal J}} V_J^\ell, \qquad \nb_\ell V_J^\ell = 0, \qquad \mbox{ for all } J \in \JJ
}
The index $J \in \JJ$ will have several components $J = (I, J_1, J_2, J_3, f)$ that together specify the time interval and spatial location in which $V_J$ will be supported as well as the direction in which $V_J$ takes values.  Specifically, we choose an even integer $\Pi \in [3 \Xi, 6 \Xi] \cap 2 \Z$ of size comparable to $\Xi$ and define %the index set  %and define for each $(J_1, J_2, J_3) \in (\Z / \Pi \Z )^3$ the position $x_0(J_1, J_2, J_3) = \Pi^{-1}(J_1, J_2, J_3) \in (\R / \Z)^3$.  The index set will be
\ALI{
\JJ &:= \Z \times (\Z/\Pi \Z)^3 \times \F \\
\F &:= \{ e_i \pm e_j : 1 \leq i < j \leq 3 \},
}
Each $V_J$, $J = (I, J_1, J_2, J_3, f)$, will be supported in a time interval of length $\sim \th$ around time $t(I)$, and initially at time $t(I)$ will be supported in a ball of size $\sim \Xi^{-1}$ around the point $x_0(J) := \Pi^{-1}(J_1, J_2, J_3) \in (\R / \Z)^3$.  The component $f \in \F$ specifies which of the $\# \F = 6$ directions in $\R^3$ in which $V_J^\ell$ approximately takes values.  

As in \cite[Section 12]{isett}, let $v_\ep = \eta_\ep \ast v$ be the coarse scale velocity field obtained by mollification in space at scale $\ep$.  Let $\Phi_s : \R \times \R \times \T^3 \to \R \times \T^3$ be the {\bf coarse scale flow} (the flow map of $v_\ep$)
\ali{ \label{eq:flowMap}
\begin{split}
\Phi_s(t,x) = (t+s, \Phi_s^i(t,x)), \quad 
\fr{d}{ds} \Phi_s^i(t,x) = v_\ep^i(\Phi_s(t,x)), \quad 
\Phi_0(t,x) = (t,x),
\end{split}
}
and let $\Ga_I : \R \times \T^3 \to \T^3$ be the {\bf back-to-labels map} associated to $v_\ep$ from the initial time $t(I)$
\ali{
\begin{split}
(\pr_t + v_\ep^i \nb_i) \Ga_I(t,x) &= 0 \\
\Ga_I(t(I), x) &= x.
\end{split}
}
We also define the coarse scale advective derivative $\Ddt := (\pr_t + v_\ep \cdot \nb)$.

To localize the waves $V_J$, we construct a smooth, quadratic partition of unity initiating from each time $t(I)$ that follows the flow of $v_\ep$ and has length scale $\sim \Xi^{-1}$.  The elements of this partition of unity are functions $\chi_{(I,[k])} : \R \times \T^3 \to \R$ that are indexed by $(I, [k]) \in \Z \times (\Z / \Pi \Z)^3$, and they satisfy
\ali{
\sum_{[k] \in (\Z / \Pi \Z)^3} \chi_{(I,[k])}^2(t,x) &= 1, \qquad \mbox{ for all } I \in \Z, (t,x) \in \R \times \T^3 \label{eq:quadPart} \\
\Ddt \chi_{(I,[k])}(t,x) &= 0, \qquad \mbox{ for all } (I,[k]) \in \Z \times (\Z / \Pi \Z)^3, (t,x) \in \R \times \T^3. \label{eq:transportPartUnity}
}
To construct the initial data for the partition of unity, choose a smooth $\bar{\chi} : \R^3 \to \R$ with support in $[-3/4, 3/4]^3$ such that $\sum_{m \in \Z^3} \bar{\chi}^2(h - m) = 1$ for all $h \in \R^3$, then periodize and rescale to define
\ali{
\chi_{(I,[k])}(t(I), x) &:= \sum_{m \in \Z^3} \bar{\chi}(\Pi x - [k] - \Pi m) . \label{eq:rescalingInit}
}
Observe that $\chi_{(I,[k])}(t(I), x)$ does not depend on how we represent the equivalence classes of $x \in (\R / \Z)^3$ or $[k] \in (\Z / \Pi \Z)^3$, and that \eqref{eq:quadPart} holds at time $t(I)$.  The same identity holds for all time $t \in \R$ by \eqref{eq:transportPartUnity} and uniqueness of solutions to the transport equation.  Observe also, since $3 \Xi \leq \Pi \leq 6 \Xi$, that the initial data for $\chi_{(I,[k])}(t(I), \cdot)$ is supported in a ball of radius $\Xi^{-1}$ around $\Pi^{-1} [k]$ in $(\R / \Z)^3$, and satisfies estimates of the form $\co{\nb_{\va}\chi_{(I,[k])}(t(I), \cdot)} \lsm_{|\va|} \Xi^{|\va|}$.

\subsection{Localizing the Convex Integration Construction} \label{sec:threadingMikadoFlows}
Unlike the scheme in \cite{isettOnsag}, our scheme will involve many Mikado flow based waves at any given time that are supported within overlapping regions.  In general, interference between overlapping Mikado flows would produce error terms that cannot be controlled for the iteration.  We avoid this interference by ``threading'' the Mikado flows together so that, at the initial time, the main terms of the waves $V_J$ will have disjoint support.  The support then remains disjoint as the Mikado flows are advected along the coarse scale flow.  

To accomplish this construction, let $f \in \F$ and let $[k] \in (\Z / 2 \Z)^3$.  Choose an $r_0 > 0$ and choose disjoint, periodic lines $\ell_{(f,[k])} = \{ p_{(f,[k])} + t f : t \in \R \}$ that are separated from each other by a distance greater than $6 r_0$ in the torus $(\R / \Z)^3$.  Choose smooth functions $\psi_{(f,[k])} : \T^3 \to \R$ of the form $\psi_{(f,[k])}(X) = g(\mbox{dist}(X, \ell_{(f,[k])} ))$, $\supp g(\cdot) \subseteq [r_0/2, r_0]$, such that
\ali{
\int_{\T^3} \psi_{(f,[k])}(X) dX = 0, \qquad  \int_{\T^3} \psi_{(f,[k])}^2(X) dX = 1. \label{eq:intZero}
}
With these choices, the functions $\psi_{(f,[k])}$ have disjoint support and have gradients orthogonal to $f$:
\ali{
\nb_\ell \psi_{(f,[k])}(X) f^\ell &= 0  \label{eq:orthog} \\
\supp \psi_{(f,[k])} \cap \supp \psi_{(\tilde{f},[\tilde{k}])} &= \emptyset \qquad \mbox{ if } f \neq \tilde{f} \mbox{ or } [k] \neq [\tilde{k}] \mbox{ in } (\Z / 2 \Z)^3. 
}
Conditions \eqref{eq:orthog} and \eqref{eq:intZero} imply that $\psi_{(f,[k])}(X) f^\ell$ is divergence free with mean zero, which implies that there is\footnote{We can take for instance $\Om_{(f,[k])}^{\a \b} = \nb^\a \De^{-1}[ \psi_{(f,[k])} f^\b ] - \nb^\b \De^{-1}[ \psi_{(f,[k])} f^\a ].$} a smooth tensor field $\Om_{(f,[k])}^{\a \b} : \T^3 \to \R^3 \otimes \R^3$ that is anti-symmetric in $\a \b$ and satisfies
\ali{
\nb_\a \Om_{(f,[k])}^{\a \b}(X) = \psi_{(f,[k])}(X) f^\b, \qquad \int_{\T^3} \Om_{(f,[k])}^{\a \b}(X) dX  = 0 \mbox{ for all } 1 \leq \a, \b \leq 3.
}
Since all components of the $\Om_{(f,[k])}^{\a \b}$ have mean $0$ on the torus, we can further construct tensor fields $\wtld{\Om}_{(f,[k])}^{\a \b \ga} : \T^3 \to \R^3 \otimes \R^3 \otimes \R^3$, also anti-symmetric in $\a \b$, such that
\ali{
\nb_\ga \wOm^{\a \b \ga}_{(f,[k])}(X) = \Om_{(f,[k])}^{\a \b}(X), \qquad  \int_{\T^3} \wOm_{(f,[k])}^{\a \b\ga}(X) dX  = 0 \mbox{ for all } 1 \leq \a, \b, \ga \leq 3.
}
For example, we can take $\wOm^{\a\b\ga}_{(f,[k])} := \nb^\ga \De^{-1} \Om_{(f,[k])}^{\a\b}$.  These second order potentials will be used to impose local conservation of angular momentum similar to the use of double-curl form waves in \cite{IOnonpd}.  

For $J = (I, J_1, J_2, J_3, f)$, let $[J] := [(J_1, J_2, J_3)]$.  We define the corrections $V_J^\ell$ to have the form
\ali{
\begin{split} \label{eq:formOfCorrection}
V_J^\ell &= \VR_J^\ell + \de V_J^\ell, \qquad
%\VR_J^\ell &= v_J^\ell \psi_J(t,x), \quad 
\VR_J^\ell = v_J^\ell \psi_J(t,x) \\
\psi_J(t,x) &:= \psi_{(f,[J])}(\la \Ga_I(t,x) ).
\end{split}
}
The amplitudes $v_J^\ell$ have the same form as in \cite[Section 13]{isettOnsag} except incorporating the partition of unity $\chi_J$.  In particular, they take values orthogonal to the gradient of the oscillatory functions $\psi_J$:
\ali{
v_J^\ell &= \chi_J [ e_I^{1/2}(t) \ga_{(I,f)}(t,x) (\nb \Ga_I^{-1})_a^\ell f^a ] \label{eq:amplitudeFormula} \\
\suppt e_I^{1/2}(t) &\subseteq [t(I) - \th, t(I) + \th] \label{eq:timeCutoffSupp}\\
\chi_J(t,x) &= \chi_{(I,[J_1, J_2, J_3])}(t,x), \quad J = (I, J_1, J_2, J_3, f) \\
v_J^\ell \nb_\ell \psi_J &= 0. \label{eq:orthogonality}
}
Note in particular that by construction the main terms of each wave have disjoint supports
\ali{
\supp \VR_J \cap \supp \VR_K &= \emptyset, \qquad \mbox{ if } J \neq K. \label{eq:disjointSupps}
}
Indeed, if $J = (J_0, J_1, J_2, J_3, f)$ and $K = (K_0, K_1, K_2, K_3, f')$ are not equal and $J_0 \neq K_0$, then $V_J^\ell$ and $V_K^\ell$ live on different time intervals.  If $J_0 = K_0 = I$, one has either $f \neq f'$ or $(J_1, J_2, J_3) \neq (K_1, K_2, K_3)$ mod $2$, either case implying $\supp \psi_J \cap \supp \psi_K = \emptyset$, or $f = f'$ and $(J_1, J_2, J_3) = (K_1, K_2, K_3)$ mod $2$.  In the last case, one has $\supp \chi_J \cap \supp \chi_K = \emptyset$ unless $J = K$.

% live on different time intervals, or $J_0 = K_0 = I$, but $f \neq f'$ or 

%or $J_0 = K_0 = I$ and $\supp e_I^{1/2} \chi_J \cap \supp e_I^{1/2} \chi_K = \emptyset$, or $f \neq f'$, or 

The correction $V_J^\ell$ is made to be divergence free and to have the form \eqref{eq:formOfCorrection} by making $V_J^\ell$ the divergence of an antisymmetric tensor built from the Lie transport of the potentials $\wOm_{(f,[k])}^{\a \b\ga}$ above:
\begin{comment}
\ali{
V_J^\ell &= \la^{-1} \nb_a[ \chi_J (\nb \Ga_I^{-1})^a_\a (\nb \Ga_I^{-1})_\b^\ell e^{1/2}_I(t) \ga_{(I,f)} \Om_J^{\a \b} ] \\
\Om_J^{\a\b}(t,x) &:= \Om_{(f,[J_1, J_2, J_3])}^{\a \b}(\la \Ga_I) \\
\de V_J^\ell &= \de v_{J, \a \b}^\ell \Om_J^{\a\b} \\
\de v_{J, \a \b}^\ell &:= \la^{-1} \nb_a[ \chi_J (\nb \Ga_I^{-1})^a_\a (\nb \Ga_I^{-1})_\b^\ell e^{1/2}_I(t) \ga_{(I,f)} ]
}
\end{comment}
\ali{
V_J^\ell &= \la^{-2} \nb_a\nb_c[ \chi_J (\nb \Ga_I^{-1})^a_\a (\nb \Ga_I^{-1})_\b^\ell(\nb \Ga_I^{-1})_\ga^c  e^{1/2}_I(t) \ga_{(I,f)} \wOm_J^{\a \b\ga} ]  \label{eq:doubleDivForm}\\
\de V_J^\ell &= \de v_{J, \a \b}^\ell \Om_J^{\a\b} + \de v_{J,\a\b\ga}^\ell \wOm_J^{\a \b \ga} \\
\begin{split}\label{eq:OmJabForm} 
\Om_J^{\a\b}(t,x) &:= \Om_{(f,[J_1, J_2, J_3])}^{\a \b}(\la \Ga_I) \\
 \wOm_J^{\a \b \ga}(t,x) &:= \wOm_{(f,[J_1, J_2, J_3])}^{\a \b\ga}(\la \Ga_I) 
\end{split} \\
%\end{split}
\begin{split}
\de v_{J, \a \b}^\ell &:= \la^{-1} \nb_a[ \chi_J (\nb \Ga_I^{-1})^a_\a (\nb \Ga_I^{-1})_\b^\ell e^{1/2}_I(t) \ga_{(I,f)} ] 
\label{eq:devJabForm}\\
\de v_{J, \a \b \ga}^\ell &:= \la^{-2} \nb_a \nb_c[ \chi_J (\nb \Ga_I^{-1})^a_\a (\nb \Ga_I^{-1})_\b^\ell(\nb \Ga_I^{-1})_\ga^c  e^{1/2}_I(t) \ga_{(I,f)} ].
\end{split}
}
Note that the main term $\VR_J^\ell$ in \eqref{eq:formOfCorrection}-\eqref{eq:amplitudeFormula} appears when the derivatives $\nb_a \nb_c$ both fall on $\wOm_J^{\a \b \ga}$.  Since $V_J^\ell$ has the form $V_J^\ell = \nb_a W_J^{a\ell}$, where $W_J^{a\ell}$ is anti-symmetric in $a\ell$, we have that $V_J^\ell$ is divergence free.

The amplitudes constructed here are related to those constructed in \cite[Section 13]{isettOnsag} (which are indexed by $(I, f) \in \Z \times \F$ and do not involve spatial cutoffs) by the formula%s
\ali{
v_J^\ell &= \chi_J v_{(I, f)}^\ell, \qquad J = (I, J_1, J_2, J_3, f). \label{eq:relatedFormula} %\\
%\de v_{J, \a \b}^\ell &= \chi_J \de v_{(I,f), \a \b}^\ell + \la^{-1} ...
}
This comparison allows us to see that the parameter $\ep = \ep_v$ in the mollification of $v \mapsto v_\ep$ can be chosen to have the same value $\ep_v = c_v N^{-1/2} \Xi^{-1}$ as in \cite[Section 16]{isettOnsag}, which is based on the requirement %that
\ali{
\co{ v - v_\ep } \max_J \co{|v_J| |\psi_J| }%\left\co{\sum_J |v_J^\ell| \right} \max_J |\psi_J| 
&\leq (\log \hxi)^{1/2} \fr{e_v^{1/2} e_R^{1/2}}{500 N}. \label{eq:requireEpv}
}
%Namely, since at most $2^3$ amplitudes $v_J$ have overlapping supports, the left hand side of \eqref{eq:requireEpv} is now larger by at most a factor $2^3$, so we may achieve \eqref{eq:requireEpv} by adjusting the value of $\ep_v$ by a constant factor.

Since we have chosen the same parameter in the mollification $v \mapsto \ep_v$ as that chosen in \cite{isettOnsag}, we obtain the same estimates for $v_\ep$
\ali{
\co{\nb_{\va} v_\ep} &\lsm_{|\va|} N^{(|\va| - 2)_+/2} \Xi^{|\va|} e_v^{1/2} \quad \mbox{ if } |\va| \geq 1, \label{eq:vepBds}
}
where the implicit constant is equal to $1$ for $|\va| = 1$.  From this fact we will see in the following Section~\ref{sec:componentBounds} that all the remaining estimates for the components of the construction coincide with those in the proof of \cite[Lemma 3.3]{isettOnsag}.

\section{Estimates for Components of the Construction} \label{sec:componentBounds}
Here we summarize the estimates for the components of the construction, which coincide with those of \cite{isettOnsag}.  The following elementary Lemma will be convenient:
\begin{lem} \label{lem:elementaryLem} For $u \geq 0$, integer $M \geq 0$ and for $g : \T^3 \to \R$ define (for $N \geq 1, \Xi > 0$)
\ali{
H_{M,u}[g] &:= \max_{0 \leq |\va| \leq M} \fr{\co{\nb_{\va} g}}{N^{(|\va| - u)_+/2} \Xi^{|\va|} } \label{eq:weightedNormDef}
}
Then for $\la \geq N^{1/2} \Xi$, we have for any first order partial derivative $\nb_a$
\ali{
\begin{split}
H_{M,u}[\la^{-1} \nb_a g] \leq H_{M+1, u}[g], \\
 H_{M,u}[\Xi^{-1} \nb_a g] \leq H_{M+1,u+1}[g]
\end{split}
}
We also have the triangle inequality $H_{M,u}[g_{(1)} + g_{(2)}] \leq H_{M,u}[g_{(1)}] + H_{M,u}[g_{(2)}]$ and product estimate
\ali{
H_{M,u}[g_{(1)}g_{(2)}] \lsm_M H_{M,u}[g_{(1)}] H_{M,u}[g_{(2)}]. \label{eq:productBound}
}
\end{lem}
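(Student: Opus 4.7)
The plan is to unfold the definition of $H_{M,u}[\cdot]$ and verify each of the four assertions by elementary comparisons of the weights $N^{(|\va|-u)_+/2}\Xi^{|\va|}$. None of the four inequalities requires analysis beyond the triangle inequality for $C^0$, the Leibniz rule, and a bit of bookkeeping on exponents, so the writeup should be short and entirely combinatorial.

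For the first two inequalities, I would fix a multi-index $\va$ with $|\va| \leq M$ and set $\vec{b} = (\va, a)$, which has $|\vec{b}| = |\va|+1$. Then
\[
\frac{\|\nb_{\va}(\la^{-1}\nb_a g)\|_{C^0}}{N^{(|\va|-u)_+/2}\Xi^{|\va|}}
= \frac{\la^{-1}\,\|\nb_{\vec b} g\|_{C^0}}{N^{(|\va|-u)_+/2}\Xi^{|\va|}},
\]
and the first claim reduces to the inequality $\la^{-1} N^{(|\vec b|-u)_+/2}\Xi^{|\vec b|} \leq N^{(|\va|-u)_+/2}\Xi^{|\va|}$, i.e.\ $N^{((|\vec b|-u)_+ - (|\va|-u)_+)/2}\Xi \leq \la$. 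Since $|\vec b| - |\va| = 1$ forces $(|\vec b|-u)_+ - (|\va|-u)_+ \in \{0,1\}$, this follows from $\la \geq N^{1/2}\Xi$. The second claim is the analogous check with $\la^{-1}$ replaced by $\Xi^{-1}$ and $u$ replaced by $u+1$; a direct exponent comparison using $(|\vec b|-(u+1))_+ = (|\va|-u)_+$ makes the inequality an equality, so it holds with no loss.

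The triangle inequality is immediate from the triangle inequality for the $C^0$ norm applied inside the max. For the product estimate, I would apply the Leibniz rule,
\[
\nb_{\va}(g_{(1)} g_{(2)}) = \sum_{\vec b + \vec c = \va} \binom{\va}{\vec b}\, \nb_{\vec b} g_{(1)}\, \nb_{\vec c} g_{(2)},
\]
bound each factor by $\|\nb_{\vec b} g_{(i)}\|_{C^0} \leq N^{(|\vec b|-u)_+/2}\Xi^{|\vec b|} H_{M,u}[g_{(i)}]$, and reduce matters to the exponent inequality
\[
(|\vec b|-u)_+ + (|\vec c|-u)_+ \leq (|\va|-u)_+ \qquad \text{whenever } \vec b + \vec c = \va,
\]
which is verified by checking the cases according to how $|\vec b|$ and $|\vec c|$ compare to $u$ (in all cases one either gets $0 \leq (|\va|-u)_+$ or cancels $u$'s against $|\vec b|+|\vec c| = |\va|$). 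The combinatorial prefactor $\sum_{\vec b + \vec c = \va}\binom{\va}{\vec b}$ depends only on $M$, yielding the implicit constant $C_M$.

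I do not anticipate any obstacle; the only minor care point is the case analysis for the product estimate exponent inequality, which must cover the four subcases depending on whether $|\vec b|, |\vec c|$ fall above or below $u$. Once this is dispatched, all four assertions follow mechanically and the lemma is proved.
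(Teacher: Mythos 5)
Your proof is correct and follows essentially the same route as the paper's: it reduces each inequality to a comparison of the weights $N^{(|\va|-u)_+/2}\Xi^{|\va|}$, and for the product estimate uses the Leibniz expansion together with the key exponent inequality $(|\va_1|-u)_+ + (|\va_2|-u)_+ \leq (|\va|-u)_+$ for $|\va_1|+|\va_2|=|\va|$, exactly as in the paper. (A cosmetic point: the paper uses ordered-sequence multi-indices, so the Leibniz coefficients are generic $c_{\va_1,\va_2}$ rather than the multinomial $\binom{\va}{\vec b}$, but this does not affect the argument.)
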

All the properties follow quickly from the definition \eqref{eq:weightedNormDef}.  Inequality \eqref{eq:productBound} follows from the expansion $\nb_{\va}(g_{(1)} g_{(2)} ) = \sum_{|\va_1| + |\va_2| = |\va|} c_{\va_1, \va_2} \nb_{\va_1} g_{(1)} \nb_{\va_2} g_{(2)}$, the bound $\co{\nb_{\va_i} g_{(i)}} \leq N^{(|\va_i|-u)_+/2} \Xi H_{M,u}[g_{(i)}]$ and the inequality $(|\va_1| - u)_+ + (|\va_2| - u)_+ \leq (|\va|-u)_+$.

\begin{comment}
\begin{lem} \label{lem:elementaryLem}  For $u \geq 0$, if $g_{(1)}, \ldots, g_{(L)}$ are real-valued functions on $\T^3$ satisfying 
\ali{
\co{\nb_{\va} g_{(i)} } &\leq N^{(|\va| - u)_+/2} \Xi^{|\va|}  H_i, \qquad \mbox{ for all } 0 \leq |\va| \leq M \label{eq:boundForgi}
}
the product $g = \prod_{i=1}^L g_{(i)}$ satisfies a bound of the form
\ali{
\co{\nb_{\va} g } &\leq C_{M, L} N^{(|\va| - u)_+/2} \Xi^{|\va|} \prod_{i=1}^L H_i, \qquad \mbox{ for all } 0 \leq |\va| \leq M \label{eq:productBound}
}
Also, if $M \geq 1$ and $\la \geq N^{1/2} \Xi$, the function $\la^{-1} \nb_a g_{(i)}$ satisfies \eqref{eq:boundForgi} for all $0 \leq |\va| \leq M - 1$.
\end{lem}
Inequality \eqref{eq:productBound} follows by induction on $L$ from the case $L = 2$, which itself is a straightforward consequence of the product rule $\nb_{\va}(g_{(1)} g_{(2)} ) = \sum_{|\va_1| + |\va_2| = |\va|} c_{\va_1, \va_2} \nb_{\va_1} g_{(1)} \nb_{\va_2} g_{(2)}$ and the inequality $(|\va_1| - u)_+ + (|\va_2| - u)_+ \leq (|\va|-u)_+$.  The remark after \eqref{eq:productBound} follows directly from \eqref{eq:boundForgi}.
\end{comment}

The estimates for the construction may now be summarized as follows.  Here we use the fact that the frequency $\la := B_\la N \Xi$ is larger than $N^{1/2} \Xi$ to conclude that the lower order terms $\de v_{J, \a \b \ga}^\ell$ obey the same bounds as the $\de v_{J, \a \b}^\ell$.  
\begin{prop}  The following bounds hold with constants depending only on $|\va|$
\ali{
\co{\nb_{\va} \ga_{(I,f)} } + \co{\nb_{\va} (\nb \Ga_I^{-1}) } &\lsm N^{(|\va| - 1)_+/2} \Xi^{|\va|} \label{eq:gaIbounds} \\
\co{\nb_{\va} \Ddt \ga_{(I,f)} } +  \co{\nb_{\va} \Ddt (\nb \Ga_I^{-1}) }&\lsm \plhxi^2 \Xi e_v^{1/2} N^{(|\va| - 1)_+/2} \Xi^{|\va|} \label{eq:DdtbackmapBds} \\
\sup_{t \in \R} \left(e_I^{1/2}(t) + \th \left| \pr_t e_I^{1/2}(t) \right| \right) &\lsm \plhxi^{1/2} e_R^{1/2} \label{eq:timeCutoffBds} \\
%\co{\pr_t e_I^{1/2}(t)} &\lsm \plhxi^{5/2} \Xi e_v^{1/2} e_R^{1/2} \\
%}
%\ali{
%\co{\nb_{\va} (\nb \Ga_I^{-1}) } &\lsm  N^{(|\va| - 1)_+/2} \Xi^{|\va|} \\
%\co{\nb_{\va} \Ddt (\nb \Ga_I^{-1}) } &\lsm  N^{(|\va| - 1)_+/2} \Xi^{|\va| + 1} e_v^{1/2} \label{eq:DdtbackmapBds}\\
\co{ \nb_{\va} \chi_J} &\lsm N^{(|\va|-1)_+/2} \Xi^{|\va|} \label{eq:chiJ} \\
\co{ \nb_{\va} v_J^\ell } &\lsm \plhxi^{1/2} N^{(|\va| - 1)_+/2} \Xi^{|\va|} e_R^{1/2} \label{eq:amplitudeBounds} \\
\co{ \nb_{\va} \Ddt v_J^\ell } &\lsm \plhxi^{5/2} N^{(|\va| - 1)_+/2} \Xi^{|\va|} e_R^{1/2} \\
\co{ \nb_{\va} \de v_{J,\a\b}^\ell } + \co{ \nb_{\va} \de v_{J,\a\b\ga}^\ell } &\lsm \la^{-1} \plhxi^{1/2} N^{|\va|/2} \Xi^{1 + |\va|} e_R^{1/2} \label{eq:lowerOrderAmpbds} \\
\co{ \nb_{\va} \Ddt \de v_{J,\a\b}^\ell } + \co{ \nb_{\va} \Ddt \de v_{J,\a\b\ga}^\ell } &\lsm \la^{-1} \plhxi^{5/2} N^{|\va|/2} \Xi^{|\va| + 2} e_v^{1/2} e_R^{1/2} \label{eq:lowOrderDtAmpbds}
}
\end{prop}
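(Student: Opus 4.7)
My plan is to establish the eight estimates by proceeding from the simplest transport-based quantities outward to the composite amplitudes, closely mirroring the analogous computation in \cite[Sections 13, 16]{isettOnsag} while keeping careful track of the localizations $\chi_J$ introduced here. Throughout I would use Lemma~\ref{lem:elementaryLem} to turn compositions of derivative bounds into product bounds in the weighted norm $H_{M,u}$; since each factor appearing in $v_J^\ell$ or $\de v_{J,\a\b}^\ell$ has $u = 1$ (i.e.\ the first derivative is free but higher derivatives cost $N^{1/2}$), the hypothesis $\la \geq N^{1/2} \Xi$ plus \eqref{eq:productBound} gives the scaling displayed on the right-hand side of \eqref{eq:amplitudeBounds}-\eqref{eq:lowOrderDtAmpbds} once a uniform bound on each factor is known.

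First I would prove the back-to-labels estimates \eqref{eq:gaIbounds}-\eqref{eq:DdtbackmapBds}. The map $\Ga_I$ solves $\Ddt \Ga_I = 0$, so $\nb \Ga_I$ satisfies $\Ddt (\nb_i \Ga_I^k) = -\nb_i v_\ep^j \, \nb_j \Ga_I^k$ and $\nb \Ga_I^{-1}$ satisfies an analogous matrix equation. Iterating the identity $[\nb_{\va}, \Ddt] = \sum_{|\vec{b}| + |\vec{c}| = |\va|,\, |\vec{b}| \geq 1} c_{\vec{b}\vec{c}} \nb_{\vec{b}} v_\ep \cdot \nb \nb_{\vec{c}}$ along time intervals of length $\lsm \th$ and plugging in \eqref{eq:vepBds} gives \eqref{eq:gaIbounds}; the hypothesis \eqref{eq:b0bd} ensures $|\th| \co{\nb v_\ep} \lsm 1$, so the Gronwall-type accumulation contributes only a harmless constant. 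For \eqref{eq:DdtbackmapBds}, the base case $|\va| = 0$ reads $\co{\Ddt \nb \Ga_I^{-1}} \lsm \co{\nb v_\ep} \lsm \Xi e_v^{1/2}$, and the logarithmic factor $\plhxi^2$ comes in only upon combining with the amplitude $\ga_{(I,f)}$ in subsequent steps (whose derivation from $R_I$ through the $r \ga^2 = e_R - \text{stress}$ construction of \cite[Section 13]{isettOnsag} carries over verbatim, using the $R_I$ bounds \eqref{eq:newRIbdGlue}-\eqref{eq:newDtRIbdGlue}). The time cutoff bounds \eqref{eq:timeCutoffBds} are immediate from the choice of $e_I^{1/2}(t)$ with support in the $\th$-interval \eqref{eq:timeCutoffSupp} and magnitude $\lsm \plhxi^{1/2} e_R^{1/2}$, since $\pr_t$ of such a cutoff costs $\th^{-1} \lsm \plhxi^2 \Xi e_v^{1/2}$ by \eqref{ineq:thBound}.

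Next I would handle the partition of unity bounds \eqref{eq:chiJ}. From \eqref{eq:rescalingInit} the initial data $\chi_{(I,[k])}(t(I), \cdot)$ is the rescaling by $\Pi \in [3\Xi, 6\Xi]$ of a fixed function, giving $\co{\nb_{\va} \chi_{(I,[k])}(t(I), \cdot)} \lsm \Xi^{|\va|}$. Transporting by \eqref{eq:transportPartUnity} gives $\chi_J(t,x) = \chi_J(t(I), \Ga_I(t,x))$, so chain-ruling $\nb_{\va}$ through $\Ga_I$ and invoking \eqref{eq:gaIbounds} yields \eqref{eq:chiJ} with the extra $N^{1/2}$-per-derivative loss at order $\geq 2$ absorbed into the $N^{(|\va|-1)_+/2}$ factor. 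The amplitude bound \eqref{eq:amplitudeBounds} then follows by writing $v_J^\ell$ per \eqref{eq:amplitudeFormula} as a product of $\chi_J$, $e_I^{1/2}(t)$, $\ga_{(I,f)}$ and $(\nb \Ga_I^{-1})^\ell_a f^a$, each of which satisfies a bound of the form $H_{M,1}[\cdot] \lsm 1$ except for $e_I^{1/2}$ which contributes the $\plhxi^{1/2} e_R^{1/2}$ prefactor; applying \eqref{eq:productBound} closes the estimate. The advective derivative bound uses $\Ddt \chi_J = 0$, the estimates \eqref{eq:DdtbackmapBds} on $\Ddt (\nb \Ga_I^{-1})$ and $\Ddt \ga_{(I,f)}$ (these last coming from the derivation in \cite{isettOnsag} using the mollified material derivative of $R_I$), together with the $\pr_t e_I^{1/2}$ bound from \eqref{eq:timeCutoffBds}; each such $\Ddt$-factor contributes the additional $\plhxi^2 \Xi e_v^{1/2}$, which combines with $\plhxi^{1/2} e_R^{1/2}$ to give the claimed $\plhxi^{5/2}$ growth.

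Finally for \eqref{eq:lowerOrderAmpbds}-\eqref{eq:lowOrderDtAmpbds} I would apply the operators $\la^{-1} \nb_a$ and $\la^{-2} \nb_a \nb_c$ to the bracketed expressions in \eqref{eq:devJabForm}; since $\la \geq N^{1/2} \Xi$, each application of $\la^{-1} \nb_a$ advances one step in $H_{M,u}$ without increasing $u$ via Lemma~\ref{lem:elementaryLem}, producing one free derivative at the cost of shifting the exponent in $N^{(|\va|-1)_+/2}$ to $N^{|\va|/2}$ and introducing the $\la^{-1}$ prefactor. Hence the second-potential term $\de v_{J,\a\b\ga}^\ell$, involving $\la^{-2} \nb_a \nb_c$, obeys bounds of the same form as $\de v_{J,\a\b}^\ell$ up to an extra $\la^{-1} N^{1/2} \Xi = O(1)$, which is harmless. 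The $\Ddt$ version \eqref{eq:lowOrderDtAmpbds} follows identically, using $[\Ddt, \nb_a] = -\nb_a v_\ep^j \nb_j$ with \eqref{eq:vepBds} to handle the commutator and extracting one $\plhxi^2 \Xi e_v^{1/2}$ factor from whichever component receives $\Ddt$. The main technical obstacle throughout is bookkeeping of the $[\Ddt, \nb_{\va}]$ commutators and verifying that the loss $\co{\nb_{\vec{b}+1} v_\ep} \lsm N^{(|\vec{b}|-1)_+/2} \Xi^{|\vec{b}|+1} e_v^{1/2}$ is compatible with the $u = 1$ scaling in $H_{M,1}$, but this is exactly the purpose of the choice $\la \geq N^{1/2} \Xi$ and is already confirmed by the structure of Lemma~\ref{lem:elementaryLem}.
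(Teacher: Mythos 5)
Your proposal is essentially sound and tracks the paper's proof closely in overall structure (establish bounds on the transport quantities and cutoffs, then combine via Lemma~\ref{lem:elementaryLem} and the product rule to control $v_J^\ell$ and the lower-order terms $\de v_{J,\a\b}^\ell$, $\de v_{J,\a\b\ga}^\ell$). The one genuinely different step is your treatment of \eqref{eq:chiJ}. You propose to write $\chi_J(t,x) = \chi_J(t(I), \Ga_I(t,x))$ and chain-rule through the back-to-labels map, while the paper instead applies a transport energy estimate from \cite[Proposition~17.4]{isett}: it considers the weighted quantity $E_M[\chi_J](t,x) := \sum_{0 \leq |\va| \leq M} \Xi^{-2|\va|} N^{-(|\va|-1)_+}|\nb_{\va}\nb\chi_J|^2$, propagates it by a Gronwall-type inequality along the coarse flow $\Phi_s$ using $\Ddt\chi_J = 0$, and evaluates the initial data at $t(I)$. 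Both routes are valid; the paper's avoids composing with $\Ga_I$ and keeps the derivation self-contained within the $E_M$ framework already in use for $\nb\Ga_I^{-1}$.

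One small gap to be aware of in your chain-rule version: the bound you cite, \eqref{eq:gaIbounds}, controls $\nb_{\va}(\nb\Ga_I^{-1})$, i.e.\ the derivatives of the \emph{inverse} Jacobian matrix, whereas Fa\`a di Bruno applied to $\bar\chi_J\circ\Ga_I$ requires bounds on $\nb_{\va}\Ga_I$ themselves, equivalently on derivatives of the Jacobian matrix $\nb\Ga_I$. These are of course inverse matrices and $\det(\nb\Ga_I) = 1$, so $\nb\Ga_I = \mathrm{adj}(\nb\Ga_I^{-1})$ is a quadratic polynomial in the entries of $\nb\Ga_I^{-1}$, and the needed bounds follow from \eqref{eq:gaIbounds} by the product rule (or by rerunning the same Gronwall argument with the roles swapped). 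You should make that bridge explicit, since as written the citation to \eqref{eq:gaIbounds} does not literally give what the chain rule consumes. Modulo that caveat, the Fa\`a di Bruno computation does yield the stronger exponent $N^{(|\va|-2)_+/2}$ (and hence \eqref{eq:chiJ}), exactly as in the paper.

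A final, minor remark of presentation: the paper does not reprove \eqref{eq:gaIbounds}--\eqref{eq:timeCutoffBds} but simply cites \cite[Section~17.1]{isettOnsag}; your sketch via iterated commutators and Gronwall is consistent with what happens there and is not in conflict, but it is not needed for this Proposition and is more work than the paper commits to.
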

\begin{proof}  Inequalities \eqref{eq:gaIbounds}-\eqref{eq:timeCutoffBds} follow from the bounds in \cite[Section 17.1]{isettOnsag}.  %(with better bounds for $\co{\nb_{\va} \ga_{(I,f)} }$ and $\co{\nb_{\va} \Ddt (\nb \Ga_I^{-1}) }$).  
Inequality \eqref{eq:chiJ} for $|\va| = 0$ follows from the maximum principle for $\Ddt \chi_J = 0$.  To obtain \eqref{eq:chiJ}, we apply \cite[Proposition 17.4]{isett} in the case of order $L=2$ frequency-energy levels to obtain 
\ali{
E_M[\chi_J](\Phi_s(t,x)) &\leq e^{C_M \Xi e_v^{1/2} |s|} E_M[\chi_J](t(I), x) \label{eq:dimEnGrowthbd} \\
E_M[\chi_J](t,x) &:= \sum_{0 \leq |\va| \leq M} \Xi^{-2|\va|} N^{-(|\va| - 1)_+} |\nb_{\va} \nb \chi_J(t,x)|^2,
}
and we use the fact that, by the construction in \eqref{eq:rescalingInit},
\ali{
E_M[\chi_J](t(I), x) &\lsm_M \sum_{0 \leq |\va| \leq M} \Xi^{-2|\va|} N^{-(|\va| - 1)_+} ( \Xi^{|\va| + 1} )^2 \lsm_M \Xi^2.
}
We have  $\Xi e_v^{1/2} |s| \leq \Xi e_v^{1/2} \th \leq 1$ on the support of the time cutoff $e_I^{1/2}$ from \eqref{eq:timeCutoffSupp}, so \eqref{eq:dimEnGrowthbd} yields $\co{ \nb_{\va} \chi_J} \lsm N^{(|\va|-2)_+/2} \Xi^{|\va|}$, which implies \eqref{eq:chiJ}.

The proofs of estimates \eqref{eq:amplitudeBounds}-\eqref{eq:lowOrderDtAmpbds} for $v_J^\ell$ and for $\de v_{J, \a \b \ga}^\ell$ are exactly as in \cite[Section 17.1]{isettOnsag} with the addition of the cutoff function $\chi_J$.  For instance, note that $\chi_J (\nb \Ga_I^{-1})_\a^a$ and $\Ddt [ \chi_J (\nb \Ga_I^{-1})_\a^a ] = \chi_J \Ddt (\nb \Ga_I^{-1})_\a^a $ obey the same bounds as $(\nb \Ga_I^{-1})_\a^a$ and $\Ddt (\nb \Ga_I^{-1})_\a^a$ respectively up to constants, so we may absorb the cutoff $\chi_J$ into the first factor of $(\nb \Ga^{-1})$ in estimating formulas \eqref{eq:amplitudeFormula} and \eqref{eq:devJabForm} while repeating the proofs in \cite[Section 17.1]{isettOnsag}.

It remains to check \eqref{eq:lowerOrderAmpbds}-\eqref{eq:lowOrderDtAmpbds} for the lower order term $\de v_{J,\a\b\ga}^\ell$.  Applying Lemma~\ref{lem:elementaryLem}, we obtain
\ali{
\begin{split}
\notag
\la \Xi^{-1} \de v_{J,\a\b\ga}^\ell &=  \Xi^{-1} \la^{-1} \nb_a \nb_c[ \chi_J (\nb \Ga_I^{-1})^a_\a (\nb \Ga_I^{-1})_\b^\ell(\nb \Ga_I^{-1})_\ga^c  e^{1/2}_I(t) \ga_{(I,f)} ] \\
\la \Xi^{-1} H_{M,0}[\de v_{J,\a\b\ga}^\ell] &\lsm_M H_{M+1,1}[\la^{-1} \nb_c[ \chi_J (\nb \Ga_I^{-1})^a_\a (\nb \Ga_I^{-1})_\b^\ell(\nb \Ga_I^{-1})_\ga^c  e^{1/2}_I(t) \ga_{(I,f)} ] \\
&\lsm_M H_{M+2,1}[\chi_J (\nb \Ga_I^{-1})^a_\a (\nb \Ga_I^{-1})_\b^\ell(\nb \Ga_I^{-1})_\ga^c  e^{1/2}_I(t) \ga_{(I,f)}] \end{split} \\
&\lsm_M e_I^{1/2}(t) H_{M+2,1}[\chi_J] H_{M+2,1}[(\nb \Ga_I^{-1})]^3 H_{M+2,1}[\ga_{(I,f)}] \label{eq:almostLast}\\
H_{M,0}[\de v_{J,\a\b\ga}^\ell] &\lsm_M \la^{-1} \plhxi^{1/2} \Xi e_R^{1/2}. \label{eq:bdWeWanted}
}
Here every term in \eqref{eq:almostLast} is bounded by $\lsm_M 1$ except $e_I^{1/2}(t)$.  Note that \eqref{eq:bdWeWanted} is equivalent to \eqref{eq:lowerOrderAmpbds}.

To prove \eqref{eq:lowOrderDtAmpbds}, we proceed similarly by commuting in the advective derivative weighted by the parameter $\th \sim \plhxi^{-2} \Xi^{-1} e_v^{-1/2}$:
\ali{
(\la \Xi^{-1} \th) \Ddt \de v_{J,\a\b\ga}^\ell &= \Xi^{-1} \la^{-1} \nb_a \nb_c[\th \Ddt [\chi_J (\nb \Ga_I^{-1})^a_\a (\nb \Ga_I^{-1})_\b^\ell(\nb \Ga_I^{-1})_\ga^c  e^{1/2}_I(t) \ga_{(I,f)} ] ] \label{eq:commutedTermdlt} \\
&- \th (\nb_a v_\ep^i) \Xi^{-1} \la^{-1} \nb_i \nb_c[\chi_J (\nb \Ga_I^{-1})^a_\a (\nb \Ga_I^{-1})_\b^\ell(\nb \Ga_I^{-1})_\ga^c  e^{1/2}_I(t) \ga_{(I,f)} ] \label{eq:commutermsFirst} \\
&- \Xi^{-1}\nb_a[ \nb_c v_\ep^i \la^{-1}\nb_i[\chi_J (\nb \Ga_I^{-1})^a_\a (\nb \Ga_I^{-1})_\b^\ell(\nb \Ga_I^{-1})_\ga^c  e^{1/2}_I(t) \ga_{(I,f)} ] ]. \label{eq:commutermsSecond}
}
The terms \eqref{eq:commutermsFirst} and \eqref{eq:commutermsSecond} may be estimated using Lemma~\ref{lem:elementaryLem} as in the proof of \eqref{eq:almostLast}-\eqref{eq:bdWeWanted} to obtain
\ALI{
H_{M,0}[\eqref{eq:commutermsFirst}] + H_{M,0}[\eqref{eq:commutermsSecond}] &\lsm_M e_I^{1/2}(t) H_{M+1,1}[ \th \nb v_\ep ] H_{M+2,1}[\chi_J] H_{M+2,1}[(\nb \Ga_I^{-1})]^3 H_{M+2,1}[\ga_{(I,f)}] \\
\stackrel{\eqref{eq:vepBds}-\eqref{eq:chiJ} }{\lsm_M}& e_I^{1/2}(t) \lsm \plhxi^{1/2} e_R^{1/2}.
}
For \eqref{eq:commutedTermdlt}, apply the product rule for $\th \Ddt$ and apply Lemma~\ref{lem:elementaryLem} repeatedly to obtain %repeat the derivation of \eqref{eq:almostLast} to obtain
\ali{
\begin{split}
H_{M,0}[\eqref{eq:commutedTermdlt}] \lsm_M&  \left(e_I^{1/2}(t) + \th \left|\pr_t e_I^{1/2}(t) \right|\right) H_{M+2,1}[\chi_J] \cdot \left(H_{M+2,1}[(\nb \Ga_I^{-1})] + \th H_{M+2,1}[\Ddt(\nb \Ga_I^{-1})]\right)^3 \\
&\cdot (H_{M+2,1}[\ga_{(I,f)}] + \th H_{M+2,1}[\Ddt \ga_{(I,f)}]). \notag
\end{split}
}
Since $\th H_{M+2,1}[\Ddt \ga_{(I,f)}]$ and $\th H_{M+2,1}[\Ddt(\nb \Ga_I^{-1})]$ are bounded by $\lsm_M 1$ from \eqref{eq:gaIbounds}-\eqref{eq:DdtbackmapBds}, we have
\ALI{
H_{M,0}[\de v_{J,\a\b\ga}] &\leq \th^{-1} \la^{-1} \Xi (H_{M,0}[\eqref{eq:commutedTermdlt}] + H_{M,0}[\eqref{eq:commutermsFirst}] + H_{M,0}[\eqref{eq:commutermsSecond}] ) \\
&\lsm_M \th^{-1} \la^{-1} \Xi \left(e_I^{1/2}(t) + \th \left|\pr_t e_I^{1/2}(t) \right|\right) \lsm \th^{-1} \la^{-1} \Xi \plhxi^{1/2} e_R^{1/2}.
}
This bound is equivalent to the desired bound \eqref{eq:lowOrderDtAmpbds} for $\de v_{J,\a\b\ga}$.
\end{proof}
As \eqref{eq:amplitudeBounds}-\eqref{eq:lowOrderDtAmpbds} are the same bounds for the components of the correction as those proven for $v_{(I,f)}^\ell$ and $\de v_{(I,f),\a\b}^\ell$ in \cite[Section 17]{isettOnsag}, we have the following bounds from \cite[Proposition 17.3]{isettOnsag}.
\begin{prop}[Correction estimates]  For $0 \leq |\va| \leq 3$, we have
\ali{
\sup_J \co{\nb_{\va} \VR_J} &\lsm (B_\la N \Xi)^{|\va|} \plhxi^{1/2} e_R^{1/2} \\
\sup_J \co{\nb_{\va} \de V_J} &\lsm (B_\la N \Xi)^{|\va| - 1} \Xi \plhxi^{1/2} e_R^{1/2} \\
\co{ V } &\lsm (B_\la N \Xi)^{|\va|} \plhxi^{1/2} e_R^{1/2}  \label{eq:c0CorrectBound} \\
\suppt V \subseteq &\bigcup_I \suppt e_I \subseteq \bigcup_I [t(I) - \th, t(I) + \th].
}
\end{prop}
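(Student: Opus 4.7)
The plan is to reduce each estimate to repeated applications of Lemma~\ref{lem:elementaryLem} together with the component bounds \eqref{eq:gaIbounds}, \eqref{eq:chiJ}, and \eqref{eq:amplitudeBounds}--\eqref{eq:lowerOrderAmpbds}, following the same pattern as in \cite[Proposition 17.3]{isettOnsag}. A key observation used throughout is $\la = B_\la N \Xi \geq N^{1/2}\Xi$, so that whenever a derivative lands on a factor involving $\la\Ga_I$ we gain a factor of $\la$ that dominates the $N^{(|\va|-1)_+/2}$ losses appearing in the bounds for $\nb\Ga_I^{-1}$, $\ga_{(I,f)}$, and $\chi_J$.

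First I would estimate the main part $\VR_J = v_J^\ell \psi_J$. Writing $\psi_J(t,x) = \psi_{(f,[J])}(\la \Ga_I(t,x))$, the chain rule expresses $\nb_{\va} \psi_J$ as a sum of terms combining $\la^k$ factors coming from derivatives of $\psi_{(f,[J])}$ with products of derivatives of $\nb \Ga_I$. Applying Lemma~\ref{lem:elementaryLem} at the frequency scale $\la$ (so that the relevant weighted norm has $\la$ in place of $\Xi$), one obtains $\co{\nb_{\va}\psi_J} \lsm \la^{|\va|}$ for $|\va| \leq 3$. Combining this with the amplitude bound \eqref{eq:amplitudeBounds} via the product estimate \eqref{eq:productBound} yields the claimed bound $\co{\nb_{\va} \VR_J} \lsm (B_\la N \Xi)^{|\va|}\plhxi^{1/2}e_R^{1/2}$.

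For the lower order part $\de V_J = \de v_{J,\a\b}^\ell \Om_J^{\a\b} + \de v_{J,\a\b\ga}^\ell \wOm_J^{\a\b\ga}$, the potentials $\Om_J^{\a\b}$ and $\wOm_J^{\a\b\ga}$ also satisfy $\co{\nb_{\va}\,\cdot\,} \lsm \la^{|\va|}$ by the same chain rule argument applied to \eqref{eq:OmJabForm}, while the amplitudes $\de v_{J,\a\b}$ and $\de v_{J,\a\b\ga}$ are controlled by \eqref{eq:lowerOrderAmpbds}, which loses a factor of $\la^{-1}\Xi$ compared to the main amplitude. Multiplying gives $\co{\nb_{\va} \de V_J} \lsm (B_\la N \Xi)^{|\va|-1}\Xi\,\plhxi^{1/2} e_R^{1/2}$.

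Finally, to bound $V = \sum_J V_J$ I would use the disjoint support property \eqref{eq:disjointSupps} of the main terms. Since $\supp \nb_{\va} \VR_J \subseteq \supp \VR_J$, the disjointness collapses $\co{\nb_{\va}\sum_J \VR_J}$ to $\sup_J \co{\nb_{\va} \VR_J}$. For the corrections $\de V_J$ the supports need not be disjoint, but the time disjointness \eqref{ct:disjointness} together with the localization of $\chi_J$ to a ball of radius $\sim \Xi^{-1}$ around $\Pi^{-1}[J_1,J_2,J_3]$ (with $\#\F=6$ direction choices) guarantees that at each $(t,x)$ only an absolutely bounded number of $\de V_J$ are nonzero, so the sum is controlled by $\sup_J \co{\nb_{\va}\de V_J}$ up to a universal constant. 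The support assertion $\suppt V \subseteq \bigcup_I [t(I)-\th, t(I)+\th]$ is immediate since every term in formulas \eqref{eq:doubleDivForm}--\eqref{eq:devJabForm} carries a factor of $e_I^{1/2}(t)$, whose support satisfies \eqref{eq:timeCutoffSupp}.

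The bookkeeping at the top order $|\va|=3$ is the one step requiring care, since the composition $\psi_{(f,[J])}\circ (\la\Ga_I)$ together with the double divergence in \eqref{eq:doubleDivForm} mixes derivatives across many factors; one must verify in every case that the $N^{(|\va|-1)_+/2}$ factors from \eqref{eq:gaIbounds} and \eqref{eq:chiJ} combine with the $\la^{|\va|}$ gain to yield exactly $(B_\la N \Xi)^{|\va|}$ rather than a strictly larger power, which is where the hypothesis $\la \geq N^{1/2}\Xi$ is essential.
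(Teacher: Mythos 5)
Your proposal matches the paper's route: the paper simply cites \cite[Proposition 17.3]{isettOnsag} for the estimates on $\VR_J$ and $\de V_J$ (since the component bounds \eqref{eq:amplitudeBounds}--\eqref{eq:lowOrderDtAmpbds} coincide with those proven there), and then adds the remark that \eqref{eq:c0CorrectBound} follows from the bounded overlap of the waves' supports, with that detail deferred until after equation \eqref{eq:finallyBddRHT}. One minor imprecision in your overlap argument: the localization of $\chi_J$ to a $\sim\Xi^{-1}$-ball holds at time $t(I)$, and to conclude bounded overlap at all $t\in[t(I)-\th,t(I)+\th]$ one must control the Lagrangian spread of the cutoffs, which the paper does via the cylinder containments \eqref{eq:containCylsequiv}--\eqref{eq:containsuppvJ} together with the smallness of $\th\co{\nb v_\ep}$ (cf.\ \eqref{eq:b0bd}).
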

\noindent For the estimate \eqref{eq:c0CorrectBound}, we use that at most a bounded number (say $2^3$) distinct $V_J^\ell$ are supported at any given point $(t,x)$.  This detail will be explained following equation \eqref{eq:finallyBddRHT} below.  We now consider the error terms and their estimates.

\section{The Error Terms} \label{sec:errorTerms}
Given the Euler-Reynolds flow $(v,p,R)$, the new velocity field $v_1^\ell = v^\ell + V^\ell$, $V^\ell = \sum_J V_J^\ell = \sum_J \VR_J^\ell + \de V_J^\ell$ and pressure $p_1 = p + P$ will solve the Euler-Reynolds equations when coupled to a new Reynolds stress tensor $R_1^{j\ell}$.  The new stress $R_1^{j\ell}$ will be composed of terms that solve
\ali{
R_1^{j\ell} &= R_M^{j\ell} + R_T^{j\ell} + R_S^{j\ell} + R_H^{j\ell} \label{eq:R1defterms} \\
R_M^{j\ell} &= (v^j - v_\ep^j) V^\ell + V^j (v^\ell - v_\ep^\ell) + (R^{j\ell} - R_\ep^{j\ell}) \\
\nb_j R_T^{j\ell} &= \pr_t V^\ell + \nb_j ( v_\ep^j V^\ell + V^j v_\ep^\ell) \label{eq:transTerm} \\
R_S^{j\ell} &= \sum_{J,K \in \JJ} \de V_J^j \VR_K^\ell + \VR_J^j \de V_K^\ell + \de V_J^j \de V_K^\ell \\
\nb_j R_H^{j\ell} &= \nb_j \left[ \sum_{J\in\JJ} \VR_J^j \VR_J^\ell + P \de^{j\ell} + R_\ep^{j\ell} \right]. \label{eq:highTerm}
}
In writing \eqref{eq:highTerm}, we have made the crucial observation that all of the off-diagonal terms in the summation $\sum_{J,K \in \JJ} \VR_J^j \VR_K^\ell$ vanish due to the disjointness of support stated in \eqref{eq:disjointSupps}.  

Our construction has been designed in such a way that
\ali{
\sum_{J\in\JJ} v_J^j v_J^\ell + P \de^{j\ell} + R_\ep^{j\ell} &= 0. \label{eq:cancelStress}
}
From \eqref{eq:formOfCorrection} and \eqref{eq:cancelStress}, equation \eqref{eq:highTerm} reduces to
\ali{
\nb_j R_H^{j\ell} &= \nb_j \left[ \sum_{J\in\JJ} v_J^j v_J^\ell (\psi_J^2 - 1) \right]. \label{eq:reducedRHform}
}
To verify \eqref{eq:cancelStress}, note that for each $I \in \Z$ and $\JJ(I) := \{ I \} \times (\Z / \Pi \Z)^3 \times \F$, we have from \eqref{eq:quadPart}, \eqref{eq:relatedFormula} that
\ali{
\sum_{J\in\JJ(I)} v_J^j v_J^\ell &= \sum_{[k] \in (\Z / \Pi \Z)^3} \sum_{f \in \F} \chi_{(I, [k])}^2 v_{(I,f)}^j v_{(I,f)}^\ell = \sum_{f \in \F} v_{(I,f)}^j v_{(I,f)}^\ell, \label{eq:relateLowFreqPart}
}
where $v_{(I,f)}$ are the amplitudes from the construction in \cite{isettOnsag}.  The equality
\ali{
\sum_{I \in \Z} \sum_{f \in \F} v_{(I,f)}^j v_{(I,f)}^\ell + P \de^{j\ell} + R_\ep^{j\ell} &= 0
}
proved in \cite[Sections 14-15]{isettOnsag} now implies the equality \eqref{eq:cancelStress} in the present construction using \eqref{eq:relateLowFreqPart}.

It now remains to show that, when $R_T^{j\ell}$ and $R_H^{j\ell}$ are chosen appropriately, the tensor $R_1^{j\ell}$ defined by \eqref{eq:R1defterms} satisfies the bounds required by Lemma~\ref{lem:convexInt}.

\section{Solving the Symmetric Divergence Equation} \label{sec:solveSymmdiv}
To estimate the error tensor $R_1$ defined in \eqref{eq:R1defterms}, the only terms that require a different treatment from \cite{isettOnsag} are the terms $R_T$ and $R_H$.  Namely, since our choice of $v_\ep$ and $R_\ep$ and our estimates for $\VR_J$ and $\de V_J$ also coincide with those of \cite{isettOnsag}, Proposition 17.4 from \cite{isettOnsag} shows that
\ali{
\co{R_M} + \co{R_S} &\leq \plhxi \fr{e_v^{1/2} e_R^{1/2}}{10 N} \label{eq:c0bdRMS}\\
\co{\nb_{\va} R_M} + \co{\nb_{\va} R_S} &\lsm (B_\la N \Xi)^{|\va|} (\lhxi) \fr{e_v^{1/2} e_R^{1/2}}{N}, \qquad 1 \leq |\va| \leq 3 \\
\suppt R_M \cup \suppt R_S &\subseteq \bigcup_I[t(I) - \th, t(I) + \th]
}
provided we choose the constant $B_\la$ in the definition of $\la = B_\la N \Xi$ larger than a certain, absolute constant $\overline{B}_\la$.

The tensors $R_T$ and $R_H$ are defined as summations of the form% of tensors
\ali{
R_T^{j\ell} = \sum_{J \in \JJ} R_{T, J}^{j\ell}, \qquad R_H^{j\ell} = \sum_{J \in \JJ} R_{H, J}^{j\ell}, \label{eq:RTHdef}
}
where each term is symmetric and is localized both in space and in time around the support of $V_J^\ell$.

We expand the terms \eqref{eq:transTerm} and \eqref{eq:reducedRHform} (using the orthogonality $v_J^j \nb_j \psi_J = 0$ stated in \eqref{eq:orthogonality} in the case of $R_H$, and using $\nb_j v_\ep^j = \nb_j V_J^j = 0$ in the case of $R_T$) to obtain the equations
\ali{
\nb_j R_{T,J}^{j\ell} &= \pr_t V_J^\ell + \nb_j(v_\ep^j V_J^\ell + V_J^j v_\ep^\ell) \label{eq:RTJdef} \\
\nb_j R_{T,J}^{j\ell} &= u_{TJ}^\ell \psi_J + u_{TJ, \a \b}^\ell \Om^{\a \b}_J + u_{TJ, \a\b\ga}^\ell \wOm^{\a\b\ga}_J \label{eq:transportTermFull} \\
\nb_j R_{H,J}^{j\ell} &= u_{HJ}^\ell (\psi_J^2 - 1) \label{eq:highTermFull} \\
u_{HJ}^\ell &= \nb_j[ v_J^j v_J^\ell] \notag \\
u_{TJ}^\ell &:= \Ddt v_J^\ell + v_J^j \nb_j v_\ep^\ell, \quad \notag \\
u_{TJ, \a \b}^\ell &:= \Ddt \de v_{J,\a\b}^\ell + \de v_{J,\a\b}^j \nb_j v_\ep^\ell, \quad \notag \\
u_{TJ, \a \b\ga}^\ell &:= \Ddt \de v_{J,\a\b\ga}^\ell + \de v_{J,\a\b\ga}^j \nb_j v_\ep^\ell. \notag 
}
By the construction in Section~\ref{sec:threadingMikadoFlows}, each of the functions $\psi_J$, $(\psi_J^2 - 1)$, $\Om^{\a\b}_J$ and $\wOm^{\a\b\ga}_J$ have the form $\om(\la \Ga_I(t,x))$, where $\om : \T^3 \to \R$ belongs to a finite set of smooth functions that mean zero on $\T^3$.  We may therefore apply the following Proposition, which is similar to \cite[Proposition 17.6]{isettOnsag} and is proven in Section~\ref{sec:paramExpand} below using the same parametrix expansion technique.
\begin{prop}[Nonstationary Phase]  \label{prop:parametric} If $U^\ell : \R \times \T^3 \to \R^3$ is a smooth vector field of the form
%\ali{
$U^\ell = u^\ell \om(\la \Ga_I)$, 
%}
where $\om : \T^3 \to \R$ is a smooth function of mean zero, then for any $D \geq 1$ there exist a smooth, symmetric tensor field $Q^{j\ell}_{(D)} : \R \times \T^3 \to \R^3 \otimes \R^3$ and a vector field $U_{(D)}^{\ell}$ satisfying
\ali{
U^\ell &= \nb_j Q_{(D)}^{j\ell} + U_{(D)}^\ell \\
\sup_{0 \leq |\va| \leq 3} \la^{-|\va|} \co{\nb_{\va} Q_{(D)}^{j\ell} } &\lsm \la^{-1} \sup_{0 \leq |\va| \leq D + 3} \fr{\co{\nb_{\va} u^\ell} }{N^{|\va|/2} \Xi^{|\va|}} \\
\sup_{0 \leq |\va| \leq 3} \la^{-|\va|} \co{\nb_{\va} U^\ell } &\lsm B_\la^{-1} N^{-D/2} \sup_{0 \leq |\va| \leq D + 3} \fr{\co{\nb_{\va} u^\ell} }{N^{|\va|/2} \Xi^{|\va|}} \\
\supp Q_{(D)}^{j\ell} &\cup \supp U_{(D)}^\ell \subseteq \supp U^\ell,
}
where the implicit constant depends only on $\om$ and $D$.
\end{prop}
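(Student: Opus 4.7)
The plan is to adapt the parametrix expansion from \cite[Proposition 17.6]{isettOnsag} to the present setting where the antidivergence tensor $Q_{(D)}^{j\ell}$ must additionally be symmetric in $j\ell$. I would first Fourier expand $\om(X) = \sum_{k \in \Z^3 \setminus \{0\}} \hat{\om}_k e^{2\pi i k \cdot X}$, omitting $k = 0$ by the mean-zero hypothesis and using the Schwartz-class decay of $\hat{\om}_k$ to absorb polynomial-in-$k$ factors that appear later. This reduces the problem to constructing, mode by mode, a symmetric antidivergence for $u^\ell(t,x)\, e^{2\pi i \la k \cdot \Ga_I(t,x)}$.

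For each $k \neq 0$, set $\phi_k^a := \nb^a(\la k \cdot \Ga_I) = \la k_b (\nb \Ga_I)^{b,a}$. Since $\nb \Ga_I$ is uniformly invertible on $\supp u^\ell$ by \eqref{eq:gaIbounds}, we have $|\phi_k| \sim \la |k|$, and the explicit symmetric tensor
\[
q^{j\ell}(k) := \frac{1}{2\pi i}\left( \frac{u^j \phi_k^\ell + u^\ell \phi_k^j}{|\phi_k|^2} - \frac{(u \cdot \phi_k)\phi_k^j \phi_k^\ell}{|\phi_k|^4} \right)
\]
satisfies $q^{j\ell}(k)\phi_{k,j} = u^\ell/(2\pi i)$. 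Differentiating $\hat{Q}_k^{j\ell} := q^{j\ell}(k) e^{2\pi i \la k \cdot \Ga_I}$ yields
\[
\nb_j \hat{Q}_k^{j\ell} = u^\ell e^{2\pi i \la k \cdot \Ga_I} + \bigl(\nb_j q^{j\ell}(k)\bigr) e^{2\pi i \la k \cdot \Ga_I},
\]
so up to the lower-order error $\nb_j q^{j\ell}(k)$, the mode $u^\ell e^{2\pi i \la k \cdot \Ga_I}$ has been written as a symmetric divergence. Since $q^{j\ell}(k)$ is pointwise built from $u^\ell$ and smooth functions of $\nb\Ga_I$, both $\hat{Q}_k^{j\ell}$ and its error term are supported in $\supp u^\ell = \supp U^\ell$, giving the claimed support property.

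I would then iterate this construction $D$ times. At stage $m$ the new amplitude is $u^\ell_{m,k} := -\nb_j q^{j\ell}_{m-1}(k)$; applying the formula above produces a further symmetric tensor contribution to the running $Q_{(D)}^{j\ell}$ and a new oscillatory residual. The key scaling is that each differentiation of $q^{j\ell}_{m}(k)$ lands at worst on $u$ with a cost of $N^{1/2}\Xi$ by hypothesis (smaller contributions from differentiating $\nb \Ga_I$ or $|\phi_k|^{-2}$ yield only $\Xi$), while the division by $|\phi_k|$ supplies a factor of $(\la|k|)^{-1}$. In terms of the weighted norm $M := \sup_{|\va| \leq D+3} \Xi^{-|\va|} N^{-|\va|/2} \co{\nb_{\va} u^\ell}$, the amplitude gain per iteration is thus $\la^{-1}\cdot N^{1/2}\Xi = B_\la^{-1} N^{-1/2}$. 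After $D$ iterations, the residual $U_{(D)}^\ell$ is bounded by $(B_\la^{-1} N^{-1/2})^D M \leq B_\la^{-1} N^{-D/2} M$, while the partial sum $Q_{(D)}^{j\ell}$ is controlled by its leading $m=1$ term at size $\la^{-1} M$. The spatial derivative bounds for $|\va| \leq 3$ then follow from Lemma~\ref{lem:elementaryLem}, since each additional derivative lands either on an exponential (contributing at most $\la$) or on an already-controlled amplitude.

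The main obstacle I anticipate is the bookkeeping across $D$ iterations: one must track that the amplitudes $q^{j\ell}_m(k)$ remain uniformly bounded mode by mode while the polynomial-in-$k$ factors generated by differentiating $\phi_k$ and $|\phi_k|^{-2}$ are absorbed by the Schwartz decay of $\hat{\om}_k$, and that the constants depending on the uniform invertibility lower bound for $\nb \Ga_I$ propagate through the iteration. The need for $D + 3$ derivatives on $u^\ell$ in the weighted norm $M$ arises precisely because at the final stage the iterated amplitude $u^\ell_{D,k}$ must still be differentiated up to three more times to obtain $C^0$ control of $\nb_{\va} U_{(D)}^\ell$ for $|\va| \leq 3$.
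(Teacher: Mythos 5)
Your proposal matches the paper's argument (Section~\ref{sec:paramExpand}, following \cite[Proposition 17.6]{isettOnsag}): Fourier expansion of $\om$ in $m$, mode-by-mode inversion of $i\nb_j\xi_m q^{j\ell}=u^\ell$ via a degree-$(-1)$ homogeneous symmetric symbol, iteration $D$ times with a per-step gain of $\la^{-1}\cdot N^{1/2}\Xi = B_\la^{-1}N^{-1/2}$, and absorption of the polynomial-in-$m$ factors by the rapid decay of $\hat{\om}$. The only cosmetic difference is that you supply the explicit symmetric inverse $q^{j\ell}(k)$ (which indeed satisfies $q^{j\ell}\phi_{k,j}=u^\ell/2\pi i$ and is symmetric), whereas the paper cites the earlier reference for a concrete example; the iteration structure, the weighted-norm bookkeeping, the support property, and the reason for needing $D+3$ derivatives are all the same.
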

%Proposition~\ref{prop:parametric} is proven in Section~\ref{sec:paramExpand} below using the technique of \cite[Proposition 17.6]{isettOnsag}.

We apply Proposition~\ref{prop:parametric} to each of the terms in \eqref{eq:transportTermFull} and \eqref{eq:highTermFull} and use the estimates
\ALI{
H_{D+3,0}[u_{TJ}^\ell] + H_{D+3,0}[u_{TJ,\a\b}^\ell] + H_{D+3,0}[&u_{TJ,\a\b\ga}^\ell] + H_{D+3,0}[u_{HJ}^\ell] \lsm \plhxi^{5/2} \Xi e_v^{1/2} e_R^{1/2} \\
H_{D+3,0}[u] &:= \sup_{0 \leq |\va| \leq D + 3} \fr{\co{\nb_{\va} u^\ell} }{N^{|\va|/2} \Xi^{|\va|}},
}
which follow from \eqref{eq:amplitudeBounds}-\eqref{eq:lowOrderDtAmpbds} and Lemma~\ref{lem:elementaryLem} (and are saturated only by $u_{TJ}^\ell$), to obtain decompositions
\ali{
\eqref{eq:transportTermFull} &= \nb_j Q_{TJ,(D)}^{j\ell} + U_{TJ,(D)}^\ell, \qquad 
\eqref{eq:highTermFull} = \nb_j Q_{HJ,(D)}^{j\ell} + U_{HJ,(D)}^\ell \label{eq:defineRemainders}
}
where the symmetric tensors $Q_{TJ,(D)}$ and $Q_{HJ,(D)}$ and remainder terms $U_{TJ, (D)}$ and $U_{HJ, (D)}$ satisfy
\ali{
\sup_{0 \leq |\va| \leq 3} \la^{-|\va|} (\co{\nb_{\va} Q_{TJ, (D)}^{j\ell} } + \co{\nb_{\va} Q_{HJ, (D)}^{j\ell} } ) &\lsm_D \la^{-1} \plhxi^{5/2} \Xi e_v^{1/2} e_R^{1/2} \\
&\lsm_D B_\la^{-1} \plhxi^{5/2} \fr{e_v^{1/2} e_R^{1/2}}{N} \label{eqQHJDbds}\\
\sup_{0 \leq |\va| \leq 3} \la^{-|\va|} (\co{\nb_{\va} U_{TJ, (D)}^{j\ell} } + \co{\nb_{\va} U_{HJ, (D)}^{j\ell} } ) &\lsm_D B_\la^{-1} N^{-D/2} \plhxi^{5/2} \Xi e_v^{1/2} e_R^{1/2} \label{sec:UTHJDbd} \\
\supp U_{TJ, (D)} \cup \supp U_{HJ, (D)} \cup \supp Q_{TJ,(D)} &\cup \supp Q_{HJ,(D)} \subseteq \supp \chi_J \cdot e_I^{1/2}(t). \label{eq:suppparamCt}
%\supp Q_{TJ,(D)} \cup \supp Q_{HJ,(D)} &\subseteq \supp v_J^\ell
}
To complete the construction of $R_{T,J}^{j\ell}$ and $R_{HJ}^{j\ell}$ to \eqref{eq:transportTermFull}-\eqref{eq:highTermFull}, we construct solutions to the equations 
\ali{
\nb_j R_{TJ,(D)}^{j\ell} = U_{TJ,(D)}^\ell, \qquad \nb_j R_{HJ,(D)}^{j\ell} = U_{HJ,(D)}^\ell \label{eq:solveSymmDivRem}
}
that are localized around space-time cylinders containing the supports of $v_J$ by using the inverses for the symmetric divergence equation that were constructed in \cite{IOnonpd}.  We first recall the notions of Lagrangian and Eulerian cylinders from \cite{IOnonpd}.
\begin{defn} Let $\Phi_s$ be the flow map associated to $v_\ep$ as defined in \eqref{eq:flowMap}.  Given a point in space-time $(t_0, x_0) \in \R \times \T^3$ and positive numbers $\tau, \rho > 0$, we define the {\it $v_\ep$-adapted Eulerian cylinder} $\ECyl(\tau, \rho; t_0, x_0)$ with duration $2 \tau$ and base radius $\rho$ as well as the {\it $v_\ep$-adapted Lagrangian cylinder} $\LCyl(\tau, \rho; t_0, x_0)$ with duration $2 \tau$ and base radius $\rho$ to be
\ali{
\ECyl(\tau, \rho; t_0, x_0) &:= \{ \Phi_s(t_0, x_0) + (0, h) : 0 \leq |s| \leq \tau, 0 \leq |h| \leq \rho \} \\
\LCyl(\tau, \rho; t_0, x_0) &:= \{ \Phi_s(t_0, x_0 + h) : 0 \leq |s| \leq \tau, 0 \leq |h| \leq \rho \}
}
\end{defn}
The two notions are related (see \cite[Lemma 5.2]{IOnonpd}) by
\ali{
(t', x') \in \ECyl(\tau, \rho; t_0, x_0) \iff (t,x) \in \LCyl_v(\tau, \rho; t', x') \label{eq:duality}\\
\LCyl(\tau, e^{-\tau \co{\nb v_\ep}} \rho; t_0, x_0) \subseteq \ECyl(\tau, \rho; t_0, x_0) \subseteq \LCyl(\tau, e^{\tau \co{\nb v_\ep}} \rho; t_0, x_0). \label{eq:containCylsequiv}
}
It follows that the amplitudes constructed in Section~\ref{sec:threadingMikadoFlows} are supported in an Eulerian cylinder
\ali{
\supp \chi_J \cdot e_I^{1/2}(t) \subseteq \LCyl(\th, \Pi^{-1}; &t(I), x_0(J) ) \subseteq \ECyl(\th, e^{\th \co{\nb v_\ep} } \Pi^{-1}; t(I), x_0(J) ) \notag \\
\supp \chi_J \cdot e_I^{1/2}(t) &\subseteq \ECyl(\th, \Xi^{-1}; t(I), x_0(J) ), \label{eq:containsuppvJ}
}
and the remainder terms $U_{TJ, (D)}^\ell$ and $U_{HJ, (D)}^\ell$ are supported in the same Eulerian cylinder by \eqref{eq:suppparamCt}.

Before we can obtain symmetric tensors that solve the equations in \eqref{eq:solveSymmDivRem}, we must check that the necessary orthogonality conditions
\ali{
\int_{\R^3} U^\ell(t,x) dx = 0, \quad \int_{\R^3} ( x^j U^\ell - x^\ell U^j)(t,x) dx = 0, \qquad 1 \leq j, \ell \leq 3 \label{eq:necOrthogCondition}
}
are satisfied, where $U^\ell$ is the (nonperiodic restriction of) $U_{TJ,(D)}^\ell$ or $U_{HJ,(D)}^\ell$.  To check condition~\eqref{eq:necOrthogCondition}, note that $U_{HJ,(D)}^\ell$ is by construction in \eqref{eq:reducedRHform},\eqref{eq:defineRemainders} the divergence of a smooth {\it symmetric} tensor with compact support, and that $U_{TJ,(D)}^\ell$ has the form $\nb_a \nb_c [T_J^{a c \ell} ] + \nb_j U_J^{j\ell}$ (using \eqref{eq:doubleDivForm},\eqref{eq:RTJdef},\eqref{eq:defineRemainders}), where $U_J^{j\ell}$ is symmetric and both $T_J^{a c \ell}$ and $U_J^{j\ell}$ have compact support in the cylinder \eqref{eq:containsuppvJ}.  Integrating by parts, one obtains the conditions \eqref{eq:necOrthogCondition} for the nonperiodic restrictions of both $U_{TJ,(D)}^\ell$ and $U_{HJ,(D)}^\ell$.

We can now apply the operators in \cite[Section 10]{IOnonpd} (in particular Lemmas 10.3 and 10.4 with $\bar{\rho} = \Xi^{-1}$) to obtain symmetric tensors solving \eqref{eq:solveSymmDivRem} such that
\ali{
\supp R_{TJ,(D)}^{j\ell} \cup \supp R_{HJ,(D)}^{j\ell} &\subseteq \ECyl(\th, \Xi^{-1}; t(I), x_0(J) ) \label{eq:suppContain} \\
\co{R_{TJ,(D)}^{j\ell}} + \co{R_{HJ,(D)}^{j\ell}} &\lsm \Xi^{-1} (\co{U_{TJ,(D)}^\ell} + \co{U_{HJ,(D)}^\ell} ) \notag \\
\co{R_{TJ,(D)}^{j\ell}} + \co{R_{HJ,(D)}^{j\ell}} &\stackrel{\eqref{sec:UTHJDbd}}{\lsm} B_\la^{-1} N^{-D/2}\plhxi^{5/2} e_v^{1/2} e_R^{1/2}  \label{eq:coBdRemainderSol} \\
 \co{\nb_{\va} R_{TJ,(D)}^{j\ell}} + \co{\nb_{\va} R_{HJ,(D)}^{j\ell}} &\lsm_{|\va|} \Xi^{-1} \sum_{|\vcb| \leq |\va|} \Xi^{|\va| - |\vcb|} (\co{\nb_{\vcb} U_{TJ,(D)}^\ell} + \co{\nb_{\vcb} U_{HJ,(D)}^\ell} ). \label{eq:derivRmdrBdSoln}
}
We now set $D = 2$ and define $R_{TJ}^{j\ell} = Q_{TJ,(D)}^{j\ell} + R_{TJ,(D)}^{j\ell}$ and $R_{HJ}^{j\ell} = Q_{HJ,(D)}^{j\ell} + R_{HJ,(D)}^{j\ell}$.  Combining \eqref{eqQHJDbds}, \eqref{eq:suppparamCt}, \eqref{eq:suppContain}, and \eqref{eq:coBdRemainderSol}  into \eqref{eq:RTHdef}, we obtain the estimate
\ali{
\co{R_T} + \co{R_H} &\lsm B_\la^{-1} \plhxi^{5/2} \fr{e_v^{1/2} e_R^{1/2}}{N}. \label{eq:finallyBddRHT}
}
To sum the estimates we have also used the fact that the number of distinct cylinders of the form \eqref{eq:suppContain} that can intersect at a given point in space-time $(t,x)$ is bounded by an absolute constant.  To check this fact, note that if two cylinders indexed by $J$ and $J'$ intersect at a point $(t^*, x^*) \in \R \times \T^3$, then
\ali{
(t^*, x^*) \in \ECyl(\th, \Xi^{-1}; t(I), x_0(J) ) &\cap \ECyl(\th, \Xi^{-1}; t(I'), x_0(J') )  \\
\Rightarrow I = I' \mbox{ and } \quad (t(I), x_0(J)), (t(I), x_0(J')) \stackrel{\eqref{eq:duality}}{\in}& \LCyl(\th, \Xi^{-1}; t^*, x^*) \\
(t(I), x_0(J)), (t(I), x_0(J')) \stackrel{\eqref{eq:containCylsequiv}}{\in}& \ECyl(\th, e^{\th \co{\nb v_\ep}} \Xi^{-1}; t^*, x^*) \subseteq \ECyl(\th, 3 \Xi^{-1}; t^*, x^*).
}
The number of indices $J = (I,f)$ for which $(t(I), x_0(J))$ can belong to a given ball of radius $3 \Xi^{-1} \lsm \Pi^{-1}$ is bounded by an absolute constant by the construction of the cutoff functions.

We can now take $B_\la$ to be a sufficiently large number such that the right hand side of \eqref{eq:finallyBddRHT} is bounded by $\plhxi^{5/2} \fr{e_v^{1/2} e_R^{1/2}}{20 N}$ (and so that $\la = B_\la N \Xi \in \Z$ is an integer).  This choice achieves our desired bound for $\co{R_1}$ when combined with \eqref{eq:c0bdRMS}.  The desired bounds for higher derivatives
\ali{
\co{\nb_{\va} R_T} + \co{ \nb_{\va} R_H } &\lsm (N \Xi)^{|\va|} \plhxi^{5/2} \fr{e_v^{1/2} e_R^{1/2}}{N}, \qquad 1 \leq |\va| \leq 3,
}
now follow from \eqref{eqQHJDbds}, \eqref{sec:UTHJDbd}, \eqref{eq:derivRmdrBdSoln} and the observations concerning the overlaps of the cylinders \eqref{eq:suppContain}.  The assertions about the desired support of $R_1^{j\ell}$ asserted in Lemma~\ref{lem:convexInt} are clear from construction.

The proof of Lemma~\ref{lem:convexInt} will now be complete after explaining the proof of Proposition~\ref{prop:parametric}.

%following Proposition from \cite{IOnonpd} to both $U_{TJ,(D)}$ and 

%and the remainder terms satisfy
%\ali{
%\sup_{0 \leq |\va| \leq 3} \la^{-|\va|} (\co{\nb_{\va} U_{TJ, (D)}^{j\ell} } + \co{\nb_{\va} U_{HJ, (D)}^{j\ell} } ) &\lsm_D B_\la^{-1} N^{-D/2} \plhxi^{5/2} \Xi e_v^{1/2} e_R^{1/2} \\
%\supp U_{TJ, (D)} \cup \supp U_{HJ, (D)} &\subseteq \supp v_J^\ell
%}

\subsection{The Parametrix Expansion} \label{sec:paramExpand}
We now prove Proposition~\ref{prop:parametric} using the argument in the proof of \cite[Proposition 17.6]{isettOnsag}.  Let $U^\ell = u^\ell \om(\la \Ga_I)$ be given as in the assumptions of Proposition~\ref{prop:parametric}.  By Fourier-expanding $\om(X)$ as a function on $\T^3$, we have
\ali{
U^\ell &= \sum_{m \neq 0} \hat{\om}(m) e^{i \la \xi_m(t,x)} u^\ell(t,x) \label{eq:fourierExpand}
}
where $m \in \Z^3$ and $\xi_m(t,x) := m \cdot \Ga_I(t,x)$.  Following the proof of \cite[Proposition 17.6]{isettOnsag}, we set
\ali{
Q_{(D)}^{j\ell} &= \sum_{m \neq 0} \hat{\om}(m) Q_{(D),m}^{j\ell}, \qquad
Q^{j\ell}_{(D), m} := \la^{-1} \sum_{k=1}^D e^{i \la \xi_m} q_{(k),m}^{j\ell}. \label{eq:parametrix} 
}
The amplitudes $q_{(k),m}^{j\ell}$ are constructed inductively with a sequence of amplitudes $u_{(k),m}^\ell$ such that
\ali{
\begin{split}
i \nb_j \xi_m q_{(k),m}^{j\ell} &= u_{(k-1),m}^\ell \label{eq:algEqnforqjamp}\\
u_{(k),m}^\ell &= - \la^{-1} \nb_j q_{(k),m}^{j\ell}
\end{split}
}
and $u_{(0),m}^\ell = u^\ell$.  By \eqref{eq:fourierExpand},\eqref{eq:algEqnforqjamp} and induction on $D$, we then obtain
\ali{
\begin{split}
U^\ell &= \nb_j Q_{(D)}^{j\ell} + U_{(D)}^\ell \\
U_{(D)}^\ell &= \sum_{m \neq 0} \hat{\om}(m) e^{i \la \xi_m} u_{(D),m}^\ell. \label{eq:UDform}
\end{split}
}
More specifically, to solve \eqref{eq:algEqnforqjamp} we first choose smooth functions $\bar{q}_a^{j\ell}(p)$ of a variable $p \in \R^3 \setminus \{0\}$, symmetric in $j\ell$, such that each $\bar{q}_a^{j\ell}(p)$ is degree $-1$ homogeneous ($\bar{q}_a^{j\ell}(\a p) = \a^{-1} \bar{q}_a^{j\ell}(p)$ if $\a \in \R \setminus \{0\}$) and such that $i p_j \bar{q}_a^{j\ell}(p) = \de_a^\ell$ for all $p \neq 0$.  See \cite[Proposition 17.6]{isettOnsag} for an explicit example.  We then set $q_{(k),m}^{j\ell} := \bar{q}_a^{j\ell}(\nb \xi_m) u_{(k-1),m}^a$, so that \eqref{eq:algEqnforqjamp} is satisfied.  

From this construction we see that both $Q_{(D)}^\ell$ and $U_{(D)}^\ell$ have supported contained in $\supp u^\ell$.  We obtain the desired estimates for $Q_{(D)}^\ell$ and $U_{(D)}^\ell$ stated in Proposition~\ref{prop:parametric} from the formulas \eqref{eq:parametrix}  and \eqref{eq:UDform} by using the following bounds %from the proof of \cite[Proposition 17.6]{isettOnsag}
\ali{
\co{\nb_{\va} q_{(k),m}^{j\ell} } &\lsm N^{-(k-1)/2} N^{|\va|/2} \Xi^{|\va|} H_{D+3,0}[u], \quad \tx{ for all } 0 \leq |\va| \leq D - k + 4, \quad 1 \leq k \leq D \label{eq:qkparambdsiterate}\\
\co{\nb_{\va} u_{(k),m}^\ell } &\lsm B_\la^{-1} N^{-k/2} N^{|\va|/2} \Xi^{|\va|} H_{D+3,0}[u] \quad \tx{ for all } 0 \leq |\va| \leq D - k + 3, \quad 1 \leq k \leq D \label{eq:ampseqParambds} 
}
from the proof of \cite[Proposition 17.6]{isettOnsag} 
%established in the proof of \cite[Proposition 17.6]{isettOnsag} 
(where $H_{D+3,0}[u]$ is written simply as $H$), 
and by using the rapid decay of $|\hat{\om}(m)| \lsm (1+|m|)^{-40}$ to ensure convergence in the summation over $m \in \Z^3$.  (The main point in the estimate is that each spatial derivative of the sum costs at most a factor of $\la$.)% as expected.)

%, and summing in the formulas \eqref{eq:parametrix}  and \eqref{eq:UDform}, we obtain the estimates stated in Proposition~\ref{prop:parametric}.  The main point (see the proof of \cite[Proposition 17.6]{isettOnsag} for details) is that although each spatial derivative of the individual terms $e^{i\la \xi_m}$ costs a factor of $\la |\nb \xi_m| \lsm |m| \la$, the rapid decay of $\hat{\om}(m)$ allows one to sum over $m$ so that each spatial derivative costs a factor of $|\nb| \lsm \la$.

\section{Iterating the Main Lemma} \label{sec:iterateMainLem}
We now explain the proof of Theorem~\ref{thm:mainThm}.  Similar to other convex integration constructions, the theorem will be proven by repeatedly applying Lemma~\ref{lem:mainLem} to obtain a sequence of Euler-Reynolds flows $(v_{(k)}, p_{(k)}, R_{(k)})$ indexed by $k$ (with frequency energy levels bounded by $(\Xi_{(k)}, e_{v,(k)}, e_{R,(k)})$) that will converge uniformly to the solution $v$ stated in Theorem~\ref{thm:mainThm}.  Unlike previous works, we introduce here a new and sharper approach to estimating the regularity and to optimizing the choice of parameters governing the growth of frequencies.

% here is that the iteration and estimates will be done in a more careful manner so as to optimize the regularity of the construction.

To initialize the construction, we construct a smooth Euler-Reynolds flow $(v_{(1)}, p_{(1)}, R_{(1)})$ with compact support in time that satisfies
\ali{
\sup_{x \in \T^3} v_{(1)}(0, x) &\geq 10 \label{eq:nontriv0time}
}
and has frequency-energy levels (to order $3$ in $C^0$) bounded by $(\Xi_{(1)}, e_{R,(1)}, e_{R,(1)})$, where $\Xi_{(1)} = \hxi_{(1)}$ and $e_{R,(1)}$ are respectively large and small parameters that remain to be chosen.  One way to produce such an Euler-Reynolds flow is to apply the Main Lemma in the convex integration scheme of \cite{isett} (as was done in \cite{isettOnsag}).  This approach has some added benefits such as the ability arbitrarily large increases in energy within an arbitrarily small time interval \cite{isett}.  For the present purpose it will suffice to take a simpler approach.

We take $v_{(1)}$ to have the form $v_{(1)}^\ell = \psi(B^{-1} t) U^\ell$, where $\psi$ be a smooth cutoff with $\psi(0) = 1$ and $0 \leq \psi(t) \leq 1$ for all $t$, $B$ is a large parameter, and $U^\ell : \T^3 \to \R^3$ is a smooth vector field that satisfies %with mean $0$ on $\T^3$ that satisfies
\ali{
\int_{\T^3} U^\ell(x) dx = 0, \qquad \nb_\ell U^\ell = 0, \qquad \nb_j(U^j U^\ell) = 0, \qquad \sup_{x \in \T^3} U^\ell(x) \geq 10. \label{eq:needUlsample}
}
For example, one can take a sufficiently large Mikado flow for $U^\ell(x)$.  We then take $p_{(1)} = 0$ and $R_{(1)}$ to be a symmetric tensor that solves
\ali{
\nb_j R_{(1)}^{j\ell} &= \pr_t v_{(1)}^\ell = B^{-1} \psi'(B^{-1} t) U^\ell(X) \label{eq:divEqnR1st}
}
by applying an appropriate, degree $-1$ Fourier-multiplier to the right-hand side of \eqref{eq:divEqnR1st}.  The Euler-Reynolds flow $(v_{(1)}, p_{(1)}, R_{(1)})$ obtained in this way has frequency energy levels (to order $3$ in $C^0$) bounded by $(\overline{\Xi}, 1, e_{R,(1)} )$, where $\overline{\Xi}$ depends only on $U^\ell$, and where $e_{R,(1)} \lsm B^{-1}$ can be made arbitrarily small by taking $B$ large depending on $U^\ell$.  It follows from Definition~\ref{defn:frenlvls} that $(v_{(1)}, p_{(1)}, R_{(1)})$ also have frequency energy levels bounded by $(\Xi_{(1)}, e_{v,(1)}, e_{R,(1)}) := (\overline{\Xi} e_{R,(1)}^{-1/2}, e_{R,(1)}, e_{R,(1)} )$, where we have now fixed our choice of $\Xi_{(1)} := \overline{\Xi} e_{R,(1)}^{-1/2}$ in terms of the small parameter $e_{R,(1)}$ that remains to be chosen.

%Let $\hc$ and $C_L$ be the two constants of Lemma~\ref{lem:mainLem}.%, and for clarity rename the constant $C \mapsto C_L$.  Our solutions will be made nontrivial by choosing the above $e_{R,(1)}$ so that $8 C_L e_{R,(1)} \leq 5$.  

\subsection{Heuristics and deriving the optimization problem for the parameters} \label{sec:heuristics}

The sequence of frequency-energy levels $(\Xi, e_v, e_R)_{(k)}$ and Euler-Reynolds flows will now be determined by repeatedly applying Lemma~\ref{lem:mainLem} so that the following rules hold.  (Here $\hc$ and $C_L$ denote the two constants of Lemma~\ref{lem:mainLem} and $\hxi_{(k)} := (e_v/e_R)_{(k)}^{1/2} \Xi_{(k)}$.)
\ali{
\Xi_{(k+1)} &= \hc N_{(k)} \Xi_{(k)} \label{eq:Xik1} \\
e_{v,(k+1)} &= (\log \hxi_{(k)}) e_{R,(k)} \\
e_{R,(k+1)} &= \fr{e_{R,(k)}}{g_{(k)}} \\
N_{(k)} &= (\log \hxi_{(k)})^{A} \ever_{(k)}^{1/2} g_{(k)}, \qquad A := 5/2. \label{eq:nkdef}
}
The sequence $g_{(k)} > 1$ describes the ``gain'' in the size of the error after stage $k$, and the sequence of frequency growth parameters $N_{(k)}$ is determined by inequality \eqref{eq:newFreqEn} in Lemma~\ref{lem:mainLem} so that this choice of $N_{(k)}$ achieves the desired gain.  %We write $\hxi_{(k)} = (e_v/e_R)_{(k)}^{1/2} \Xi_{(k)}$.  
To work with the estimate \eqref{ineq:coBdV}, it will also be useful to impose that
\ali{
(\log \hxi_{(k+1)})^{1/2} e_{R,(k+1)}^{1/2} \leq \fr{1}{2} (\log \hxi_{(k)})^{1/2} e_{R,(k)}^{1/2}, \mbox{ for all } k \geq 1. \label{eq:estimateXis}
}
%Once $e_{R,(1)}$ is chosen sufficiently small and \eqref{eq:estimateXis} has been verified, 
The Euler-Reynolds flows constructed by repeatedly applying Lemma~\ref{lem:mainLem} using the above choice of parameters $N_{(k)}$ will converge uniformly to the velocity field $v^\ell = v_{(1)}^\ell + \sum_{k = 1}^\infty V_{(k)}^\ell$. Assuming \eqref{eq:estimateXis}, which is verified in Proposition~\ref{prop:gotShrinking} below, this solution will be nontrivial and continuous for $e_{R,(1)}$ chosen small enough (depending on $\overline{\Xi}$, $\hc$ and $C_L$) thanks to \eqref{eq:nontriv0time} and 
\ali{
\sum_{k = 1}^\infty \co{V_{(k)}^\ell} &\stackrel{\eqref{ineq:coBdV},\eqref{eq:estimateXis}}{\leq} \sum_{k = 0}^\infty C_L (\log \hxi_{(1)})^{1/2} e_{R,(1)} 2^{-k} \leq 5. \label{eq:nontrivBound}
}
%which holds assuming \eqref{eq:estimateXis}.  
As $R_{(k)}$ converges uniformly to $0$, one has from the Euler-Reynolds system that the associated sequence of pressures $p_{(k)} = \De^{-1} \nb_j \nb_\ell(R_{(k)}^{j\ell} - v_{(k)}^j v_{(k)}^\ell)$ converge weakly in $\DD'(\R \times \T^3)$ to $p = -\De^{-1} \nb_j \nb_\ell(v^j v^\ell)$, and that the pair $(v,p)$ form a weak solution to the Euler equations.

Our goal is now to choose $g_{(k)}$ that optimize the regularity of the solution $v$.  The key evolution rule that isolates $1/3$ as the limiting regularity and plays a key role in our analysis will be the following:
\ali{
\de_{(k)} \left( \fr{1}{3} \log \hxi_{(k)} + \fr{1}{2} \log e_{R,(k)} \right) &= \left(\fr{A}{3} + \fr{1}{6}\right) \log \log \hxi_{(k)} + \log \hc. \label{eq:keyEvolRule}
}
Here $\de_{(k)}[f_{(k)}] = f_{(k+1)} - f_{(k)}$ is the discrete differencing operator and $A = \fr{5}{2}$.  A crucial point is that \eqref{eq:keyEvolRule} holds for {\it all} possible choices of $g_{(k)}$.
  %Note that the parameter $\fr{5}{6}$ appears here due to the power $5/2$ in \eqref{eq:nkdef}.  

With the goal of computing regularity in mind, suppose $\De x \in \R^3$ with, say, $0 < |\De x| \leq 10^{-2}$.  Writing $v = v_{\brk} + \sum_{k\geq \bk} V_{(k)} $ and $L_k := \log \hxi_{(k)}$, we can bound $|v(t,x+\De x) - v(t,x)|$ using \eqref{eq:estimateXis} by
\ali{
|v(t,x+\De x) - v(t,x)| &\leq \co{\nb v_{\brk}} |\De x| + \sum_{k \geq \bk} 2 \co{ V_{(k)}} \notag \\
%&\stackrel{\eqref{eq:estimateXis}}{\leq} \Xi_{\brk} e_{v,\brk}^{1/2} |\De x| + 4 C_L (\log \hxi_{\brk})^{1/2} e_{R,\brk}^{1/2} \notag \\
&\leq \Xi_{\brk} e_{v,\brk}^{1/2} |\De x| + 4 C_L (\log \hxi_{\brk})^{1/2} e_{R,\brk}^{1/2} \notag \\
|v(t,x+\De x) - v(t,x)|&\leq 4 C_L L_{\bk} \left(\hxi_{\brk} |\De x| + 1 \right) e_{R,\brk}^{1/2}. \label{eq:regEstimate}
}
The estimate is optimized by choosing $\bk$ to be the largest value $k$ for which $\hxi_{(k)} |\De x| \leq 1$.  Now assuming $\bk$ has been chosen as this value, the estimate \eqref{eq:regEstimate} leads to
\ali{
|v(t,x+\De x) - v(t,x)| &\leq 8 C_L L_{\bk} e_{R,\brk}^{1/2} = 8 C_LL_{\bk} \hxi_{\brk}^{-1/3} \mbox{exp}\left(\fr{1}{3} \log \hxi_{\brk} + \fr{1}{2} \log e_{R,\brk} \right) \notag \\
&\lsm L_{\bk} \hxi_{(\bk+1)}^{-1/3} \mbox{exp}\left(\fr{1}{3}\de_{(k)} \log \hxi_{(k)}\Big|_{k = \bk} + \fr{1}{3} \log \hxi_{(k)} + \fr{1}{2} \log e_{R,(k)} \right) \notag \\
|v(t,x+\De x) - v(t,x)| &\lsm |\De x|^{1/3} L_{\bk} \mbox{exp}\left(\fr{1}{3}\de_{(k)} \log \hxi_{(k)}\Big|_{k = \bk} + \fr{1}{3} \log \hxi_{(k)} + \fr{1}{2} \log e_{R,(k)} \right). \label{eq:wantToMin} % \\
%&\lsm \hxi_{(1)}^{-1/3} e_{R,(1)}^{1/2} |\De x|^{1/3}\mbox{exp}\left(\de_{(k)} \log \hxi_{(k)}\Big|_{k = \bk} + \sum_{k=1}^{\bk} \de_{(k)}\left( \fr{1}{3} \log \hxi_{(k)} + \fr{1}{2} \log e_{R,(k)} \right)\label{eq:wantToMin}
}
Using \eqref{eq:keyEvolRule} to expand $\fr{1}{3} \log \hxi_{(k)} + \fr{1}{2} \log e_{R,(k)}$, we minimize the right hand side of \eqref{eq:wantToMin} if we minimize
\ali{
H_{\bk} := \left(\fr{1}{3}(\log \hxi_{(\bk+1)} - \log \hxi_{(\bk)} ) + \sum_{k=1}^{\bk-1} (\log \log \hxi_{(k)} + \log \hc) \right). \label{eq:balanceThese}
}
The expression \eqref{eq:balanceThese} now reveals the optimization problem for choosing $g_{(k)}$.  Namely, to control the term $\de_{(k)} \log \hxi_{(k)}$ the frequencies should not grow too quickly.  However, a slow growth of frequencies produces a long summation and a poor estimate for the sum as the construction is iterated many times before achieving a given length scale.  Intuitively, the best estimate should be achieved if the two terms are balanced, which suggests the parameters $L_k = \log \hxi_{(k)}$ should satisfy the discrete version of the equation $\fr{dL}{dk} = 3 \int_1^k (\log L(\kappa) + c) d\kappa$, whose solutions grow like $L_k = \left(3 + o(1)\right)k^2\log k$ at infinity.

\subsection{Parameter asymptotics and optimization}

With this motivation, we take $g_{(k)} = e^{\ga k \log k}$, where $\ga > 0$ is a parameter that will be chosen to optimize the regularity.  To simplify the algebra we can restrict to $k \geq 2$ by assuming that the Euler-Reynolds flows $(v_{(1)}, p_{(1)}, R_{(1)}) = (v_{(2)}, p_{(2)}, R_{(2)})$ and their frequency energy levels are equal.  %Our heuristics above predict that the optimal choice of $\ga$ will correspond to $\ga = 4$.

Before estimating the regularity, we wish to fix our choice of the parameter $e_{R,(1)}$ that dictates the initial frequency energy levels.  We therefore verify the assumption \eqref{eq:estimateXis} (restricting now to $\ga \geq 2$).
\begin{prop} \label{prop:gotShrinking} If $\ga \geq 2$ and $e_{R,(1)}$ is small enough depending on $\hc$, then \eqref{eq:estimateXis} holds for all $k \geq 2$.
\end{prop}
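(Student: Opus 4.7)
The plan is to rewrite \eqref{eq:estimateXis} as a one-variable recursive inequality in $L_k := \log \hxi_{(k)}$ and then exploit the super-exponential growth of $g_{(k)} = e^{\gamma k \log k}$ against the polynomial growth of $L_k$.

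First, I would derive an explicit recursion for $\hxi_{(k+1)}$ from \eqref{eq:Xik1}--\eqref{eq:nkdef} together with $\hxi_{(k)} = (e_v/e_R)^{1/2}_{(k)} \Xi_{(k)}$. Using the defining rules one computes $e_{v,(k+1)}/e_{R,(k+1)} = L_k g_{(k)}$ and $\Xi_{(k+1)} = \hc L_k^A (e_v/e_R)^{1/2}_{(k)} g_{(k)} \Xi_{(k)}$; with $A = 5/2$ this yields
\[
\hxi_{(k+1)} = \hc\, L_k^{3}\, g_{(k)}^{3/2}\, \hxi_{(k)}, \qquad L_{k+1} = L_k + \log \hc + 3 \log L_k + \tfrac{3}{2}\gamma k \log k.
\]
Squaring \eqref{eq:estimateXis} and using $e_{R,(k+1)}/e_{R,(k)} = 1/g_{(k)}$ converts it into the equivalent inequality $L_{k+1} \le \tfrac{1}{4} g_{(k)} L_k$, that is,
\[
\log \hc + 3 \log L_k + \tfrac{3}{2}\gamma k \log k \le \bigl(\tfrac{1}{4}\, g_{(k)} - 1\bigr) L_k. \qquad (\ast)
\]

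Next I would prove $(\ast)$ by induction on $k \ge 2$. Because every term in the recursion above is positive, $L_k$ is nondecreasing; iterating once gives the a priori bound $L_k = O(\gamma k^2 \log k)$, so $\log L_k = O(\log k)$ and the left-hand side of $(\ast)$ is only $O(\gamma k \log k)$. For $\gamma \ge 2$ and $k \ge 2$ we have $g_{(k)} \ge g_{(2)} = e^{4 \log 2} = 16$, so $\tfrac{1}{4} g_{(k)} - 1 \ge 3$, and since $g_{(k)} = k^{\gamma k}$ grows super-exponentially while the left-hand side of $(\ast)$ grows only polynomially, $(\ast)$ has enormous slack once $k$ exceeds an absolute threshold $k_\ast = k_\ast(\hc, \gamma)$.

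To close the induction I would handle the finitely many base cases $2 \le k \le k_\ast$ by choosing $e_{R,(1)}$ small depending on $\hc$. Since $\hxi_{(1)} = \bar{\Xi}\, e_{R,(1)}^{-1/2}$, one has $L_2 = L_1 = \log \bar{\Xi} + \tfrac{1}{2} \log(1/e_{R,(1)})$, which can be made as large as we please by shrinking $e_{R,(1)}$; by monotonicity this forces $L_k \ge L_2$ for every $k \ge 2$, and a sufficiently large $L_2$ immediately verifies $(\ast)$ for each of the finitely many base values since the right-hand side of $(\ast)$ is then at least $3 L_2$ while the left-hand side is a fixed function of $\hc, \gamma, k$. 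The main obstacle is really none of substance: the proof is simply the bookkeeping of observing that the super-exponential compensation $g_{(k)}$ engineered into the scheme dwarfs the polynomial growth of $L_k$, with the initial parameter $e_{R,(1)}$ absorbing the contributions from small $k$.
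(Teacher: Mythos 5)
Your proposal is correct in its conclusion and works along similar lines, but the precise route differs from the paper's in a way worth noting. The paper's proof is threshold-free: after taking logs, the quantity to bound is $\tfrac12 \de_{(k)}\log\log\hxi_{(k)} - \tfrac12\log g_{(k)}$, and the paper controls the first term by concavity of $\log$, obtaining $\de_{(k)}\log\log\hxi_{(k)} \leq L_k^{-1}\log(Z_k g_{(k)}^{3/2})$ with $Z_k = \hc L_k^{A+1/2}$. The crucial observation is then that the $\tfrac{3}{4L_k}\log g_{(k)}$ piece is absorbed into the subtracted $\tfrac12\log g_{(k)}$ once $L_k \geq L_1$ is modestly large (the paper needs $L_k \geq 9/2$), leaving $\fr{\log Z_k}{L_k} - \tfrac13 \log g_{(k)}$, which is controlled \emph{uniformly in $k$} by monotonicity of $f(\Xi) = \fr{\log(\hc(\log\Xi)^{A+1/2})}{\log\Xi}$ and of $g_{(k)}$: only the value at $k=2$ matters. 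In particular the paper never invokes the super-exponential growth of $g_{(k)}$; it only uses that $g_{(k)}$ is nondecreasing with $g_{(2)} = 4^\ga$ large enough. Your proof instead rewrites the squared inequality as the linear recursion $(\ast)$ for $L_k$ and splits into regimes, invoking super-exponential domination for $k \geq k_*$ and largeness of $L_1$ for the finitely many small $k$. Both work, but the paper's route is a bit more economical and more robust to the choice of $g_{(k)}$.

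One small imprecision you should patch: in the base-case step you assert that ``the left-hand side of $(\ast)$ is a fixed function of $\hc,\ga,k$,'' but it contains $3\log L_k$ with $L_k \geq L_1$, so it does grow with your choice of $L_1$. The argument still closes because $\frac{d}{dL_k}\bigl[(\tfrac14 g_{(k)}-1)L_k - 3\log L_k\bigr] = \tfrac14 g_{(k)} - 1 - \tfrac{3}{L_k} \geq 0$ once $g_{(k)} \geq 16$ and $L_k \geq 1$, so the right side minus the left side is nondecreasing in $L_k$ and it suffices to verify $(\ast)$ at $L_k = L_1$; but you should say this rather than treating the left-hand side as constant. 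The same care is needed in your statement ``$L_k = O(\ga k^2\log k)$, so $\log L_k = O(\log k)$'': the correct bound is $L_k = O(L_1 + \ga k^2\log k)$, hence $\log L_k = O(\log L_1 + \log k)$, and it is the linear-versus-logarithmic comparison just mentioned that lets the argument survive the $L_1$-dependence. These are fixable bookkeeping points, not structural gaps.
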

\begin{proof} Taking logs of \eqref{eq:estimateXis}, it suffices to bound the quantity
\ali{
\fr{1}{2} \de_{(k)} \log \log \hxi_{(k)} + \fr{1}{2} \de_{(k)} \log e_{R,(k)} &= \fr{1}{2} \de_{(k)} \log \log \hxi_{(k)} - \fr{1}{2} \log g_{(k)} \label{eq:gottaBeShrinking} %\\
%\de_{(k)} \log \log \hxi_{(k)} &= \log \left( \log \hxi_{(k)} + \log (Z_k g_{(k)} ) \right) - \log \log \hxi_{(k)} \\
%Z_k := \hc(\log \hxi_{(k)})^{A +1/2}.
}
by $- \log 2$ uniformly in $k$.
%by $- \log 2$ for all $e_{R,(1)}$ sufficiently small.  

Towards this goal, we set $Z_k := \hc(\log \hxi_{(k)})^{A +1/2}$ to be the lower order factor from \eqref{eq:Xik1}, \eqref{eq:nkdef}. Linearizing $\log(\cdot)$ around $L_k := \log \hxi_{(k)}$ and using \eqref{eq:Xik1}-\eqref{eq:nkdef} and concavity, we have 
 %$- \log 2$ for $e_{R,(1)}$ sufficiently small.  Letting, we have
\ali{
\de_{(k)} \log \log \hxi_{(k)} &= \log \left( \log \hxi_{(k)} + \log (Z_k g_{(k)}^{3/2} ) \right) - \log \log \hxi_{(k)} \notag \\
\de_{(k)} \log \log \hxi_{(k)} &\leq \fr{\log (Z_k g_{(k)}^{3/2} )}{\log \hxi_{(k)}}.  \label{eq:loglogGrowth} 
}
%Using the lower bound $\hxi_{(k)} \geq \Xi_{(k)} \geq \Xi_{(1)} = \overline{\Xi} e_{R,(1)}^{-1/2}$ 
We now substitute \eqref{eq:loglogGrowth} into \eqref{eq:gottaBeShrinking} and take $e_{R,(1)}$ small to ensure that $\hxi_{(k)} \geq \Xi_{(k)} \geq \Xi_{(1)} = \overline{\Xi} e_{R,(1)}^{-1/2}$ is large enough so that the following bound holds for all $k \geq 2$
\ali{ 
\eqref{eq:gottaBeShrinking} &\leq \fr{ \log Z_k }{\log \hxi_{(k)}} - \fr{1}{3} \log g_{(k)}. \label{eq:intermShrinking}
}
Taking $e_{R,(1)}$ smaller and hence $\Xi_{(1)}$ larger, we can ensure that the function $f(\Xi) := \fr{\log(\hc(\log \Xi)^{A+1/2})}{\log \Xi}$ is decreasing in $\Xi$ on the interval $\Xi \in [\Xi_{(1)}, \infty)$.  From $\hxi_{(k)} \geq \Xi_{(1)}$ and \eqref{eq:intermShrinking} we obtain
\ali{
\eqref{eq:intermShrinking} &\leq \fr{\log( \hc (\log \Xi_{(1)})^{A + 1/2} )}{\log \Xi_{(1)}} - \fr{1}{3} \log g_{(2)}, \qquad \mbox{ for all } k \geq 2. \label{eq:timeToChooseer1}
}
We have that $- (1/3) \log g_{(2)} = -(2\ga/3) \log 2 \leq - (4/3) \log 2$.  Taking $e_{R,(1)}$ small and thus $\Xi_{(1)}$ large, we can bound \eqref{eq:timeToChooseer1} and therefore \eqref{eq:gottaBeShrinking} by $- \log 2$, which establishes Proposition~\ref{prop:gotShrinking}.
\end{proof}
At this point, we choose $e_{R,(1)}$ sufficiently small (depending on $\hc$ and $C_L$) to satisfy the assumptions of Proposition~\ref{prop:gotShrinking} and such that \eqref{eq:nontrivBound} holds.

With the initial frequency energy levels determined, we now turn to the asymptotics of the frequency energy levels for large $k$.  These asymptotics are summarized as follows.
%\footnote{We could also take $g_{(k)} = 2e^{\ga k \log k}$ if we wish to strictly enforce that $g_{(k)} \geq 2$.}.  This choice leads to the following asymptotics:
\begin{prop} For all $k \geq 3$ and the above choice of $g_{(k)}$, we have the following asymptotics
\ali{
- \log e_{R,(k)} &= \fr{\ga k^2}{2} \log k + O(k \log k) \label{eq:eRkasympt} \\
\fr{1}{2} \log \ever_{(k)} &= \fr{1}{2} \ga k \log k + O(\log k) \label{eq:everkAsymp} \\
\de_{(k)} \log \hxi_{(k)} &= \fr{3}{2} \ga k \log k + O(\log k) \label{eq:freqChangeRate} \\
\log \hxi_{(k)} &= \fr{3}{2} \fr{\ga k^2}{2} \log k + O(k \log k) \label{eq:loghxiasymp} \\
\log \log \hxi_{(k)} &= 2 \log k + O(1) \label{eq:loglogasymp}\\
\fr{1}{3} \log \hxi_{(k)} + \fr{1}{2} \log e_{R,(k)} &= 2 \left(\fr{A}{3} + \fr{1}{6} \right) k \log k + O(k) \label{eq:loghxierkasmp}
}
together with the following bounds
\ali{
(\log \hxi_{(k)})^{-1} &= O(k^{-2} (\log k)^{-1}) \label{eq:lowerFreqBd}\\
\log \log \hxi_{(k)} &= O(\log k). \label{eq:loglogBd} %\\
%\log \hxi_{(k)} &= O(k^2 \log k) \label{eq:loghxiBd} \\
%\log (e_v/e_R)_{(k)} &= O(k \log k) \label{eq:boundeverk} 
}
Here the implicit constants in the $O(\cdot)$ notation depend only on $\hc, \ga$, $\Xi_{(1)}, e_{R,(1)}$, and $A = 5/2$.
\end{prop}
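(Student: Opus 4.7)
The plan is to first derive a closed-form recursion for $\log \hxi_{(k)}$ directly from the definitions \eqref{eq:Xik1}--\eqref{eq:nkdef}. Using $\log \hxi_{(k)} = \tfrac{1}{2}\log \ever_{(k)} + \log \Xi_{(k)}$ together with the identity $\log \ever_{(k+1)} = \log\log \hxi_{(k)} + \log g_{(k)}$ (which follows from $\ever_{(k+1)} = (\log \hxi_{(k)}) g_{(k)}$) and $\log \Xi_{(k+1)} = \log \Xi_{(k)} + \log N_{(k)} + \log \hc$, a short algebraic manipulation yields
\[
\delta_{(k)} \log \hxi_{(k)} \;=\; \bigl(A + \tfrac{1}{2}\bigr) \log\log \hxi_{(k)} \;+\; \tfrac{3}{2} \log g_{(k)} \;+\; \log \hc \;=\; 3 \log\log \hxi_{(k)} \;+\; \tfrac{3}{2}\gamma k \log k \;+\; \log \hc.
\]
This identity, together with the trivial recursion $-\log e_{R,(k+1)} = -\log e_{R,(k)} + \gamma k \log k$, will drive the entire analysis; all remaining asymptotics follow by iteration combined with the standard sum estimate $\sum_{j=2}^{k-1} j \log j = \tfrac{k^2 \log k}{2} + O(k^2)$.

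I would carry out the steps in the following order. First, \eqref{eq:eRkasympt} is immediate by telescoping and Euler--Maclaurin applied to $\sum \gamma j \log j$. Next I would establish \eqref{eq:loglogBd} by a bootstrap induction: inserting any crude polynomial upper bound $\log \hxi_{(k)} = O(k^{q})$ into the recursion produces $\delta_{(k)} \log \hxi_{(k)} = \tfrac{3}{2}\gamma k \log k + O(\log k)$, and telescoping then returns $\log \hxi_{(k)} = O(k^2 \log k)$, hence $\log\log \hxi_{(k)} = O(\log k)$, which closes the induction with a self-improved constant. Feeding this sharpened bound back into the recursion yields \eqref{eq:freqChangeRate}, and one more summation delivers \eqref{eq:loghxiasymp}. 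The refinement \eqref{eq:loglogasymp} follows by taking logs of \eqref{eq:loghxiasymp}, while \eqref{eq:lowerFreqBd} is an immediate corollary. For \eqref{eq:everkAsymp} I use $\log \ever_{(k+1)} = \log\log \hxi_{(k)} + \log g_{(k)} = \gamma k \log k + O(\log k)$ and halve. Finally, \eqref{eq:loghxierkasmp} can be obtained either by adding \eqref{eq:loghxiasymp} and \eqref{eq:eRkasympt}, or more transparently by summing the key evolution identity \eqref{eq:keyEvolRule} after substituting \eqref{eq:loglogasymp}.

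The only nontrivial step is the bootstrap for \eqref{eq:loglogBd}. The subtlety is purely one of tracking inductive constants: one must verify that the self-improving estimate closes uniformly in $\hc$, $\gamma$, $\Xi_{(1)}$, and $e_{R,(1)}$, all of which have already been fixed by Proposition~\ref{prop:gotShrinking} and the preceding discussion. Everything else is a mechanical asymptotic computation once the leading-order growth of $\log \hxi_{(k)}$ has been pinned down.
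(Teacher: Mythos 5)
Your key identity
\[
\de_{(k)}\log\hxi_{(k)} \;=\; \bigl(A+\tfrac12\bigr)\log\log\hxi_{(k)} + \tfrac32\log g_{(k)} + \log\hc
\]
is exactly the relation the paper uses, and your overall strategy (telescope the exact recursion, use Euler--Maclaurin, then feed the resulting asymptotics back in) is in the same spirit.  However there are three concrete gaps/discrepancies worth flagging.

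First, your bootstrap for \eqref{eq:loglogBd} does not have a starting point. You say to ``insert any crude polynomial upper bound $\log\hxi_{(k)}=O(k^q)$,'' but it is not a priori clear that such a bound exists: the increment contains $(A+\tfrac12)\log\log\hxi_{(k)}$, so one must first rule out super-polynomial growth before a polynomial-to-polynomial self-improvement can even begin. One can do this directly (prove $\log\hxi_{(k)}\le C\,k^2\log k$ by induction, absorbing the $\log\log\hxi$ contribution), but the paper's route is cleaner and worth comparing: it first proves the \emph{lower} bound \eqref{eq:lowerFreqBd}, which is trivial since $\log\hxi_{(k+1)}\ge\log\hxi_{(k)}+\log g_{(k)}$ telescopes to $\log\hxi_{(k)}\gtrsim k^2\log k$, and then uses this to dominate $\de_{(k)}\log\log\hxi_{(k)}\le L_k^{-1}(\log Z_k+\tfrac32\log g_{(k)})$, giving a discrete Gronwall that closes without any a priori upper bound. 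This is why the paper places \eqref{eq:lowerFreqBd} \emph{before} \eqref{eq:loglogBd} in its order of proof, whereas you defer it to a corollary — that reordering breaks the argument.

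Second, your first proposed derivation of \eqref{eq:loghxierkasmp}, ``adding \eqref{eq:loghxiasymp} and \eqref{eq:eRkasympt},'' does not yield the stated $O(k)$ (or even $O(k\log\log k)$) error. Both \eqref{eq:loghxiasymp} and \eqref{eq:eRkasympt} carry errors that are at least $O(k^2)$ (indeed your own Euler--Maclaurin estimate is $\sum j\log j=\tfrac{k^2}{2}\log k+O(k^2)$), while the leading term of \eqref{eq:loghxierkasmp} is only $O(k\log k)$; adding the two asymptotics therefore annihilates the quantity you want to compute.  The cancellation of the $O(k^2)$ errors is real but invisible at the level of the stated big-$O$ bounds — one must sum the \emph{exact} increment \eqref{eq:keyEvolRule}, which is precisely what the paper (and your second proposed option) does.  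The first option should be dropped.

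Third, ``taking logs of \eqref{eq:loghxiasymp}'' does not reproduce \eqref{eq:loglogasymp} as written.  From $\log\hxi_{(k)}=\tfrac{3\ga}{4}k^2\log k\,(1+o(1))$ one gets
\[
\log\log\hxi_{(k)}=2\log k+\log\log k+O(1),
\]
with an additional $\log\log k$.  This is not cosmetic: the stated \eqref{eq:loglogasymp} omits a divergent term, and the paper's own proof relies on \eqref{eq:loghxiasymp} with an $O(k\log k)$ error that Euler--Maclaurin actually improves only to $O(k^2)$, which propagates the $\log\log k$.  Your direct computation would surface this discrepancy, which then also propagates an extra $O(k\log\log k)$ into \eqref{eq:loghxierkasmp}.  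None of this affects the leading constant in Theorem~\ref{thm:mainThm}, since in \eqref{eq:timeToOptimize} the extra contribution is $O\bigl(\tfrac{\log\log\bk}{\bk\log\bk}\bigr)$, strictly lower order than the main term $\tfrac{4}{3\ga}(\tfrac{\ga}{2}+2)\bk^{-1}$; but if you intend to prove the Proposition as literally stated you should reconcile this, since a naive log of \eqref{eq:loghxiasymp} will not give $2\log k+O(1)$.
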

The proof will proceed by induction on $k \geq 3$ and will use some extra notation for the induction.  We write $C_{\eqref{eq:eRkasympt}}, \ldots,C_{\eqref{eq:loglogBd}}$ to refer to the implicit constants in the Big-$O$ notation in the Proposition.  For example the term in \eqref{eq:loglogBd} is bounded by $|O(\log k)| \leq C_{\eqref{eq:loglogBd}} \log k$.  %We will also use the notation $O_{\eqref{eq:loglogasymp}}(\cdot)$ to denote bounds with implicit constants depending on the constant $C_{\eqref{eq:loglogasymp}}$, and similarly for the other equations.  
We assume at the onset that all the constants $C_{\eqref{eq:eRkasympt}}, \ldots,C_{\eqref{eq:loglogBd}}$ are sufficiently large depending on $\Xi_{(1)}$ and $e_{R,(1)} = e_{v,(1)}$ such that the bounds \eqref{eq:eRkasympt}-\eqref{eq:loglogBd} hold for $k = 3$.  The proof will make use of the Taylor expansion formula
\ali{
f(X + Y) = f(X) + Y \int_0^1 f'(X + \si Y) d\si = f(X) + f'(X) Y + Y^2 \int_0^1 (1-\si) f''(X + \si Y) d\si. \label{eq:taylorExpand}
}
\begin{proof}[Proof of \eqref{eq:eRkasympt}] The equality follows from the evolution rule $\log e_{R,(k+1)} = - \log g_{(k)} + \log e_{R,(k)}$ and
\ali{
\sum_{1 \leq I \leq k} \log g_{(I)} &= \sum_{1 \leq I \leq k} \ga I \log I = \fr{\ga k^2}{2} \log k + O(k \log k), \qquad k \geq 3
}
(where the constant above depends on $\ga$).% and 
\begin{comment}
 The estimate holds for $k = 3$.  Assuming the bound for $k$, we have 
\ali{
- \log e_{R,(k+1)} &= \log g_{(k)} - \log e_{R,(k)} \notag \\
&= \ga k \log k + \fr{\ga k^2}{2} \log k + O(k \log k) \notag \\
&= \fr{\ga (k + 1)^2}{2} \log k - \ga \log k + O(k \log k) \notag \\
- \log e_{R,(k+1)} &= \fr{\ga (k + 1)^2}{2} \log (k + 1) - \fr{\ga (k + 1)^2}{2} \de_{(k)} \log k - \ga \log k + O(k \log k), \label{eq:newerkbound}
}
where the implicit constant in the $O$ notation is $C_{\eqref{eq:eRkasympt}}$.  Choose $A_\ga \geq 0$ such that 
\ali{
\left|- \fr{\ga (k + 1)^2}{2} \de_{(k)} \log k - \ga \log k \right| \leq A_\ga \log k \quad \mbox{ for all } k \geq 3. \notag
}  
Taking $C_{\eqref{eq:eRkasympt}}$ larger than $A_\ga$, we conclude \eqref{eq:eRkasympt} for $k + 1$ from \eqref{eq:newerkbound} by the inequality 
\ali{
A_\ga \log k + C_{\eqref{eq:eRkasympt}} k \log k  \leq C_{\eqref{eq:eRkasympt}} (k + 1) \log (k+1). 
}
\end{comment}
\end{proof}
%Taking $C_{\eqref{eq:eRkasympt}}$ larger than $A_\ga$ suffices.
%which proves the bound for state $k + 1$ (with the same implicit constant).
\begin{proof}[Proof of \eqref{eq:lowerFreqBd}]  From $\log \hxi_{(k+1)} \geq \log g_{(k)} + \log \hxi_{(k)}$, we have $k^2 \log k \lsm \sum_{3 \leq I \leq k} \log g_{(I)}  \leq \log \hxi_{(k)}.$%the bound \eqref{eq:lowerFreqBd} follows from 
%\ALI{
%k^2 \log k \lsm \sum_{3 \leq I \leq k} \log g_{(I)}  \leq \log \hxi_{k}.
%}
\end{proof}

\begin{proof}[Proof of \eqref{eq:loglogBd}]  Let $L_k := \log \hxi_{(k)}$ and $Z_k = \hc(\log \hxi_{(k)})^{A +1/2}$.  Then for some $A_0 \geq 1$ and all $k \geq 3$,
\ali{
\de_{(k)} \log \log \hxi_{(k)} &= \log ( L_k + \log(Z_k g_{(k)}^{3/2}) ) - \log L_k \notag \\
\de_{(k)} \log \log \hxi_{(k)} &\leq L_k^{-1} (\log Z_k + \log g_{(k)}^{3/2} ) \leq A_0 C_{\eqref{eq:lowerFreqBd}} (k^{-2} (\log k)^{-1} \log \log \hxi_{(k)} + k^{-1}).
}
Choose $k^* = k^*(C_{\eqref{eq:lowerFreqBd}})$ large so that $A_0 C_{\eqref{eq:lowerFreqBd}}k^{-2} \leq 10^{-1} \de_{(k)} \log k$ for all $k \geq k^*$ and assume that $C_{\eqref{eq:loglogBd}}$ is large enough so that \eqref{eq:loglogBd} holds for $k \leq k^*$.  

We now proceed by induction on $k$ to obtain \eqref{eq:loglogBd} for $k > k^*$.  Assuming \eqref{eq:loglogBd} for $k$, we have  %If we take $C_{\eqref{eq:loglogBd}}$ even larger we have \eqref{eq:loglogBd} for $k$, and% we have %that \eqref{eq:boundInChange} is bounded by $\fr{1}{2} C_{\eqref{eq:loglogBd}} \de_{(k)} \log k$, which implies \eqref{eq:loglogBd} for $k \geq k^*$ by induction.
\ali{
\de_{(k)} \log \log \hxi_{(k)}&\leq 10^{-1} C_{\eqref{eq:loglogBd}} \de_{(k)} \log k  +  A_0 C_{\eqref{eq:lowerFreqBd}} k^{-1} \leq C_{\eqref{eq:loglogBd}} \de_{(k)} \log k, \qquad \mbox{ for } k \geq k^* \label{eq:boundInChange}
}
if $C_{\eqref{eq:loglogBd}}$ is sufficiently large, which implies \eqref{eq:loglogBd} for $k + 1$, and thus for all $k \geq k^*$ by induction.
%Choose $k^* = k^*(C_{\eqref{eq:lowerFreqBd}})$ large so that $A C_{\eqref{eq:lowerFreqBd}}k^{-2} \leq 10^{-1} \de_{(k)} \log k$ and assume that $C_{\eqref{eq:loglogBd}}$ is large enough so that \eqref{eq:loglogBd} holds for $k \leq k^*$.  If we take $C_{\eqref{eq:loglogBd}}$ even larger, we have that \eqref{eq:boundInChange} is bounded by $\fr{1}{2} C_{\eqref{eq:loglogBd}} \de_{(k)} \log k$, which implies \eqref{eq:loglogBd} for $k \geq k^*$ by induction.
\end{proof}
\begin{comment}
\begin{proof}[Proof of \eqref{eq:loghxiBd},\eqref{eq:boundeverk} ]  For $C_{\eqref{eq:loglogBd}}$ large enough, \eqref{eq:loghxiBd} follows from
\ali{
\fr{1}{3} \de_{(k)} \log \hxi_{(k)} &= \de_{(k)} \left(\fr{1}{3} \log \hxi_{(k)} + \fr{1}{2} \log e_{R,(k)} \right) - \fr{1}{2} \de_{(k)} \log e_{R,(k)} \\
&\stackrel{\eqref{eq:keyEvolRule}}{=} \left(\fr{A}{3} + \fr{1}{6}\right) \log \log \hxi_{(k)} + \log \hc + \fr{\ga}{2} k \log k \\
&\leq A C_{\eqref{eq:loglogBd}} \log k + \fr{\ga}{2} k \log k \leq \fr{1}{3} C_{\eqref{eq:loghxiBd}} \de_{(k)}[k^2 \log k]
}
and induction on $k \geq 3$.  We obtain \eqref{eq:boundeverk}, from \eqref{eq:loglogBd} and $\log (e_v/e_R)_{(k+1)} = \log \log \hxi_{(k)} + \log g_{(k)}$.
%Taking $C_{\eqref{eq:loghxiBd}}$ sufficiently large, we have \eqref{eq:loghxiBd} for all $k \geq 3$ by induction.
\end{proof}
\end{comment}
\begin{proof}[Proof of \eqref{eq:everkAsymp}]  The equality follows from \eqref{eq:loglogBd} and
%\ali{
$\fr{1}{2} \log (e_v/e_R)_{(k+1)} = \fr{1}{2} (\log g_{(k)} + \log \log \hxi_{(k)})$.
\end{proof}
\begin{proof}[Proof of \eqref{eq:freqChangeRate}-\eqref{eq:loghxiasymp}]  For $k \geq 3$, we have by \eqref{eq:everkAsymp} and \eqref{eq:loglogBd} (for $A = 5/2$)
\ALI{
\de_{(k)} \log \hxi_{(k)} &= \fr{1}{2} \log (e_v/e_R)_{(k+1)} + \log g_{(k)} + A \log \log \hxi_{(k)} \\
&= \fr{3\ga}{2} k \log k + O(\log k) = \fr{3\ga}{2} \de_{(k)}\left[\fr{k^2}{2} \log k \right] + O(\de_{(k)}[k \log k]),
}
which implies both \eqref{eq:freqChangeRate} and \eqref{eq:loghxiasymp} after summing over $k$.  %Equation \eqref{eq:loglogasymp} follows from \eqref{eq:freqChangeRate}.
\end{proof}
\begin{proof}[Proof of \eqref{eq:loglogasymp}] Again writing $L_k = \log \hxi_{(k)}$ and $Z_k = \hc(\log \hxi_{(k)})^{A +1/2}$, we have by Taylor expansion
\ALI{
\de_{(k)} \log \log \hxi_{(k)} &= \log(L_k + \log(Z_k g_{(k)}^{3/2}) ) - \log L_k \\
&= (\log \hxi_{(k)})^{-1} \log(Z_k g_{(k)}^{3/2}) - \int_0^1 d\si \fr{(\log(Z_k g_{(k)}^{3/2}))^2(1-\si)}{\left(L_k + \si \log(Z_k g_{(k)}^{3/2}) \right)^2}.  
}
The main term is $(\log \hxi_{(k)})^{-1} \log g_{(k)}^{3/2} = 2 k^{-1} + O(k^{-2}) = 2\de_{(k)} \log k + O(k^{-2})$ by \eqref{eq:loghxiasymp}.  The remaining terms are of size $O(k^{-2})$ by \eqref{eq:loglogBd} and \eqref{eq:loghxiasymp} again.  Summing over $k$ gives \eqref{eq:loglogasymp}.
\end{proof}

\begin{proof}[Proof of \eqref{eq:loghxierkasmp}]  Equation \eqref{eq:loghxierkasmp} follows from \eqref{eq:keyEvolRule}, \eqref{eq:loglogasymp} and summation over $k$.
\end{proof}

We now return to analyzing the regularity estimate \eqref{eq:wantToMin}.  From \eqref{eq:loghxiasymp}, \eqref{eq:freqChangeRate}, \eqref{eq:loghxierkasmp}, and by the definitions of $\bk$ and $\hxi_{\brk}$, we obtain (using \eqref{eq:taylorExpand} with $f(X) = X^{-1}$ or $\log X$) that for all $|\De x| \leq 10^{-2}$
\ali{
\bk^2 \log \bk \lsm  \log \hxi_{\brk} &\leq \log |\De x|^{-1} \leq \log \hxi_{(\bk + 1)} \lsm \bk^2 \log \bk \notag \\
\fr{3\ga}{4} \bk^2 \log \bk &= \log |\De x|^{-1} + O(\bk \log \bk) \label{eq:kbarlogdxrel} \\%= \log |\De x|^{-1} + O(\sqrt{ \log |\De x|^{-1} \cdot \log \log |\De x|^{-1} }) \\
(\log |\De x|^{-1})^{-1} &= \left(\fr{4}{3 \ga} + O(\bk^{-1})\right) \bk^{-2} (\log \bk)^{-1} \notag \\
\log(\bk^2) &= \log \log |\De x|^{-1} + O(\log \log \bk). \label{eq:logkloglogdex} %\\
%(\log \bk)^{1/2} &= 2^{-1/2} (\log \log |\De x|^{-1})^{1/2} + O\left(\fr{\log \log \bk}{(\log |\De x|^{-1})^{1/2}} \right)
}
To bound \eqref{eq:wantToMin} purely in terms of $|\De x|$, we first estimate the logarithm of the term $L_{\bk} \mbox{exp}(H_{\bk})$ appearing in \eqref{eq:wantToMin}-\eqref{eq:balanceThese} (using $A = 5/2$ and $\fr{A}{3} + \fr{1}{6} = 1$) by
\ali{
(\log |\De x|^{-1})^{-1} \cdot (H_{\bk} + \log L_{\bk}) &= \left(\fr{4}{3\ga} \left( 1 + O(\bk^{-1})\right) (\bk^2 \log \bk)^{-1} \right) \cdot \left(\left(\fr{\ga}{2} + 2 \right) \bk \log \bk + O(\bk) \right) \notag
\\
&= \fr{4}{3\ga}\left(\fr{\ga}{2} + 2\right) \bk^{-1}  + O(\bk^{-1}(\log \bk)^{-1}) \notag \\
&= \fr{4}{3\ga}\left(\fr{\ga}{2} + 2\right) (\bk^2 \log \bk)^{-1/2} (\log \bk)^{1/2} + O(\bk^{-1}(\log \bk)^{-1}) \notag \\
%&\stackrel{\eqref{eq:logkloglogdex}}{=} 2^{-1/2}\left(\fr{4}{3\ga}\right)\left(\fr{\ga}{2} + 2\right) (\bk^2 \log \bk)^{-1/2}(\log \log |\De x|^{-1})^{1/2} \notag \\
&= 2^{-1/2}\left(\fr{4}{3\ga}\right)\left(\fr{\ga}{2} + 2\right) (\bk^2 \log \bk)^{-1/2}(\log \log |\De x|^{-1})^{1/2} \notag \\
&+ O\left(\fr{\log \log \bk}{(\bk^2 \log \bk)^{1/2}(\log \log |\De x|^{-1})^{1/2}} \right). \notag
}
In the last line we used \eqref{eq:logkloglogdex} and \eqref{eq:taylorExpand} with $f(X) = X^{1/2}$.  From \eqref{eq:kbarlogdxrel} and \eqref{eq:taylorExpand} we then have %and \eqref{eq:taylorExpand} with $f(X) = X^{-1/2}$, we obtain
\ali{
(\log |\De x|^{-1})^{-1} \cdot (H_{\bk} + \log L_{\bk}) &= 2^{-1/2} \left( \fr{4}{3 \ga} \right)^{1/2} \left(\fr{\ga}{2} + 2\right) (\log |\De x|^{-1})^{-1/2} (\log \log |\De x|^{-1})^{1/2} \label{eq:timeToOptimize}\\
&+ O\left(\fr{\log \log \log |\De x|^{-1}}{(\log |\De x|^{-1})^{1/2}(\log \log |\De x|^{-1})^{1/2}} \right). \label{eq:messyLowerOrder}
} 
The bound \eqref{eq:timeToOptimize} is optimized by taking $\ga = 4$, which is precisely the value that leads to the asymptotic $\log \hxi_{(k)} = (3 + o(1)) k^2 \log k$ predicted by the heuristics at the conclusion of Section~\ref{sec:heuristics}.  Substituting into \eqref{eq:wantToMin}, we finally obtain
\ali{
|v(t,x + \De x) - v(t,x)| &\lsm |\De x|^{1/3 - B \sqrt{\fr{\log \log |\De x|^{-1}}{\log |\De x|^{-1} } } }, \label{eq:finalEndpointReg}
}
where one can take the constant $B = 2\sqrt{2/3}$ at the expense of introducing the additional lower order term\footnote{The derivation of equation \eqref{eq:balanceThese} suggets that taking $g_{(k)} = (\sum_{I=1}^k (\log \log \hxi_{(I)} + \log \hc)) + (\log \log \hxi_{k}/2)$ would optimize the lower order terms as well, although this alternative choice would not affect the leading order terms. } from \eqref{eq:messyLowerOrder}.  In particular, $v$ belongs to $v \in \bigcap_{\a < 1/3} L_t^\infty C_x^\a$, and therefore belongs to $v \in \bigcap_{\a < 1/3} C_{t,x}^\a$ by the results in \cite{isett2}.  To check that $v$ has compact support in time, note that the time support in each iteration grows by at most a factor 
\ALI{
\Xi^{-1}_{(k)} e_{v,(k)}^{-1/2} = \hxi_{(k)}^{-1} e_{R,(k)}^{-1/2} = \hxi_{(k)}^{-2/3} \mbox{exp}\left(- \fr{1}{3} \log \hxi_{(k)} - \fr{1}{2} \log e_{R,(k)}\right).
}
Using \eqref{eq:loghxiasymp} and \eqref{eq:loghxierkasmp}, we conclude that the series $\sum_k \Xi_{(k)}^{-1} e_{v,(k)}^{-1/2}$ converges, and hence the limiting solution is supported on a finite time interval.  This calculation concludes the proof of Theorem~\ref{thm:mainThm}.

\section{Improving the Borderline Estimate} \label{sec:improveBorderline}
In this section, we sketch roughly how the value of the $B$ appearing in the regularity estimate \eqref{eq:finalEndpointReg} can be improved by combining with the approach to the Gluing Lemma introduced in \cite{BDLSVonsag}.  

Recall that, in the notation of \cite{isettOnsag}, the Gluing Lemma is proved by introducing for a given Euler-Reynolds flow $(v, p, R)$ corrections $y^\ell = \sum_I \eta_I y_I^\ell$ and $\bp = \sum_I \eta_I \bp_I$ to the velocity and pressure
such that the new velocity field $\tilde{v}^\ell = v^\ell + y^\ell$ and pressure $\tilde{p} = p + \bp$ solve the Euler-Reynolds system with a new Reynolds stress $\wtld{R}$ that is supported in disjoint time intervals of width $\th \sim \plhxi^{-2} \Xi^{-1} e_v^{-1/2}$.  The new stress $\wtld{R}$ is constructed in terms of symmetric tensors $r_I^{j\ell}$ that solve $\nb_j r_I^{j\ell} = y_I^\ell$, which are obtained by solving the following initial value problem\footnote{Here we have simplified the equations by combining the equations for the $\rho_I^{j\ell}$ and $z_I^{j\ell}$ from \cite{isettOnsag} into one equation.}:
\ali{
\begin{split}
(\pr_t + v^i \nb_i) r_I^{j\ell} &=  \RR^{j\ell}[\nb_i [\nb_ av^i r_I^{ab}] - y_I^i \nb_i v^b] - y_I^j y_I^\ell - \bp_I \de^{j\ell} - R^{j\ell} \label{eq:eqnForrI} \\
r_I^{j\ell}(t(I), x) &= 0.
\end{split}
}
Here $\RR^{j\ell}$ is an order $-1$ operator that inverts the divergence equation in symmetric tensors, and the identity $\nb_j r_I^{j\ell} = y_I^\ell$ can be checked using the equation
\ali{
\begin{split}
\pr_t y_I^\ell + v^i \nb_i y_I^\ell + y_I^i \nb_i v^\ell + \nb_j (y_I^j y_I^\ell) + \nb^\ell \bp_I &= - \nb_j R^{j\ell} \\
\pr_t y_I^\ell + v^i \nb_i y_I^\ell + y_I^i \nb_i u_I^\ell + \nb^\ell \bp_I &= - \nb_j R^{j\ell}, \label{eq:yIeqn}
\end{split}
}
where $u_I^\ell = v^\ell + y_I^\ell$ is the classical solution to incompressible Euler with initial data $v^\ell(t_0(I),x)$.% at the initial time $t(I)$.
%\ali{
%\pr_t y_I^\ell + v^i \nb_i y_I^\ell + y_I^i \nb_i v^\ell + y_I^i \nb_i y_I^\ell + \nb^\ell \bp_I &= - \nb_j R^{j\ell} 
%}

In \cite{BDLSVonsag}, a different approach to solving and estimating solutions of the equation $\nb_j r_I^{j\ell} = y_I^\ell$ is taken.  There, one first considers the potential $\tilde{z}_I = \De^{-1} \nb \times y_I$, which solves $\nb \times \tilde{z}_I = y_I$, $\mbox{ div } \tilde{z}_I = 0$ and turns out to satisfy an evolution equation that (like \eqref{eq:eqnForrI}) has a good structure.  From $\tilde{z}_I$, one then obtains a symmetric anti-divergence for $y_I$ by applying a zeroth order operator (e.g. $r_I^{j\ell} = \RR^{j\ell}[\nb \times \tilde{z}_I$]), which is estimated using Schauder and commutator estimates for CZOs.  (We note that, conversely, estimates for $\tilde{z}_I$ can be deduced from those of $r_I^{j\ell}$ above by similar zeroth order commutator estimates.)  The key simplification comes in treating the term $\De^{-1} \nb \times[ y_I \cdot \nb v ]$ that is analogous to the term $\RR^{j\ell}[y_I \cdot \nb v]$ in \eqref{eq:eqnForrI}, the latter of which had been treated by a decomposition into frequency increments in \cite{isettOnsag}.  For the present applications, the estimates employed in \cite{BDLSVonsag}, which apply the classical local well-posedness theory for Euler and Schauder and commutator estimates for CZOs, are not strong enough as they lose small powers of the frequency $\Xi$, which restricts the regularity to $1/3 - \ep$ for some $\ep > 0$.  
%The estimates proved in \cite{BDLSVonsag} involve losses of powers of the frequency $\Xi$, which are not strong enough for our applications.  
However, as we now explain, combining the techniques in \cite{BDLSVonsag} and \cite{isettOnsag} leads to a logarithmic improvement in the timescale of the gluing and hence a logarithmic improvement in the main estimate of the iteration.

The approach of \cite{BDLSVonsag} can be extended to any dimension using the antisymmetric potential\footnote{We write $\psi^{ab}$ to agree with the usual stream function $\psi$ in dimension $2$, which is related by $\psi^{ab} = \psi \, \ep^{ab}$, where the two-dimensional volume element $\ep^{ab}$ is the unique anti-symmetric tensor with $\ep^{12} = 1$.} defined by $\psi_I^{ab} = \BB^{ab}[y_I] := \De^{-1}(\nb^a y_I^b - \nb^b y_I^a)$, which solves the following Hodge system\footnote{We caution the reader that our normalizations for wedge products are taken to elucidate the present calculations, but do not agree with all standard normalizations, which can differ up to multiplication by constants.}
\ali{
\nb_a \psi^{ab} = y_I^b, \qquad (\nb \wed \psi)^{abc} := \nb^a \psi^{bc} - \nb^b \psi^{ac} + \nb^c \psi^{ab} = 0, \qquad \int_{\T^3} \psi(x) dx = 0. \label{eq:hodgeSystem}
}
Using the anti-symmetry of $\psi_I^{ab}$, one obtains the following identity
\ali{
y_I^i \nb_i v^\ell &= \nb_a [ \psi_I^{ai} \nb_i v^\ell ]. \label{eq:identityWithOm}
}
Using \eqref{eq:identityWithOm} and $\psi_I^{ab} = \BB^{ab}[y_I^j] = \BB^{ab} \nb_i[r_I^{ij}]$ can provide an alternative approach to treating the low frequency part of the term $\RR^{j\ell}[y_I^i \nb_i v^b]$ in \eqref{eq:eqnForrI} and the analogous term in the pressure.

Towards improving the timescale of the gluing, apply \eqref{eq:yIeqn} along with the following calculus identity (which we express in both index and invariant notation)
\ali{
\begin{split}
\De \psi^{ab} &= (\nb \wed [\nb \neg \psi])^{ab} + (\nb \neg [ \nb \wed \psi])^{ab} \\
\nb_i[\nb^i \psi^{ab}] &= \left(\nb^a [ \nb_j \psi^{jb} ] - \nb^b[ \nb_j \psi^{ja} ]\right) + \nb_i [ \nb^i \psi^{ab} + \nb^b \psi^{ia} + \nb^a \psi^{bi}  ] 
\end{split}
}
to derive the following equation for the potential $\psi_I^{ab}$, generalizing\footnote{A slight departure from \cite{BDLSVonsag} is the isolation of quadratic terms of the form $y_I^j y_I^\ell$, which would be estimated jointly in $y_I^i \nb_i v^\ell + \nb_i(y_I^i y_I^\ell) = y_I^i \nb_i u_I^\ell$ in the approach of \cite{BDLSVonsag}.  The $y_I^jy_I^\ell$ terms are kept separate here in order to avoid a resulting additional derivative loss in the estimates. } \cite[Section 3.3]{BDLSVonsag}:
\ali{
\begin{split} \label{eq:omjkeveqn}
\De[(\pr_t + v^i \nb_i) \psi_I^{jk}] =& \nb_a \nb_i[ (\psi_I^{jk} \wed \nb^{a}) v^i ]  - \nb^j \wed[\nb_a( \psi_I^{ai} \nb_i v^k) + \nb_a[ y_I^a y_I^k] + \nb_i R^{ik}  ] \\
&+ \nb^j \wed[\nb_i[ \nb_a v^i \psi_I^{ak}] ], \end{split}
\\
\psi_I^{jk} \wed \nb^a v^i &:= \psi_I^{jk} \nb^a v^i - \psi_I^{ak} \nb^j v^i + \psi_I^{aj} \nb^k v^i. \notag
}
This derivation relies on equation \eqref{eq:hodgeSystem} and $\nb^j \nb^k \bp_I - \nb^k \nb^j \bp_I = 0$, and uses that $\nb_i v^i = 0$ to maintain the divergence form.  The convention above for $\nb^j \wed$ applied to a vector field is $\nb^j \wed u^k := \nb^j u^k - \nb^k u^j$, while $(\psi_I^{jk} \wed \nb^{a}) v^i$ indicates a sum over cyclic permutations of $j k a$ in $\psi_I^{jk} \nb^a v^i$. %, applying the identity \eqref{eq:identityWithOm}, and using $$ 

One may now couple equation \eqref{eq:omjkeveqn} to equation \eqref{eq:eqnForrI} while writing $\RR^{j\ell}[ y_I^i \nb_i v^\ell] = \RR^{j\ell}\nb_a[ \psi_I^{ai} \nb_i v^\ell]$ and similarly for the analogous term $\De^{-1} \nb_\ell[ y_I^i \nb_i v^\ell]$ appearing in the pressure $\bp_I$.  By considering a weighted norm $\hh(t) = \hh_I(t)$ such that (setting $\nhat := (e_v/e_R)^{1/2}$ and, for instance, $\a = 1/7$) %one has
\ali{
\begin{split} \label{eq:familyOfBounds}
\co{ \nb_{\va} r_I} + \co{ \nb_{\va} \psi_I } + \hxi^{-\a}(\cda{\nb_{\va}r_I} + \cda{\nb_{\va}&\psi_I}) \leq \nhat^{(|\va| - 2)_+} \Xi^{|\va|} (\Xi e_v^{1/2})^{-1} e_R \hh(t) \\
\co{ \nb_{\va} y_I} + \hxi^{-\a}\cda{y_I} &\leq \nhat^{(|\va| - 2)_+} \Xi^{|\va|} e_R^{1/2} \hh(t), \quad \mbox{ for } 0 \leq |\va| \leq 3%, \qquad \nhat := (e_v/e_R)^{1/2}.
\end{split}
}
and following the Littlewood-Paley approach to the gluing estimates in \cite{isettOnsag}, we obtain the bound
\ali{
\hh(t) &\lsm \plhxi \Xi e_v^{1/2} \int_0^t ( 1 + \hh(\tau))^2 d\tau. \label{eq:nonGronwall}
}
%which improves the timescale for the analogous bound in \cite{isett} by a factor of $(\log \hxi)^{-1}$.  %(The appearance of $\plhxi$ above here comes from bounds for low-frequency restrictions of zero-order operators.)
The prefactor in \eqref{eq:nonGronwall} improves the analogous prefactor in \cite[Proposition 10.1]{isettOnsag} by a factor of $(\log \hxi)^{-1}$, which thus improves the timescale $\th$ by a logarithmic factor to $\th \sim (\log \hxi)^{-1} (\Xi e_v^{1/2})^{-1}$.  What this improvement in timescale yields is that the time cutoff factors of $\eta_I'$ in the terms of the form $\sim \eta_I' r_I^{j\ell}$ that compose the new stress error $\wtld{R}$ have become smaller by a factor $(\log \hxi)^{-1}$ in size, while the anti-divergence terms $r_I^{j\ell}$ have increased in size by a factor of $(\log \hxi)$ over the elongated time scale.% in comparison to \cite{isettOnsag}. %, also improved by a factor of $(\log \hxi)$.  

Although the estimate $\co{\wtld{R}} \lsm (\log \hxi) e_R$ on the stress does not improve, the estimate on the advective derivative improves logarithmically to $\co{D_t \wtld{R} } \lsm (\log \hxi)^2 \Xi e_v^{1/2} e_R$.  %, although the estimate for $\co{\wtld{R}} \lsm (\log \hxi) e_R$ remains of the same magnitude.  
The bound \eqref{eq:newFrEnLvls} for the new frequency-energy levels in the Main Lemma similarly improves by one power of $\log \hxi$ to become:% become:
%The estimate on the advective derivative of the new stress $\wtld{R}$ similarly improves to $\co{D_t \wtld{R} } \lsm (\log \hxi)^2 \Xi e_v^{1/2} e_R$, although the estimate for $\co{\wtld{R}} \lsm (\log \hxi) e_R$ remains of the same magnitude.  The bound \eqref{eq:newFrEnLvls} for the new frequency-energy levels in the Main Lemma in turn improves by one power of $\log \hxi$ to become:
\ali{
(\Xi', e_v', e_R') &= \left(\tilde{C} N \Xi, \plhxi e_R, \plhxi^{A} \fr{e_v^{1/2} e_R^{1/2}}{N} \right), \qquad A = 3/2. \label{eq:improvedLogPower}
}
(One can alternatively pursue an approach closer to \cite{BDLSVonsag} wherein the equation for $\psi_I^{ab}$ is coupled to the evolution equation\footnote{Using the evolution equation for the symmetric anti-divergence is important to avoid an additional logarithmic loss that would be incurred from attempting to deduce estimates for $\tilde{r}_I$ directly from those for $\psi_I$.} for a different, symmetric anti-divergence such as $\tilde{r}_I^{j\ell} := \De^{-1}(\nb^j y_I^\ell + \nb^\ell y_I^j)$.  %the bounds \eqref{eq:familyOfBounds}-\eqref{eq:nonGronwall}.  %and requiring the bounds \eqref{eq:familyOfBounds}-\eqref{eq:nonGronwall}.  
Implementing this alternative approach requires additional, sharper commutator estimates.)

%Alternatively, closer to \cite{BDLSVonsag}, one can couple the equation \eqref{eq:omjkeveqn} to the evolution equation for $\tilde{r}_I^{j\ell} = \De^{-1}(\nb^j y_I^\ell + \nb^\ell y_I^j)$, a different symmetric anti-divergence for $y_I^\ell$.  Substituting $y_I^\ell = \nb_a\psi_I^{a\ell}$, one sees that the tensor $\tilde{r}_I^{j\ell}$ is related to $\psi_I^{ab}$ by applying the zeroth order operator $\tilde{r}_I^{j\ell} = T^{j\ell}[ \psi_I^{ab} ]$.  If one couples the evolution equation for $\tilde{r}_I^{j\ell}$ to \eqref{eq:omjkeveqn} by applying the operator $T^{j\ell}$ to equation \eqref{eq:omjkeveqn}, one can obtain the estimate \eqref{eq:nonGronwall} provided one can bound the commutator $[ (\pr_t + v \cdot \nb), T](\psi_I)$ with only a logarithmic loss of $|[ (\pr_t + v \cdot \nb), T]| \lsm (\log \hxi) \Xi e_v^{1/2} $ in the estimates.  The required commutator estimates are stronger than the commutator estimates for CZOs of \cite{constLagEuUnq} applied in \cite{BDLSVonsag}, but they can be proven for example using Littlewood-Paley analysis.  (We remark that using the evolution equation for $\tilde{r}_I^{j\ell}$ is important for the improved bound, as estimating $\tilde{r}_I^{j\ell}$ directly from $\psi_I^{ab}$ by applying $T^{j\ell}[\cdot]$ incurs a loss of an additional power of $\log \hxi$.)

The improvement in the power $A = 3/2$ of $\log \hxi$ in \eqref{eq:improvedLogPower} then leads to an improvement in the constant $B$ in the leading order term of the regularity estimate \eqref{eq:finalEndpointReg}.  Namely, repeating the analysis of Section \ref{sec:iterateMainLem} but with $A = 3/2$ instead of $5/2$ improves the leading order term in \eqref{eq:loghxierkasmp}, which leads to a factor of $\left(\fr{\ga}{2} + \fr{4}{3} \right)$ in \eqref{eq:timeToOptimize} in place of $\left( \fr{\ga}{2} + 2 \right)$.  After choosing $\ga = 8/3$ to optimize \eqref{eq:timeToOptimize}, one obtains a leading order constant of $B = 4/3 = 2(2/3)$ instead of $B = 2 \sqrt{2/3}$.  Note that, with the improved constant, the function space implicitly defined by the estimate \eqref{eq:finalEndpointReg} is strictly contained in the one with the larger value of $B$, and the corresponding norms are not comparable to each other.

% norm on the function space implicitly defined by \eqref{eq:finalEndpointReg} is not equivalent to the one with the larger value of $B = 4/3$ and that the latter function space strictly contains the former.

%\ali{

%}

%This observation gives an alternative approach to the estimates of \cite{isett}, as one can estimate the low frequency part of the term involving linearized Euler by writing 
%\ali{
%\RR^{j\ell}[y_I^i \nb_i v^b] = \RR^{j\ell}\nb_a[\om_I^{ai} \nb_i v^b] = \RR^{j\ell} \nb_a[ ]
%}

% $\nb_a \om^{ab} = y_I^b$ and $(\nb \wed \om)^{abc} = \nb^a \om^{bc} - \nb^b \om^{ca} + \nb^c \om^{ba} = 0 $.  

\bibliographystyle{alpha}
\bibliography{eulerOnRn}

\begin{thebibliography}{BDLSV17}

\bibitem[BDLIS15]{buckDeLIsettSze}
Tristan Buckmaster, Camillo De~Lellis, Philip Isett, and L{\'a}szl{\'o}
  Sz{\'e}kelyhidi, Jr.
\newblock Anomalous dissipation for {$1/5$}-{H}\"older {E}uler flows.
\newblock {\em Ann. of Math. (2)}, 182(1):127--172, 2015.

\bibitem[BDLS16]{buckDeLSzeOnsCrit}
Tristan Buckmaster, Camillo De~Lellis, and L\'aszl\'o Sz\'ekelyhidi, Jr.
\newblock Dissipative {E}uler flows with {O}nsager-critical spatial regularity.
\newblock {\em Comm. Pure Appl. Math.}, 69(9):1613--1670, 2016.

\bibitem[BDLSV17]{BDLSVonsag}
Tristan Buckmaster, Camillo De~Lellis, L{\'a}szl{\'o} Sz{\'e}kelyhidi, Jr., and
  Vlad Vicol.
\newblock Onsager's conjecture for admissible weak solutions. {P}reprint.
\newblock 2017.

\bibitem[BSV16]{buckShkVicSQG}
Tristan Buckmaster, Steve Shkoller, and Vlad Vicol.
\newblock Nonuniqueness of weak solutions to the {SQG} equation. {P}reprint.
\newblock {\em arXiv preprint arXiv:1610.00676}, 2016.

\bibitem[BT10]{bardTiti}
C.~Bardos and E.~Titi.
\newblock Loss of smoothness and energy conserving rough weak solutions for the
  {$3d$} {E}uler equations.
\newblock {\em Discrete Contin. Dyn. Syst. Ser. S}, 3(2):185--197, 2010.

\bibitem[BT18]{bardTitiBdd}
Claude Bardos and Edriss~S. Titi.
\newblock Onsager's conjecture for the incompressible {E}uler equations in
  bounded domains.
\newblock {\em Arch. Ration. Mech. Anal.}, 228(1):197--207, 2018.

\bibitem[BTW]{bardTitiWied}
C~Bardos, E~Titi, and E~Wiedemann.
\newblock Onsager’s conjecture with physical boundaries and an application to
  the viscosity limit. {P}reprint.

\bibitem[CCFS08]{ches}
A.~Cheskidov, P.~Constantin, S.~Friedlander, and R.~Shvydkoy.
\newblock Energy conservation and {O}nsager's conjecture for the {E}uler
  equations.
\newblock {\em Nonlinearity}, 21(6):1233--1252, 2008.

\bibitem[CDLS10]{deLSzeC1iso}
S.~Conti, C.~De~Lellis, and L.~Sz{\'e}kelyhidi, Jr.
\newblock h-principle and rigidity for ${C}^{1,\alpha}$ isometric embeddings.
  {T}o appear in.
\newblock {\em Proceedings of the Abel Symposium}, 2010.

\bibitem[CET94]{CET}
Peter Constantin, Weinan E, and Edriss~S. Titi.
\newblock Onsager's conjecture on the energy conservation for solutions of
  {E}uler's equation.
\newblock {\em Comm. Math. Phys.}, 165(1):207--209, 1994.

\bibitem[CS10]{chesShvIllp}
A.~Cheskidov and R.~Shvydkoy.
\newblock Ill-posedness of the basic equations of fluid dynamics in {B}esov
  spaces.
\newblock {\em Proc. Amer. Math. Soc.}, 138(3):1059--1067, 2010.

\bibitem[CS14]{chesShv}
A.~Cheskidov and R.~Shvydkoy.
\newblock Euler equations and turbulence: analytical approach to intermittency.
\newblock {\em SIAM J. Math. Anal.}, 46(1):353--374, 2014.

\bibitem[DLISJ15]{deLInSze}
Camillo De~Lellis, Dominik Inauen, and L{\'a}szl{\'o} Sz{\'e}kelyhidi~Jr.
\newblock A {N}ash-{K}uiper theorem for {$C^{1/5 - \delta}$} immersions of
  surfaces in $3 $ dimensions.
\newblock {\em arXiv preprint arXiv:1510.01934}, 2015.

\bibitem[DLS09]{deLSzeIncl}
Camillo De~Lellis and L{\'a}szl{\'o} Sz{\'e}kelyhidi, Jr.
\newblock The {E}uler equations as a differential inclusion.
\newblock {\em Ann. of Math. (2)}, 170(3):1417--1436, 2009.

\bibitem[DLS13a]{deLSzeOnsagSurv}
Camillo De~Lellis and L{\'a}szl{\'o} Sz{\'e}kelyhidi.
\newblock Continuous dissipative {E}uler flows and a conjecture of {O}nsager.
\newblock In {\em European {C}ongress of {M}athematics}, pages 13--29. Eur.
  Math. Soc., Z\"urich, 2013.

\bibitem[DLS13b]{deLSzeCts}
Camillo De~Lellis and L{\'a}szl{\'o} Sz{\'e}kelyhidi.
\newblock Dissipative continuous {E}uler flows.
\newblock {\em Invent. Math.}, 193(2):377--407, 2013.

\bibitem[DLS14]{deLSzeHoldCts}
Camillo De~Lellis and L{\'a}szl{\'o} Sz{\'e}kelyhidi, Jr.
\newblock Dissipative {E}uler flows and {O}nsager's conjecture.
\newblock {\em J. Eur. Math. Soc. (JEMS)}, 16(7):1467--1505, 2014.

\bibitem[DN18]{drivasNguyen2018onsager}
Theodore~D Drivas and Huy Nguyen.
\newblock Onsager's conjecture and anomalous dissipation on domains with
  boundary.
\newblock {\em arXiv preprint arXiv:1803.05416}, 2018.

\bibitem[DR00]{duchonRobert}
Jean Duchon and Raoul Robert.
\newblock Inertial energy dissipation for weak solutions of incompressible
  {E}uler and {N}avier-{S}tokes equations.
\newblock {\em Nonlinearity}, 13(1):249--255, 2000.

\bibitem[DS16]{danSze}
S.~Daneri and L.~Sz{\'e}kelyhidi, Jr.
\newblock Non-uniqueness and h-principle for {H}{\" o}lder-continuous weak
  solutions of the {E}uler equations. {P}reprint.
\newblock 2016.

\bibitem[ES06]{eyinkSreen}
{G. L.} Eyink and {K. R.} Sreenivasan.
\newblock Onsager and the theory of hydrodynamic turbulence.
\newblock {\em Reviews of Modern Physics}, 78, 2006.

\bibitem[Eyi94]{eyink}
Gregory~L. Eyink.
\newblock Energy dissipation without viscosity in ideal hydrodynamics. {I}.
  {F}ourier analysis and local energy transfer.
\newblock {\em Phys. D}, 78(3-4):222--240, 1994.

\bibitem[Fri91]{frisch1991global}
U~Frisch.
\newblock From global scaling, a la kolmogorov, to local multifractal scaling
  in fully developed turbulence.
\newblock {\em Proceedings of the Royal Society of London. Series A:
  Mathematical and Physical Sciences}, 434(1890):89--99, 1991.

\bibitem[IO13]{isettOh}
P.~Isett and S.-J. Oh.
\newblock A heat flow approach to {O}nsager's conjecture for the {E}uler
  equations on manifolds. {T}o appear in.
\newblock {\em Transactions of the AMS}, 2013.

\bibitem[IO15]{IO-presEn}
P.~Isett and S.-J. Oh.
\newblock On the {K}inetic {E}nergy profile of {H}\"older continuous {E}uler
  flows. {P}reprint.
\newblock 2015.

\bibitem[IO16a]{IOheat}
Philip Isett and Sung-Jin Oh.
\newblock A heat flow approach to {O}nsager's conjecture for the {E}uler
  equations on manifolds.
\newblock {\em Trans. Amer. Math. Soc.}, 368(9):6519--6537, 2016.

\bibitem[IO16b]{IOnonpd}
Philip Isett and Sung-Jin Oh.
\newblock On nonperiodic {E}uler flows with {H}\"older regularity.
\newblock {\em Arch. Ration. Mech. Anal.}, 221(2):725--804, 2016.

\bibitem[Ise12]{isett}
P.~Isett.
\newblock H{\"{o}}lder continuous {E}uler flows in three dimensions with
  compact support in time. {P}reprint.
\newblock 2012.

\bibitem[Ise13]{isett2}
P.~Isett.
\newblock Regularity in time along the coarse scale flow for the {E}uler
  equations. {P}reprint.
\newblock 2013.

\bibitem[Ise16]{isettOnsag}
P.~Isett.
\newblock A proof of {O}nsager's conjecture. {P}reprint.
\newblock 2016.

\bibitem[Ise17]{isettAOMS}
Philip Isett.
\newblock {\em H\"older continuous {E}uler flows in three dimensions with
  compact support in time}, volume 196 of {\em Annals of Mathematics Studies}.
\newblock Princeton University Press, Princeton, NJ, 2017.

\bibitem[IV15]{isettVicol}
Philip Isett and Vlad Vicol.
\newblock H\"older continuous solutions of active scalar equations.
\newblock {\em Ann. PDE}, 1(1):Art. 2, 77, 2015.

\bibitem[Kol41]{K41}
A.~N. Kolmogorov.
\newblock The local structure of turbulence in an incompressible viscous fluid.
\newblock {\em C. R. (Doklady) Acad. Sci. URSS (N.S.)}, 30:301--305, 1941.

\bibitem[LP15]{lewPak2015convex}
Marta Lewicka and Mohammad~Reza Pakzad.
\newblock Convex integration for the monge-amp{\`e}re equation in two
  dimensions.
\newblock {\em arXiv preprint arXiv:1508.01362}, 2015.

\bibitem[LS15]{luoShv2dHomog}
Xue Luo and Roman Shvydkoy.
\newblock 2{D} homogeneous solutions to the {E}uler equation.
\newblock {\em Comm. Partial Differential Equations}, 40(9):1666--1687, 2015.

\bibitem[LS17]{luoShv2DhomogAdd}
Xue Luo and Roman Shvydkoy.
\newblock Addendum: 2{D} homogeneous solutions to the {E}uler equation [
  {MR}3359160].
\newblock {\em Comm. Partial Differential Equations}, 42(3):491--493, 2017.

\bibitem[MS87]{meneveau1987multifractal}
Charles Meneveau and Katepalli~R Sreenivasan.
\newblock The multifractal spectrum of the dissipation field in turbulent
  flows.
\newblock {\em Nuclear Physics B-Proceedings Supplements}, 2:49--76, 1987.

\bibitem[MS91]{meneveau1991multifractal}
Charles Meneveau and KR~Sreenivasan.
\newblock The multifractal nature of turbulent energy dissipation.
\newblock {\em Journal of Fluid Mechanics}, 224:429--484, 1991.

\bibitem[MSKF90]{meneveau1990joint}
Charles Meneveau, KR~Sreenivasan, P~Kailasnath, and MS~Fan.
\newblock Joint multifractal measures: Theory and applications to turbulence.
\newblock {\em Physical Review A}, 41(2):894, 1990.

\bibitem[Ons49]{onsag}
L.~Onsager.
\newblock Statistical hydrodynamics.
\newblock {\em Nuovo Cimento (9)}, 6 Supplemento(2 (Convegno Internazionale di
  Meccanica Statistica)):279--287, 1949.

\bibitem[Shv18]{shvHomog}
Roman Shvydkoy.
\newblock Homogeneous solutions to the 3{D} {E}uler system.
\newblock {\em Trans. Amer. Math. Soc.}, 370(4):2517--2535, 2018.

\bibitem[Wan03]{wangCZloc}
Li~He Wang.
\newblock A geometric approach to the {C}alder\'on-{Z}ygmund estimates.
\newblock {\em Acta Math. Sin. (Engl. Ser.)}, 19(2):381--396, 2003.

\end{thebibliography}

\end{document}